\theoremstyle{thmstyleone}%
\newtheorem{theorem}{Theorem}
\theoremstyle{thmstyletwo}%
\newtheorem{example}{Example}%
\newtheorem{assumption}{Assumption}
\newtheorem{lemma}{Lemma}
\theoremstyle{thmstylethree}%
\newcommand{\dd}{\,{\rm d}}
\newcommand{\dist}{\,{\rm dist}}
\newcommand{\divo}{\,{\rm div}}
\begin{document}

\title[Article Title]{Optimal error estimates of the diffuse domain method for second order parabolic equations}


\author[1]{\fnm{Wenrui} \sur{Hao}}\email{wxh64@psu.edu}
\equalcont{These authors contributed equally to this work.}

\author[2]{\fnm{Lili} \sur{Ju}}\email{ju@math.sc.edu}
\equalcont{These authors contributed equally to this work.}

\author*[1]{\fnm{Yuejin} \sur{Xu}}\email{ymx5204@psu.edu}

\affil*[1]{\orgdiv{Department of mathematics}, \orgname{Pennsylvania State University}, \orgaddress{\city{College Park}, \postcode{16802}, \state{PA}, \country{USA}}}

\affil[2]{\orgdiv{Department of Mathematics}, \orgname{University of South Carolina}, \orgaddress{\city{Columbia}, \postcode{29208}, \state{Columbia}, \country{USA}}}


\abstract{In this paper, we study the convergence behavior of the diffuse domain method (DDM) for solving a class of second-order parabolic partial differential equations with Neumann boundary condition posed on general irregular domains. The DDM employs a phase-field function to extend the original parabolic problem to a similar but slightly modified problem defined over a larger rectangular domain that contains the target physical domain. Based on the weighted Sobolev spaces, we rigorously establish the convergence of the diffuse domain solution to the original solution as the interface thickness parameter goes to zero, together with the corresponding optimal error estimates under the weighted $L^2$ and $H^1$ norms. Numerical experiments are also presented to validate the theoretical results.}

\keywords{Parabolic equations, irregular domains, diffuse domain method, weighted norms, error estimates}


\pacs[MSC Classification]{65M15, 65M60, 65Y20}

\maketitle
\section{Introduction}

Combined with appropriate initial values and boundary conditions, parabolic partial differential equations (PDEs) have been widely applied in various mathematical models. Notable examples include the Navier-Stokes equations and the time-dependent advection-diffusion equation in fluid dynamics \cite{Roger1984}, Stokes-Darcy problems arising in petroleum and biomedical engineering \cite{MartinaBoris2023}, as well as the Allen-Cahn equation and other phase field models used to describe phase transitions and separations \cite{DuFeng2020,Acta1979}, among many others. Most existing numerical methods for solving interface problems are based on sharp interface approaches, which rely on explicit surface parameterization. This requirement poses a significant challenge for handling complex geometries and mesh generation. Examples of such methods include the extended and composite finite element methods \cite{DolbowHarari2009,FriesBelytschko2010}, immersed interface methods \cite{LeVequeLi1995,LiIto2006,WangKai2023}, virtual node methods with embedded boundary conditions \cite{BedrossianZhu2010,HellrungLee2012}, and matched interface and boundary methods \cite{ZhaoWei2009,LiFeng2015,AmlanRay2023}, among others. Many of these techniques require specialized computational tools that are not readily available in standard finite element and finite difference software packages.

Over the past two decades, the diffuse interface method (DIM) has gained widespread attention and is regarded as an effective alternative approach to sharp interface methods for solving PDE problems on complex geometries. This method represents the physical domain implicitly using a phase-field function, which can be interpreted as the domain's indicator function when the interface thickness approaches zero and  is often coupled with classic discretization schemes, such as finite difference method,  finite element method \cite{YangMao2019}, spectral method \cite{BuenoFenton2006}, and Nitsche's method \cite{NguyenStoter2018}. 
The DIM  has been applied to elliptic interface problems, two-phase flow problems, and various material and engineering applications. Kockelkoren et al. \cite{KockelkorenLevine2003} were the first to apply the diffuse interface method to study diffusion inside a cell with zero Neumann boundary conditions.  Many numerical methods have been developed for solving different two-phase flow problems based on DIM. For example, Liu et al. combined the DIM with the consistent and conservative phase-field method to solve two-phase flows in complex geometries \cite{LiuChai2022}. The DIM was applied to solve two-phase flows of viscous, incompressible fluids with matched densities, leading to coupled Navier-Stokes or Cahn-Hilliard systems \cite{FrigeriGrasselli2015,Abels2009}. Moreover, the diffuse interface method can address miscible fluids of different densities \cite{AbelsLengeler2014}, compressible fluids \cite{FeireislRocca2010}, Stokes-Darcy coupled equations \cite{MartinaBoris2023}, and problems involving more than two phases, where additional labeling functions are introduced to distinguish among them \cite{BrannickLiu2015}. The diffuse interface method also has been extensively used to solve various models, such as the material model with interfaces that can be advected or stretched \cite{TeigenLi2009,TorabiLowengrub2009}, the patient-specific human liver model based on MRI scans \cite{Stoter2017}, the PDEs in moving geometries \cite{ElliottStinner2011}.The DIM approach is also a valuable tool for solving variational inverse problems, with corresponding convergence rates estimated in \cite{BurgerElvetun2015}. 

The diffuse domain method (DDM) \cite{AndersonMcFadden1998,AndreasAxel2006}, can be viewed as a variant of  DIM. 
The DDM represents the original physical domain implicitly using a phase-field function with a narrow diffuse interface layer, wherein the value of the phase-field function rapidly transits from 1 inside the domain to 0 outside the domain \cite{KarlLowengrub2015}. The original PDE problem is then subsequently reformulated into a similar problem in a larger rectangular domain. As a result, the challenges associated with mesh generation for complex domains are mitigated, enabling the straightforward generation of spatial meshes for the rectangular domain to solve the transformed PDE problem with existing numerical schemes. 
The DDM has been used to solve PDEs in complex, stationary, or moving geometries with Dirichlet, Neumann, and Robin boundary conditions \cite{LiLowengrub2009}. 
The properties and convergence of the diffuse domain method have  been analyzed in several studies. Li et al. demonstrated that, in the diffuse domain method, several approximations to the physical boundary conditions converge asymptotically to the correct sharp interface problem \cite{LiLowengrub2009}. They also observed that the choice of boundary condition can significantly affect numerical accuracy. Furthermore, Lervag et al. discussed that for specific choices of boundary condition approximations, the asymptotic convergence of the diffuse domain method can be improved to second order \cite{KarlLowengrub2015}. Lowengrub et al. have applied this method to elliptic problems and provided an asymptotic analysis of the boundary layer in \cite{KarlLowengrub2015, AlandLowengrub2010}. Additionally, Franz et al. analyzed the error estimates in the $L^{\infty}$-norm for one-dimensional elliptic equations \cite{FranzRoos2012}. Numerical errors in $L^2$, $L^{\infty}$, and $H^1$-norms on the original region have been further investigated for elliptic problems with Dirichlet boundary conditions \cite{Schlottbom2016}. Burger et al. constructed weighted Sobolev spaces based on the phase-field function and analyzed the approximation error in the extended region within the weighted $L^2$-norm \cite{BurgerElvetun2017}. Moreover, the convergence rate on the original domain for the Stokes-Darcy coupled problem has been discussed in \cite{MartinaBoris2023}. Guo et al. coupled the DDM with an interface model to simulate two-phase fluid flows with variable physical properties while maintaining thermodynamic consistency \cite{GuoYu2021}. 
The DDM approach was also  employed to develop biomedical models, such as the chemotaxis-fluid diffuse-domain model for simulating bioconvection \cite{WangChertock2023}, a needle insertion model \cite{Jerg2020}, etc.

In this paper, we are concerned with  the diffuse domain method for solving the following linear second-order parabolic equation with a Neumann boundary condition:
\begin{equation}
	\label{eq2-1}
	\left\{
	\begin{aligned}
		& u_t = \nabla\cdot(A\nabla u) + f(t), \quad && \bm x \in D,\quad 0 \leq t \leq T\\
		& u|_{t=0} = u_0, \quad && \bm x \in D,\\
		& (A\nabla u) \cdot \bm n = g(t), \quad && \bm x \in \partial D, \quad 0 \leq t \leq T,
	\end{aligned}
	\right.
\end{equation}
where $D$ is an open bounded Lipschitz domain with irregular shape in $\mathbb{R}^d$ ($d \geq 1$), $T>0$ is the terminal time, $A(\bm x)>0$ is the diffusion coefficient and fulfills $\kappa\leq A(\bm x)\leq \kappa^{-1}$ for all $\bm x \in D$ with some constant $\kappa>0$, $u(t,\bm x)$ is the unknown function,   $f(t,\bm x)$ is the source term, $u_0(\bm x)$ is the initial value, and $g(t,\bm x)$ is the Neumann boundary value. We will analyze the convergence of the diffuse domain solution as the interface thickness parameter goes to zero and derive corresponding error estimates  measured in the weighted $L^2$ and weighted $H^1$ norms.  The  analysis techniques  mainly follow the weighted Sobolev space-based  framework developed in \cite{BurgerElvetun2017}, but also  with some enhancements. Our results successfully decouple the relationship between the hidden constants and the interface thickness parameter  in  the error estimates. Furthermore, the convergence rates are also improved to second order in the weighted $L^2$ norm and first order in the weighted $H^1$ norm, which are optimal  as verified by the numerical experiments. To the best of our knowledge, this work presented in this paper is the first study on rigorous error analysis of the DDM for second-order parabolic equations.

The rest of the paper is organized as follows. The diffuse domain method for the model problem \eqref{eq2-1} is first described  in Section \ref{algorithm-description}, and several preliminaries and lemmas for the weighted Sobolev spaces are given in Section \ref{pre}. The convergence of the diffuse domain solution to the original solution as the interface thickness parameter goes to zero is proved in  Section \ref{theory}, together with the corresponding optimal error estimates under the weighted $L^2$ and $H^1$ norms. In Section \ref{numerical},  some numerical experiments are carried out to verify the theoretical results. Finally, some concluding remarks are drawn in Section \ref{conclusion}.

\section{The diffuse domain method}\label{algorithm-description}

First of all, some standard notations are proposed for later provement. For a given open bounded Lipschitz domain $\Omega \subset \mathbb{R}^d$ and for nonnegative integer $s$, denote $H^s(\Omega)$ as the standard integer-order Sobolev spaces on $\Omega$ with norm $\|\cdot\|_{s,\Omega}$ and semi-norm $|\cdot|_{s,\Omega}$, and the corresponding $L^2$-inner product is $(\cdot,\cdot)_{\Omega}$. The corresponding norm of space $H^s(\Omega)$ is $\|\cdot\|_{s,\Omega}$ and $\|v\|_{k,\infty,\Omega}={\rm ess}\sup_{|\bm \alpha|\leq k}\|D^{\bm \alpha}v\|_{L^{\infty}(\Omega)}$ for any function $v$ such that the right-hand side term makes sense, where $\bm \alpha=(\alpha_1, \cdots, \alpha_d)$ is a multi-index and $|\bm \alpha|=\alpha_1+\cdots+\alpha_d$. $H_0^s(\Omega)$ is the closure of $C_0^{\infty}(\Omega)$ with homogeneous Dirichlet boundary conditions. 
Moreover, given two quantities $a$ and $b$, $a \lesssim b$ is the abbreviation of $a \leq Cb$, where the hidden constant $C$ is positive and independent of the mesh size; $a\eqsim b$ is equivalent to $a\lesssim b \lesssim a$.


Assume that $u_0\in H^2(D)$, $A\in H^1(D)$, $g(t)\in L^2(0,T;L^2(\partial D))$ and $f(t)\in L^2(0,T;L^2(D))$, then
the variational formulation of \eqref{eq2-1} is given by: find $u \in L^2(0,T;H^1(D))$ and $u_t \in L^2(0,T;L^2(D))$ such that
\begin{equation}
	\label{eq2-2}
	\left\{\begin{aligned}
		&(u_t,v)+a(u,v)=\ell(v), \qquad \forall \,v \in H^1(D),\ 0 \leq t \leq T,\\
		&u(0,\bm x)=u_0(\bm x),
	\end{aligned}
	\right.
\end{equation}
where the bilinear operator $a(\cdot,\cdot)$ is symmetric positive and defined by
\begin{equation}
	\label{bilinear_operator}
	a(w,v)=\int_{D} A\nabla w\cdot\nabla v \dd \bm x, \qquad \forall\,w, v\in H^1(D),
\end{equation}
and the linear operator $\ell(\cdot)$ is defined by
\begin{equation}
	\label{linear_operator}
	\ell(v)=\int_{D} fv\dd \bm x + \int_{\partial D} gv\dd \sigma,\qquad \forall\,  v\in H^1(D),
\end{equation}

Next, we apply the DDM to approximate the above integrals on the domain $D$ \cite{BurgerElvetun2017}. 
First, let us introduce a signed distance function $d_D(\bm x)=\dist(\bm x,D)-\dist(\bm x,\mathbb{R}^n\setminus D)$, $\bm x \in \mathbb{R}^n$. It's obvious that the domain $D$ can be represented as $D=\{\bm x\,|\,d_D(\bm x)<0 \}$ with  $\partial D=\{\bm x\,|\,d_D(\bm x)=0 \}$. To relax the sharp interface representation, let further  introduce $\varphi^{\epsilon}(\bm x)=S(-d_D(\bm x)/\epsilon)$, where $\epsilon>0$ is a small interface thickness parameter and $S$ being a smooth function such  that $S(s) = -1$ for $s< -1$, 
$S(s) = 1$ for $s> 1$, and monotonic transition occurs when $s\in (-1,1)$. 
For instance, the sigmoidal  function \begin{equation} S(s) = \tanh(3s) \end{equation} is often taken  and we follow it in this paper.  As $\epsilon$ tends to zero, $S(\cdot/\epsilon)$ converges to the sign function, and hence, the phase-field function $$\omega^{\epsilon}(\bm x):=(1+\varphi^{\epsilon}(\bm x))/2$$  converges to the indicator function $\chi_D$ of $D$. 
It is easy to find that $\omega_{\epsilon}=0$ for $\bm x \notin D_{\epsilon}$, $0\leq\omega_{\epsilon}\leq \frac 1 2$ for $\bm x \in D_{\epsilon}\setminus D$, $\frac 1 2\leq\omega_{\epsilon}\leq 1$ for $\bm x \in D\setminus D_{-\epsilon}$ and $\omega_{\epsilon}=1$ for $\bm x \in D_{-\epsilon}$. Furthermore, we can easily find that $|S'(s)|\lesssim 1$ for all $s$, and \begin{equation}\int_{-\epsilon}^{\epsilon} \frac{3}{2\epsilon}S'\left(-\frac{3s}{\epsilon}\right)\dd s=1.\end{equation}
The key idea of DDM  is to use a weighted averaging of the integrals over $D_s=\{\bm x\,|\,d_D(\bm x)<s \}$, $s \in (-\epsilon,\epsilon)$, instead of integrating over the original irregular domain $D=D_0$ only. In order to generate the spatial mesh  conveniently, one usually further fixes a larger rectangular domain $\Omega$ such that $D\subset D_{\epsilon} \subset \Omega$ in practice. Fig. \ref{fig1} describe the geometric relation among the original physical domain $D$, the $\epsilon$-extension domain $D_{\epsilon}$ and the covering rectangular domain $\Omega$. Since $\omega_{\epsilon}=0$ for $\bm x \notin D_{\epsilon}$, we can compute the weighted averaging of the integrals over regular domain $\Omega$.

\begin{figure}[htbp]
	\centering
	\label{fig1}
	\centering
	\includegraphics[width = 140pt,height=140pt]{./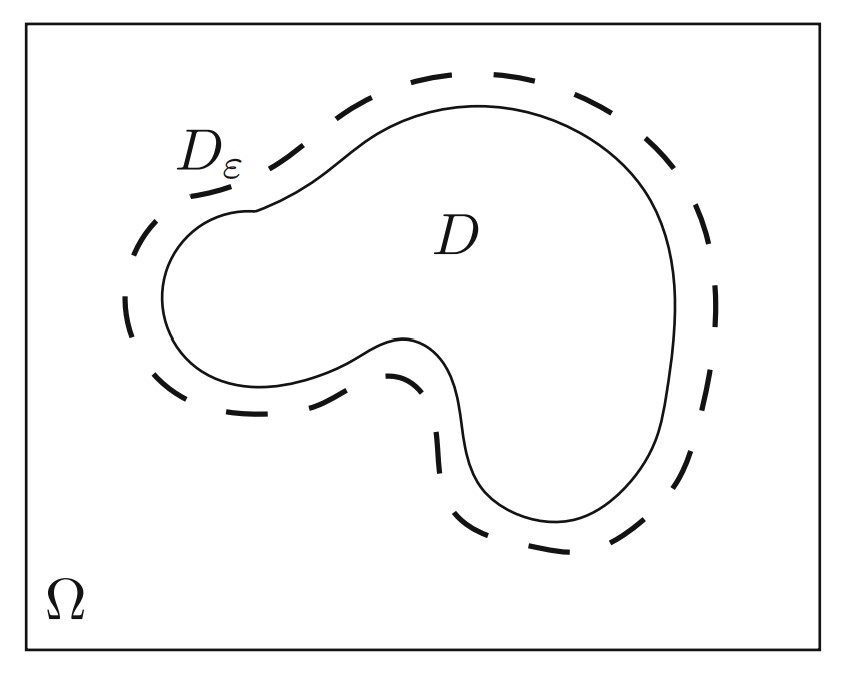}
	\caption{The relationship among $D$, $D_{\epsilon}$ and $\Omega$ for some $\epsilon>0$.}
\end{figure}

Following the similar arguments of \cite{BurgerElvetun2017}, consider the approximation of boundary integration over $\partial D$. Since $\frac{3}{2\epsilon} S'(\frac{3s}{\epsilon})$ approximates a concentrated distribution at zero, we expect for any integrable function $h:\Omega \to \mathbb{R}$,
\begin{equation*}
	\begin{split}
		\int_D h(\bm x)\dd \bm x &=\int_{-\infty}^{\infty} \frac 3 {2\epsilon} S'\left(-\frac{3s}{\epsilon}\right)\int_{D_0} h(\bm x)\dd \bm x\dd s\\
		&\approx \int_{-\epsilon}^{\epsilon} \frac{3}{2\epsilon} S'\left(-\frac{3s}{\epsilon}\right)\int_{D_s} h(\bm x)\dd \bm x \dd s\\
		&=\frac{1}{2}\int_{-1}^{1}\int_{\{\varphi^{\epsilon}>s\}} h(\bm x)\dd \bm x \dd s.
	\end{split}
\end{equation*}
Using the Fubini's theorem, we may further rewrite
\begin{equation}\label{approx1}
	\int_{-1}^1 \int_{\{\varphi^{\epsilon}>s \}}h(\bm x)\dd \bm x\dd s = \int_{D_{\epsilon}}\int_{-1}^{\varphi^{\epsilon}(\bm x)}\dd s\ h(\bm x)\dd \bm x = \int_{D_{\epsilon}}(1+\varphi^{\epsilon}(\bm x))h(\bm x)\dd \bm x.
\end{equation}
By the co-area formula, we then derive an approximation for the boundary integral as
\begin{equation}\label{approx2}
	\begin{aligned}
		\int_{\partial D} h(\bm x)\dd \sigma(\bm x) \approx &\; \frac12\int_{-1}^1\int_{\partial\{\varphi^{\epsilon}>s \}}h(\bm x)\dd \sigma(\bm x)\dd s\\
		=&\;\frac12\int_{D_{\epsilon}} h(\bm x)\left|\nabla \varphi^{\epsilon}(\bm x)\right|\dd \bm x\\
		=&\int_{D_{\epsilon}} h(\bm x)\left|\nabla \omega^{\epsilon}(\bm x)\right|\dd \bm x.
	\end{aligned}
\end{equation}

Let us define the weighted  $L^p$ space on $D_{\epsilon}$, $1\leq p < \infty$, associated with the phase-field function $\omega_{\epsilon}$ as follows:
\begin{equation*}
	L^p(D_{\epsilon};\omega_{\epsilon})=\left\{v\,\Big|\,\int_{D_{\epsilon}} |v|^p\,\omega_{\epsilon}\dd \bm x<\infty \right\},
\end{equation*}
with the norm
\begin{align*}
	\|v\|_{L^p(D_{\epsilon};\omega_{\epsilon})}=\left(\int_{D_{\epsilon}}|v|^p \,\omega_{\epsilon}\dd \omega_{\epsilon}\right)^{\frac 1 p}.
\end{align*}
Based on the weighted spaces $L^p(D_{\epsilon};\omega_{\epsilon})$,  the weighted Sobolev spaces are consequently defined as
\begin{equation*}
	W^{s,p}(D_{\epsilon};\omega_{\epsilon})=\left\{v\in L^p(D_{\epsilon};\omega_{\epsilon})\,|\, D^{\bm \alpha}v \in L^p(D_{\epsilon};\omega_{\epsilon}), \ \forall\, |\bm\alpha|\leq s \right\},
\end{equation*}
and $H^s(D_{\epsilon};\omega^{\epsilon}):=W^{s,2}(D_{\epsilon};\omega_{\epsilon})$ with the norm 
\begin{equation*}
	\|v\|_{W^{s,p}(D_{\epsilon};\omega_{\epsilon})}=\left(\int_{D_{\epsilon}}\sum\limits_{|\bm\alpha|\leq s}\left|D^{\bm \alpha}v\right|^p\dd\omega_{\epsilon}\right)^{\frac 1 p}.
\end{equation*}

Therefore, \eqref{approx1} and \eqref{approx2} leads to the following  diffuse domain approximation of \eqref{eq2-2} in $D_{\epsilon}$: find  $u^{\epsilon} \in L^2(0,T;H^1(D_{\epsilon};\omega^{\epsilon}))$ such that
\begin{equation}
	\label{weighted_variational}
	\left(u_t^{\epsilon},v \right)_{D_{\epsilon};\omega^{\epsilon}}+a^{\epsilon}(u^{\epsilon},v)= \ell^{\epsilon}(v), \qquad \forall \,v \in H^1(D_{\epsilon};\omega^{\epsilon}),
\end{equation}
where
\begin{equation*}
	\begin{split}
		a^{\epsilon}(w,v) &= \int_{D_{\epsilon}} \widetilde A\nabla w\cdot\nabla v \,\omega^{\epsilon}\dd \bm x,\qquad \forall\,w, v\in H^1(D_{\epsilon};\omega^{\epsilon}),\\
		\ell^{\epsilon}(v) &= \int_{D_{\epsilon}} \widetilde f v\omega^{\epsilon}\dd \bm x + \int_{D_{\epsilon}} \widetilde gv\left|\nabla \omega^{\epsilon} \right|\dd \bm x,\qquad \forall\, v\in H^1(D_{\epsilon};\omega^{\epsilon})
	\end{split}
\end{equation*}
with the initial value $u^{\epsilon}(0) = \widetilde{u_0}$. Here $\widetilde A$, $\widetilde{u_0}$, $\widetilde f(t)$  and $\widetilde g(t)$ are certain extensions of $A$, $u_0$, $f(t)$  and $g(t)$ from $D$ to $D_{\epsilon}$, respectively. The extension details will be illustrated in the following.

Let us  also assume the domain boundary $\partial D$ is of class $C^{1,1}$ from now on in this paper.  Define the $\epsilon$-tubular neighborhood of $\partial D$ by
\begin{equation*}
	\Gamma_{\epsilon} = D_{\epsilon}\setminus \overline{D_{-\epsilon}}.
\end{equation*}
It holds that $\dist(\bm x,\partial D)< \epsilon$ for all $\bm x \in \Gamma_{\epsilon}$ and $\dist(\bm x,\partial D)\geq \epsilon$ for all $\bm x \in \Omega\setminus \Gamma_{\epsilon}$. 
Due to the $C^{1,1}$ regularity of $\partial D$, the projection of $\bm z \in \Gamma_{\epsilon}$ onto $\partial D$ is unique for  sufficiently small $\epsilon$ \cite{Adams1975}, i.e., for each $z \in \Gamma_{\epsilon}$, there exists a unique $\bm x \in \partial D$ such that $\bm z = \bm x+d_{D}(\bm z)\bm n(\bm x)$, where $\bm n(\bm x)$ is the outward unit normal vector for $\bm x\in \partial D$. 	
In order to keep the regularity of the source function $f(t)$ and the initial value $u_0$, one can, for example,  extend them by the reflection with respect to the outward normal direction of  the domain  boundary $\partial D$. For $L^p$-functions, this can be achieved by an extension by constant and for the $W^{k,p}$-functions, this can be achieved by the construction shown in the Chapter 4 of \cite{Adams1975}.
Regarding the extension of Neumann boundary value, since $g(t) \in L^2(\partial D)$, $g(t)$ is defined a.e. on $\partial D$ and we can define an extension of $g(t)$ a.e. on $\Gamma_{\epsilon}$ by
\begin{equation*}
	\widetilde{g}(t,\bm x+s\bm n(\bm x))=g(t,\bm x), \quad -\epsilon\leq s\leq \epsilon, \ \bm x \in \partial D,
\end{equation*}
and further simply take $\widetilde{g}(t,\bm x)=0$ for $\bm x\in D_{\epsilon}\setminus \Gamma_{\epsilon}$.

Based on the definition of weight function $\omega_{\epsilon}$, it's obvious that $|\nabla \omega_{\epsilon}(\bm x)|\neq 0$ only for $\bm x \in \Gamma_{\epsilon}$.	
Since the weighted function  $\omega_{\epsilon}(\bm x)$ and its gradient $\nabla \omega_{\epsilon}(\bm x)$ will vanish on $\partial D_{\epsilon}$ and outside of  $D_{\epsilon}$,  we can further extend the integration on $D_{\epsilon}$ to the larger rectangular domain $\Omega$, which is easy for generating spatial grids in practice. The weighted Sobolev space and corresponding variational problem \eqref{eq2-2} defined in $D_{\epsilon}$ is then  transformed to 
a similar problem defined in $\Omega$: find  $u^{\Omega} \in L^2(0,T;H^1(\Omega;\omega^{\epsilon}))$ such that
\begin{equation}
	\label{weighted_variational_1}
	\left(u_t^{\Omega},v\right)_{\Omega;\omega^{\epsilon}}+a^{\Omega}(u^{\Omega},v)= \ell^{\Omega}(v), \quad \forall\, v \in H^1(\Omega;\omega^{\epsilon}), 
\end{equation}
where
\begin{equation*}
	\begin{split}
		a^{\Omega}(w,v) &= \int_{\Omega} \overline{A}\nabla w\cdot\nabla v \,\omega^{\epsilon}\dd \bm x,\qquad \forall\,w, v\in H^1(\Omega;\omega^{\epsilon}),\\
		\ell^{\Omega}(v) &= \int_{\Omega} \overline{f}v\,\omega^{\epsilon}\dd \bm x + \int_{\Omega} \overline gv\left|\nabla \omega^{\epsilon} \right|\dd \bm x,\qquad \forall v\in H^1(\Omega;\omega^{\epsilon})
	\end{split}
\end{equation*}
with the initial value $u^{\Omega}(0)=\overline{u_0}$. Here $\overline{A}$, $\overline{u_0}$, $\overline f(t)$ and $\overline g(t)$ are  the extensions of $\widetilde{A}$, $\widetilde{u_0}(\bm x)$, $\widetilde f(t)$ and $\widetilde g(t)$ from $D_{\epsilon}$ to $\Omega$ and the choices of extensions are not unique. Since 
$\omega_{\epsilon}(\bm x)=0$ for $\bm x \in \Omega\setminus D_{\epsilon}$ and consequently $u^{\epsilon}|_{D_{\epsilon}}=u^{\Omega}|_{D_{\epsilon}}$, we only need investigate convergence of the diffuse domain solution $u^\epsilon$ defined on $D_{\epsilon}$ produced from the variational  problem \eqref{weighted_variational} to the exact solution $u$ of the original variational problem \eqref{eq2-2}.

\section{Preliminaries}\label{pre}

Note that $\Gamma_{\epsilon}$ can be rewritten as
\begin{equation*}
	\Gamma_{\epsilon}=\left\{\bm z \in \Omega\,\big|\, \exists\  \bm x \in \partial D,\  |s|<\epsilon,\ \bm z =\bm x + s\bm n(\bm x) \right\}.
\end{equation*}
Furthermore, 
$\left|\Gamma_{\epsilon} \right|\lesssim \epsilon\mathcal{H}^{n-1}(\partial D)$,
where $\left|\Gamma_{\epsilon}\right|=\mathcal{L}^n(\Gamma_{\epsilon})$ is the $n$-dimensional Lebesgue measure of $\Gamma_{\epsilon}$ and $\mathcal{H}^{n-1}(\partial D)$ is the $(n-1)$-dimensional Hausdorff measure of $\partial D$. 

\subsection{On the weighted Sobolev space}

The following three theorems (Theorems \ref{trace_thm}, \ref{embed_thm} and \ref{Poincare_thm})
have readily been proved in \cite{BurgerElvetun2017}.
\begin{theorem}[Trace Theorem]
	\label{trace_thm}
	Let $\epsilon_0>0$ be sufficiently small and $1\leq p < \infty$. Then, there exists a constant $C>0$ such that for any $\epsilon \in [0, \epsilon_0]$ and $v \in W^{1,p}(D_{\epsilon};\omega_{\epsilon})$, there holds
	\begin{equation*}
		\int_{D_{\epsilon}} |v|^p|\nabla \omega_{\epsilon}|\dd \bm x \lesssim \|v\|_{W^{1,p}(D_{\epsilon};\omega_{\epsilon})}^p.
	\end{equation*}
\end{theorem}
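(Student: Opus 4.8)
The plan is to exploit the fact that $\nabla\omega_\epsilon$ is supported only in the thin tube $\Gamma_\epsilon=D_\epsilon\setminus\overline{D_{-\epsilon}}$, to parametrize $\Gamma_\epsilon$ by normal coordinates, reduce the claimed inequality to a one‑dimensional estimate along each normal fiber, and then push the boundary contribution this produces back onto the fixed domain $D$ via the classical trace theorem. First I would set up the geometry. Since $\omega_\epsilon$ depends on the signed distance $d_D$ alone, $|\nabla\omega_\epsilon|=0$ off $\Gamma_\epsilon$, so only the integral over $\Gamma_\epsilon$ matters; and because $\partial D$ is of class $C^{1,1}$ and $\epsilon\le\epsilon_0$ is small, the map $\Phi(\bm x,s)=\bm x+s\,\bm n(\bm x)$, $\bm x\in\partial D$, $s\in(-\epsilon,\epsilon)$, is a bi-Lipschitz parametrization of $\Gamma_\epsilon$ whose Jacobian $J(\bm x,s)$ obeys $c_1\le J\le c_2$ with $c_1,c_2$ independent of $\epsilon$ (the same estimate that underlies $|\Gamma_\epsilon|\lesssim\epsilon\,\mathcal{H}^{n-1}(\partial D)$). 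Writing $m(s):=\omega_\epsilon$ viewed as a function of $s=d_D$, one checks that $m$ is smooth and nonincreasing with $0\le m\le 1$, $m(-\epsilon)\le 1$, and, since $|\nabla d_D|=1$ a.e., that $|\nabla\omega_\epsilon(\Phi(\bm x,s))|=-m'(s)$. For a.e. fixed $\bm x$ put $w(s):=v(\Phi(\bm x,s))\in W^{1,p}(-\epsilon,\epsilon)$ (absolute continuity of $v$ on fibers) and note $|w'(s)|\le|\nabla v(\Phi(\bm x,s))|$.

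The core step is a one‑dimensional estimate: integrating by parts in $s$ and applying Young's inequality to the cross term,
\begin{align*}
	\int_{-\epsilon}^{\epsilon}|w(s)|^p\bigl(-m'(s)\bigr)\dd s
	&=|w(-\epsilon)|^p m(-\epsilon)-|w(\epsilon)|^p m(\epsilon)+p\int_{-\epsilon}^{\epsilon}|w|^{p-2}w\,w'\,m\dd s\\
	&\le|w(-\epsilon)|^p+(p-1)\int_{-\epsilon}^{\epsilon}|w|^p m\dd s+\int_{-\epsilon}^{\epsilon}|w'|^p m\dd s .
\end{align*}
Integrating this over $\partial D$ and changing variables back through $\Phi$ (using $c_1\le J$), the last two terms are controlled by $\|v\|_{L^p(D_\epsilon;\omega_\epsilon)}^p$ and $\|v\|_{W^{1,p}(D_\epsilon;\omega_\epsilon)}^p$ respectively, because the weight $m=\omega_\epsilon$ sits inside both integrands. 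This is precisely why the integration by parts, rather than a crude fundamental‑theorem‑of‑calculus bound, is needed: on the outer half of $\Gamma_\epsilon$ the weight $\omega_\epsilon$ degenerates to zero, so an unweighted gradient integral there could not be absorbed.

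It then remains to absorb the boundary term $\int_{\partial D}|w_{\bm x}(-\epsilon)|^p\dd\sigma(\bm x)$, where $w_{\bm x}(s)=v(\Phi(\bm x,s))$. Writing $w_{\bm x}(-\epsilon)=w_{\bm x}(0)-\int_{-\epsilon}^{0}w_{\bm x}'(\tau)\dd\tau$ with $w_{\bm x}(0)$ the trace of $v$ on $\partial D$, Hölder's inequality gives $|w_{\bm x}(-\epsilon)|^p\lesssim|v(\bm x)|^p+\epsilon^{p-1}\int_{-\epsilon}^{0}|w_{\bm x}'(\tau)|^p\dd\tau$. Integrating over $\partial D$: the first term is handled by the classical trace inequality on the fixed Lipschitz domain $D$, $\int_{\partial D}|v|^p\dd\sigma\lesssim\|v\|_{W^{1,p}(D)}^p$, together with $\omega_\epsilon\ge\tfrac12$ on all of $D$, which yields $\|v\|_{W^{1,p}(D)}^p\le 2\|v\|_{W^{1,p}(D_\epsilon;\omega_\epsilon)}^p$; the second term, changed back through $\Phi$ over the inner half‑tube $\Gamma_\epsilon\cap D=\{-\epsilon<d_D<0\}$, where again $\omega_\epsilon\ge\tfrac12$, is $\lesssim\epsilon^{p-1}\int_{\Gamma_\epsilon\cap D}|\nabla v|^p\dd\bm x\lesssim\epsilon^{p-1}\|v\|_{W^{1,p}(D_\epsilon;\omega_\epsilon)}^p\lesssim\|v\|_{W^{1,p}(D_\epsilon;\omega_\epsilon)}^p$ since $\epsilon\le\epsilon_0$. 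Summing all contributions proves the inequality, with a constant depending only on $p$, $\epsilon_0$, and the $C^{1,1}$ character of $\partial D$.

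The main obstacle is the careful bookkeeping of the $\epsilon$‑powers and the uniformity of the constants as $\epsilon\to 0$: one must ensure that the Jacobian bounds $c_1,c_2$ of the normal‑coordinate map and the trace constant do not degenerate. The former is guaranteed by the $C^{1,1}$ assumption and the smallness of $\epsilon_0$; the latter is automatic, since the classical trace inequality is invoked only on the fixed domain $D$ and never on the moving domains $D_{-\epsilon}$. The one genuinely delicate design choice is to route the interior value $w_{\bm x}(-\epsilon)$ back to $\partial D$ through the inner half of the tube — where $\omega_\epsilon\ge\tfrac12$ — rather than deeper into $D_{-\epsilon}$, so that the gradient integral appearing in that step carries the nondegenerate weight for free.
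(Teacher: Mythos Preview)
The paper does not supply its own proof of this theorem; it simply records that Theorems~\ref{trace_thm}--\ref{Poincare_thm} ``have readily been proved in \cite{BurgerElvetun2017}'' and moves on. There is therefore no in-paper argument to compare against.

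Your proposal is a correct and self-contained proof. The key design choices are sound: integrating by parts in the normal variable (rather than using a crude fundamental-theorem-of-calculus bound) is exactly what keeps the weight $\omega_\epsilon$ attached to the gradient term on the outer half of $\Gamma_\epsilon$, where $\omega_\epsilon$ degenerates; and routing the leftover pointwise value at $s=-\epsilon$ back to $\partial D$ through the inner half-tube, where $\omega_\epsilon\ge\tfrac12$, lets you invoke the classical trace inequality on the \emph{fixed} domain $D$ with an $\epsilon$-independent constant. The Jacobian bounds for the normal-coordinate map are uniform for $\epsilon\le\epsilon_0$ by the $C^{1,1}$ assumption, so all constants are indeed independent of $\epsilon$. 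The only point left tacit is the density step (the fiberwise integration by parts is written for absolutely continuous $w$, which one first carries out for smooth $v$ and then passes to the limit in $W^{1,p}(D_\epsilon;\omega_\epsilon)$); this is routine and does not affect the argument.
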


\begin{theorem}[Embedding Theorem]\label{embed_thm}
	Suppose that $\epsilon \in (0, \epsilon_0]$, $1\leq p < \infty$, and $\alpha>0$ be the constant satisfies that for all $s \in (0, 2)$, $\zeta_1 s^{\alpha}\leq (1+S(s-1))/2\leq \zeta_2 s^{\alpha}$ for some constants $\zeta_1,\zeta_2>0$. Then the following embeddings are continuous
	\begin{equation*}
		W^{1,p}(D_{\epsilon};\omega_{\epsilon}) \hookrightarrow L^q(D_{\epsilon};\omega_{\epsilon}), \quad 1 \leq q \leq p_{\alpha}^*, \ q < \infty,
	\end{equation*}
	where
	$	p_{\alpha}^*=\frac{p(n+\alpha)}{n+\alpha-p}$ for $p < n+\alpha$ and  $p_{\alpha}^*=\infty$ for $p \geq n+\alpha.$
	Moreover, there exists a constant $C$ independent of $\epsilon$ such that for any $v \in W^{1,p}(D_{\epsilon};\omega_{\epsilon})$, there holds
	\begin{equation*}
		\|v\|_{L^q}(D_{\epsilon};\omega_{\epsilon}) \lesssim \|v\|_{W^{1,p}(D_{\epsilon};\omega_{\epsilon})}.
	\end{equation*}
\end{theorem}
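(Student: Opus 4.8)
The plan is to split the extended domain as $D_{\epsilon}=D_{-\epsilon}\cup\Gamma_{\epsilon}$, to treat the bulk $D_{-\epsilon}$ (where $\omega_{\epsilon}\equiv1$) by the classical Sobolev embedding, and to treat the collar $\Gamma_{\epsilon}$ (where $\omega_{\epsilon}$ degenerates at the outer face $\partial D_{\epsilon}$) by a weighted embedding set up in tubular-neighborhood coordinates. First, the range $1\le q\le p$ is disposed of at once: since $\int_{D_{\epsilon}}\omega_{\epsilon}\dd\bm x\le|D|+C\epsilon_0\mathcal{H}^{n-1}(\partial D)$ is bounded independently of $\epsilon$, Hölder's inequality on the finite measure space $(D_{\epsilon},\omega_{\epsilon}\dd\bm x)$ yields $\|v\|_{L^q(D_{\epsilon};\omega_{\epsilon})}\lesssim\|v\|_{L^p(D_{\epsilon};\omega_{\epsilon})}\le\|v\|_{W^{1,p}(D_{\epsilon};\omega_{\epsilon})}$ with an $\epsilon$-independent constant, so from now on I would take $p<q\le p_{\alpha}^*$. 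Splitting $\|v\|_{L^q(D_{\epsilon};\omega_{\epsilon})}^q=\|v\|_{L^q(D_{-\epsilon})}^q+\|v\|_{L^q(\Gamma_{\epsilon};\omega_{\epsilon})}^q$, the bulk term is easy: $q\le p_{\alpha}^*$ lies in the range of the classical Sobolev embedding on $D_{-\epsilon}$ (because $p_{\alpha}^*$ decreases in $\alpha$, so $p_{\alpha}^*\le pn/(n-p)$ when $p<n$, while all of $\{q<\infty\}$ is allowed when $p\ge n$), and the family $\{D_{-\epsilon}\}_{\epsilon\in[0,\epsilon_0]}$ has a uniform $C^{1,1}$ character by the regularity of $\partial D$, so $\|v\|_{L^q(D_{-\epsilon})}\lesssim\|v\|_{W^{1,p}(D_{-\epsilon})}\le\|v\|_{W^{1,p}(D_{\epsilon};\omega_{\epsilon})}$ uniformly in $\epsilon$.

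For the collar I would pass to the coordinates $\bm z=\bm x+t\bm n(\bm x)$, $\bm x\in\partial D$, $t\in(-\epsilon,\epsilon)$, which form a bi-Lipschitz map with Jacobian bounded above and below uniformly for $\epsilon\le\epsilon_0$; there the weight depends only on $t$, and the hypothesis relating $(1+S(s-1))/2$ to $s^{\alpha}$ gives $\omega_{\epsilon}\eqsim((\epsilon-t)/\epsilon)^{\alpha}$ on all of $(-\epsilon,\epsilon)$. Rescaling $t=\epsilon\tau$ then identifies $\Gamma_{\epsilon}$, as a weighted domain, with the \emph{fixed} cylinder $\partial D\times(-1,1)$ carrying the \emph{fixed} weight $\mu(\tau)\eqsim(1-\tau)^{\alpha}$, which degenerates only at $\tau=1$ (the face $\partial D_{\epsilon}$) and is comparable to $1$ near $\tau=-1$ (the face $\partial D_{-\epsilon}$). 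On this fixed weighted cylinder the ``effective dimension'' is $(n-1)+(1+\alpha)=n+\alpha$, and the power-weighted Sobolev inequality --- with a trace term on $\{\tau=-1\}$ and exponent $p_{\alpha}^*$ --- holds with an $\epsilon$-free constant; it can be obtained by combining the one-dimensional Hardy inequality in the normal variable with the $(n-1)$-dimensional Sobolev and trace inequalities on $\partial D$, or quoted from the theory of admissible ($\mathrm{dist}^{\alpha}$-type) weights. Transferring this estimate back through the rescaling, and bounding the trace on $\partial D_{-\epsilon}$ by $\|v\|_{W^{1,p}(D_{-\epsilon})}\le\|v\|_{W^{1,p}(D_{\epsilon};\omega_{\epsilon})}$ via the trace theorem on the uniformly nice domain $D_{-\epsilon}$, gives $\|v\|_{L^q(\Gamma_{\epsilon};\omega_{\epsilon})}\lesssim\|v\|_{W^{1,p}(D_{\epsilon};\omega_{\epsilon})}$, which together with the bulk estimate proves the theorem.

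The hard part will be the $\epsilon$-uniformity of the constants, which forces the two regions to be handled jointly rather than separately. Indeed, the embedding $W^{1,p}(\Gamma_{\epsilon};\omega_{\epsilon})\hookrightarrow L^q(\Gamma_{\epsilon};\omega_{\epsilon})$ on the collar \emph{alone} is not uniform --- its constant degenerates like $\epsilon^{1/q-1/p}\to\infty$ for $q>p$, as one sees already on $v\equiv1$, because $\Gamma_{\epsilon}$ has width $O(\epsilon)$. What rescues the full problem is that a function in $W^{1,p}(D_{\epsilon};\omega_{\epsilon})$ cannot jump across the level set $\{d_D=-\epsilon\}$, where $\omega_{\epsilon}\eqsim1$, so its values throughout the thin collar stay tied to the bulk through a trace on $\partial D_{-\epsilon}$; this is precisely why the collar estimate must be written against the full norm $\|v\|_{W^{1,p}(D_{\epsilon};\omega_{\epsilon})}$, trace term included, rather than against the collar norm. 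A second subtlety is that the rescaling $t=\epsilon\tau$ is anisotropic: the normal-derivative term acquires a harmless factor that is $\le1$ for $\epsilon\le1$, while the tangential term picks up $\epsilon^{-1}$ and must be absorbed against the $O(\epsilon)$ mass of the collar together with the $(n-1)$-dimensional Sobolev and trace inequalities on $\partial D$; getting these balances right simultaneously for all $q\le p_{\alpha}^*$ is where the care lies. An alternative, and arguably cleaner, route to the same conclusion is to verify that the measure $\omega_{\epsilon}\dd\bm x$ is uniformly (in $\epsilon$) doubling and supports a uniform $(1,p)$-Poincar\'e inequality, and then to invoke the self-improvement to a Sobolev--Poincar\'e inequality valid on doubling metric--measure spaces; in either approach the heart of the matter is making every constant independent of $\epsilon$ across the boundary layer.
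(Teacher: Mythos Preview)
The paper does not actually prove this theorem: it states that Theorems~\ref{trace_thm}, \ref{embed_thm} and \ref{Poincare_thm} ``have readily been proved in \cite{BurgerElvetun2017}'' and gives no argument of its own. So there is no in-paper proof to compare against; the benchmark is the cited reference of Burger--Elvetun.

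Your outline is sound and is in fact the standard route for such power-weighted embeddings: split off the bulk $D_{-\epsilon}$ (unweighted, uniformly Lipschitz in $\epsilon$), flatten the collar $\Gamma_{\epsilon}$ in tubular coordinates, use the hypothesis on $S$ to identify $\omega_{\epsilon}$ with a $\mathrm{dist}^{\alpha}$ weight, rescale to a fixed cylinder, and apply the one-dimensional Hardy/power-weight inequality in the normal direction combined with the $(n-1)$-dimensional Sobolev theory on $\partial D$. Your diagnosis of the real content --- that the collar estimate alone is \emph{not} $\epsilon$-uniform and must be coupled to the bulk through a trace on $\partial D_{-\epsilon}$ --- is exactly right and is the point that distinguishes this from a routine citation of weighted Sobolev spaces. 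The ``effective dimension $n+\alpha$'' heuristic correctly predicts the critical exponent $p_{\alpha}^*$. The alternative you mention (verify uniform doubling plus a uniform $(1,p)$-Poincar\'e inequality and invoke self-improvement on metric measure spaces) is also a legitimate path and is closer to how some references package the result. In short: your plan is correct and essentially what the cited source does; what remains is bookkeeping, chiefly tracking the $\epsilon$-powers through the anisotropic rescaling and making the trace/Poincar\'e constants on $\{D_{-\epsilon}\}_{0<\epsilon\le\epsilon_0}$ explicitly uniform.
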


\begin{theorem}[Poincare-Friedrichs-type inequality]\label{Poincare_thm}
	Suppose that $\epsilon \in (0,\epsilon_0]$, $1\leq p<\infty$, and  the extended domain $D_{\epsilon}$ be connected. Then, there exists a constant $C$ independent of $\epsilon$ such that for any $\epsilon \in (0,\epsilon_0)$ and $v \in W^{1,p}(D_{\epsilon};\omega_{\epsilon})$, there holds
	\begin{equation*}
		\|v\|_{L^p(D_{\epsilon};\omega_{\epsilon})} \lesssim \|\nabla v\|_{L^p(D_{\epsilon};\omega_{\epsilon})}^p + \int_{D_{\epsilon}}|v|^p|\nabla \omega_{\epsilon}|\dd \bm x.
	\end{equation*}
	
\end{theorem}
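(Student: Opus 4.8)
The plan is to prove the estimate by a compactness (contradiction) argument, after separating the two regions where the weight $\omega_{\epsilon}$ behaves differently: the bulk $D_{-\epsilon}$, where $\omega_{\epsilon}\equiv 1$ so the claim reduces to the classical Poincar\'e--Friedrichs inequality on a fixed Lipschitz domain, and the tubular layer $\Gamma_{\epsilon}=D_{\epsilon}\setminus\overline{D_{-\epsilon}}$, where $\omega_{\epsilon}$ degenerates to $0$ on $\partial D_{\epsilon}$ but $|\nabla\omega_{\epsilon}|$ is supported in $\Gamma_{\epsilon}$ and bounded below on a subset of positive measure. First, fix $\epsilon\in(0,\epsilon_0]$ and suppose no such constant exists: there is a sequence $v_k\in W^{1,p}(D_{\epsilon};\omega_{\epsilon})$ with $\|v_k\|_{L^p(D_{\epsilon};\omega_{\epsilon})}^p=1$ while $\|\nabla v_k\|_{L^p(D_{\epsilon};\omega_{\epsilon})}^p+\int_{D_{\epsilon}}|v_k|^p|\nabla\omega_{\epsilon}|\dd\bm x\to 0$. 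Then $(v_k)$ is bounded in $W^{1,p}(D_{\epsilon};\omega_{\epsilon})$, so by a Rellich--Kondrachov-type compact embedding into $L^p(D_{\epsilon};\omega_{\epsilon})$ — which follows from the same construction underlying the continuous embedding $W^{1,p}(D_{\epsilon};\omega_{\epsilon})\hookrightarrow L^q(D_{\epsilon};\omega_{\epsilon})$ with $q>p$ in Theorem~\ref{embed_thm} — a subsequence converges strongly in $L^p(D_{\epsilon};\omega_{\epsilon})$ to some $v$ with $\|v\|_{L^p(D_{\epsilon};\omega_{\epsilon})}^p=1$.

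Next I would identify the limit. Since $\nabla v_k\to 0$ in $L^p(D_{\epsilon};\omega_{\epsilon})$ and $\omega_{\epsilon}>0$ a.e.\ in $D_{\epsilon}$, testing against $C_c^{\infty}$ functions shows that the weak gradient of $v$ vanishes; as $D_{\epsilon}$ is connected, $v$ equals a.e.\ a constant $c$. Moreover, applying Theorem~\ref{trace_thm} to $v_k-v$ together with H\"older's inequality gives $\int_{D_{\epsilon}}\big||v_k|^p-|v|^p\big|\,|\nabla\omega_{\epsilon}|\dd\bm x\to 0$, so from $\int_{D_{\epsilon}}|v_k|^p|\nabla\omega_{\epsilon}|\dd\bm x\to0$ we obtain $\int_{D_{\epsilon}}|v|^p|\nabla\omega_{\epsilon}|\dd\bm x=0$. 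Since $|\nabla\omega_{\epsilon}|>0$ on a subset of $\Gamma_{\epsilon}$ of positive Lebesgue measure, this forces $c=0$, contradicting $\|v\|_{L^p(D_{\epsilon};\omega_{\epsilon})}^p=1$. Hence the inequality holds for each fixed $\epsilon$, with an a priori $\epsilon$-dependent constant.

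The main obstacle is upgrading this to a constant \emph{independent} of $\epsilon$, which the bare compactness argument does not yield. To address it I would pass to normal coordinates in the layer: by the $C^{1,1}$-regularity of $\partial D$, the map $(\bm x,s)\mapsto\bm x+s\bm n(\bm x)$ is a bi-Lipschitz chart of $\Gamma_{\epsilon}$ over $\partial D\times(-\epsilon,\epsilon)$ whose Jacobian is bounded above and below uniformly in $\epsilon$, and then rescale $s=\epsilon\tau$ onto the \emph{fixed} reference tube $\partial D\times(-1,1)$. Under this change of variables $\omega_{\epsilon}$ becomes an $\epsilon$-independent profile in $\tau$, the tangential components of $\nabla$ are unchanged while the normal derivative acquires a factor $\epsilon^{-1}$, and $|\nabla\omega_{\epsilon}|\dd\bm x$ transforms, up to uniformly bounded Jacobian factors, into an $\epsilon$-independent measure of the form $\psi(\tau)\dd\sigma(\bm x)\dd\tau$. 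One then proves a Poincar\'e--Friedrichs inequality on this fixed reference configuration by the compactness argument above — now with no moving domain — glues it to the classical inequality on the bulk $D_{-\epsilon}$ across the common interface $\partial D_{-\epsilon}$ (using a fixed trace estimate to control the matching), and scales back, observing that the $\epsilon^{-1}$ produced by the normal derivative only helps since $\epsilon\le\epsilon_0$. Carefully tracking these scalings — in particular checking that no hidden power of $\epsilon$ survives the gluing step — is the delicate point; the remainder is a routine combination of Theorems~\ref{trace_thm} and~\ref{embed_thm} with standard Sobolev arguments, as carried out in \cite{BurgerElvetun2017}.
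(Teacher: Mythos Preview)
The paper does not actually prove this theorem: immediately before Theorem~\ref{trace_thm} it states that Theorems~\ref{trace_thm}, \ref{embed_thm} and \ref{Poincare_thm} ``have readily been proved in \cite{BurgerElvetun2017}'' and then only records the statements. So there is no in-paper argument to compare your proposal against; the paper's ``proof'' is the citation.

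As for the substance of your sketch: the two-step strategy --- a contradiction/compactness argument to get the inequality for each fixed $\epsilon$, followed by a normal-coordinate rescaling of the tubular layer $\Gamma_{\epsilon}$ onto the fixed reference $\partial D\times(-1,1)$ to remove the $\epsilon$-dependence of the constant --- is exactly the route taken in \cite{BurgerElvetun2017}, and your description of the rescaling (Jacobian uniformly bounded by the $C^{1,1}$ regularity, $\omega_{\epsilon}$ becoming an $\epsilon$-independent profile in the stretched normal variable, the $\epsilon^{-1}$ on the normal derivative only improving things) captures the right mechanism. One presentational remark: once you have set up the rescaled reference configuration, the compactness argument already runs there with a constant that is manifestly $\epsilon$-free, so the separate ``fixed-$\epsilon$'' step is logically redundant --- you may as well do the contradiction argument directly on the reference tube glued to the bulk, which is how the cited reference organizes it. Also, your appeal to a Rellich--Kondrachov-type compact embedding in the weighted space is correct but itself nontrivial; in \cite{BurgerElvetun2017} this is established by the same flattening-and-rescaling device, so invoking it as a black box inside the fixed-$\epsilon$ step and then redoing the rescaling for uniformity duplicates work rather than introduces an error.
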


\subsection{On diffuse volume integrals}

The following two theorems readily come from Theorem 5.2 and Theorem 5.6 in \cite{BurgerElvetun2017}.

\begin{theorem}
	\label{thm1}
	Suppose that $\epsilon\in (0, \epsilon_0]$ and  $h(\bm x) \in H^1(D_{\epsilon};\omega_{\epsilon})$. Then, 
	\begin{equation*}
		\left|\int_{D_{\epsilon}} h(\bm x)\dd \omega_{\epsilon}(\bm x)-\int_D h(\bm x)\dd \bm x \right| \lesssim \epsilon^{\frac 3 2}\|h\|_{H^1(\Gamma_{\epsilon};\omega_{\epsilon})},
	\end{equation*}
	where the hidden constant is independent of $\epsilon$.
\end{theorem}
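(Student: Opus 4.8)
The plan is to confine the discrepancy to the thin tubular layer $\Gamma_\epsilon$, to exploit a vanishing zeroth moment of the transition profile $\omega_\epsilon-\chi_D$ across $\Gamma_\epsilon$, and to conclude by Cauchy--Schwarz together with the volume bound $|\Gamma_\epsilon|\lesssim\epsilon\,\mathcal H^{n-1}(\partial D)$ recorded above. For the localization, recall that $D_{-\epsilon}\subset D\subset D_\epsilon$, that $\omega_\epsilon\equiv1$ on $D_{-\epsilon}$ and $\omega_\epsilon\equiv0$ outside $D_\epsilon$, and that $D\setminus D_{-\epsilon}\subset\Gamma_\epsilon$; splitting both integrals along $D_{-\epsilon}$ yields
\[
\int_{D_\epsilon}h\,\omega_\epsilon\dd\bm x-\int_{D}h\dd\bm x=\int_{\Gamma_\epsilon}h\,\psi_\epsilon\dd\bm x,\qquad \psi_\epsilon:=\omega_\epsilon-\chi_{\{d_D<0\}}.
\]
On $\Gamma_\epsilon$ one has $|\psi_\epsilon|=\min\{\omega_\epsilon,1-\omega_\epsilon\}\le\omega_\epsilon$ (since $\omega_\epsilon\ge\frac12$ in $D$ and $\omega_\epsilon\le\frac12$ outside $D$), so this functional of $h$ is already bounded by $|\Gamma_\epsilon|^{1/2}\|h\|_{L^2(\Gamma_\epsilon;\omega_\epsilon)}\lesssim\epsilon^{1/2}\|h\|_{H^1(\Gamma_\epsilon;\omega_\epsilon)}$; in particular it is continuous on $H^1(\Gamma_\epsilon;\omega_\epsilon)$, so it suffices to prove the sharper $\epsilon^{3/2}$ estimate for $h$ in a dense class of smooth functions, the general case following by density in the weighted space.

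To capture the extra power of $\epsilon$, we straighten $\Gamma_\epsilon$ by normal (Fermi) coordinates $\bm z=\bm x+s\,\bm n(\bm x)$, $\bm x\in\partial D$, $|s|<\epsilon$, whose Jacobian $J(\bm x,s)$ satisfies $J\eqsim1$, $|J-1|\lesssim\epsilon$ and $|\partial_s J|\lesssim1$ owing to the $C^{1,1}$ regularity of $\partial D$. In these coordinates $\psi_\epsilon$ depends only on $s$, namely $\psi_\epsilon=q(s):=\frac12\bigl(1+S(-s/\epsilon)\bigr)-\chi_{\{s<0\}}$, and, since $S$ is odd, $\int_{-\epsilon}^{\epsilon}q(s)\dd s=0$. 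Writing $H(s):=h(\bm x+s\bm n(\bm x))$ and $J=1+R$ with $|R|\lesssim\epsilon$, we split the fibrewise integral into $\int_{-\epsilon}^{\epsilon}Hq\dd s$ and $\int_{-\epsilon}^{\epsilon}HqR\dd s$. For the leading piece, the primitive $Q(s):=\int_{s}^{\epsilon}q(\sigma)\dd\sigma$ vanishes at $s=\pm\epsilon$ precisely because the moment vanishes, so integration by parts gives $\int_{-\epsilon}^{\epsilon}Hq\dd s=\int_{-\epsilon}^{\epsilon}H'\,Q\dd s$; a direct estimate — using that $s\mapsto\frac12(1+S(-s/\epsilon))$ is monotone and $\ge\frac12$ on $(-\epsilon,0)$ — then yields the crucial pointwise bound $0\le Q(s)\le\epsilon\,\omega_\epsilon(s)$. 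As $|H'|\le|\nabla h|$ along the normal, this gives $\bigl|\int_{-\epsilon}^{\epsilon}Hq\dd s\bigr|\le\epsilon\int_{-\epsilon}^{\epsilon}|\nabla h|\,\omega_\epsilon\dd s$, while the remainder obeys $\bigl|\int_{-\epsilon}^{\epsilon}HqR\dd s\bigr|\lesssim\epsilon\int_{-\epsilon}^{\epsilon}|h|\,\omega_\epsilon\dd s$ from $|R|\lesssim\epsilon$ and $|q|\le\omega_\epsilon$.

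Integrating over $\bm x\in\partial D$ and changing variables back (again using $J\eqsim1$), we get
\[
\Bigl|\int_{\Gamma_\epsilon}h\,\psi_\epsilon\dd\bm x\Bigr|\lesssim\epsilon\int_{\Gamma_\epsilon}\bigl(|h|+|\nabla h|\bigr)\,\omega_\epsilon\dd\bm x\le\epsilon\Bigl(\int_{\Gamma_\epsilon}\omega_\epsilon\dd\bm x\Bigr)^{1/2}\Bigl(\int_{\Gamma_\epsilon}\bigl(|h|+|\nabla h|\bigr)^{2}\omega_\epsilon\dd\bm x\Bigr)^{1/2},
\]
and since $\int_{\Gamma_\epsilon}\omega_\epsilon\dd\bm x\le|\Gamma_\epsilon|\lesssim\epsilon\,\mathcal H^{n-1}(\partial D)$, the right-hand side is $\lesssim\epsilon^{3/2}\|h\|_{H^1(\Gamma_\epsilon;\omega_\epsilon)}$ with a constant independent of $\epsilon$, which is the assertion.

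The main obstacle — and exactly the step that upgrades the rate from $\epsilon^{1/2}$ to $\epsilon^{3/2}$ — is the weighted cancellation estimate $0\le Q(s)\le\epsilon\,\omega_\epsilon(s)$: a crude $|q|\le1$ bound only yields $\epsilon^{1/2}$, and, because the weight $\omega_\epsilon$ degenerates at $\partial D_\epsilon$, one cannot afford the unweighted $H^1$-norm of $h$, so the primitive of the profile $q$ must be shown to vanish at precisely the rate of $\omega_\epsilon$ itself. A secondary point requiring care is the reduction to smooth $h$, which relies on density of smooth functions in $H^1(D_\epsilon;\omega_\epsilon)$ together with the elementary continuity established in the first step; the curvature corrections hidden in $J$ also use the $C^{1,1}$ hypothesis but enter only as lower-order remainders.
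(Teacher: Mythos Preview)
Your argument is correct. The paper itself does not prove this statement; it simply records it as a quotation from \cite{BurgerElvetun2017} (Theorem~5.2 there), so there is no in-paper proof to compare against line by line. Your approach---localize to $\Gamma_\epsilon$, pass to Fermi coordinates, use the oddness of $S$ to force the primitive $Q$ of the profile $q=\omega_\epsilon-\chi_{\{s<0\}}$ to vanish at both ends of the fibre, and then integrate by parts in the normal direction---is exactly the mechanism behind the cited result, and your identification of the key inequality $0\le Q(s)\le\epsilon\,\omega_\epsilon(s)$ as the step that converts $\epsilon^{1/2}$ into $\epsilon^{3/2}$ is on point. The two regimes of that inequality are both clean: for $s>0$ monotonicity of $\omega_\epsilon$ gives $Q(s)=\int_s^\epsilon\omega_\epsilon\le(\epsilon-s)\,\omega_\epsilon(s)$, while for $s<0$ one has $Q(s)=\int_{-\epsilon}^s(1-\omega_\epsilon)\le\epsilon(1-\omega_\epsilon(s))\le\epsilon\,\omega_\epsilon(s)$ since $\omega_\epsilon\ge\tfrac12$ there. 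The Jacobian remainder and the density reduction are handled appropriately; note only that the oddness of $S$ (satisfied by $\tanh$) is genuinely used, so if one worked with a non-symmetric transition profile the vanishing-moment step would need to be replaced by a centering argument.
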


\begin{theorem}
	\label{thm2}
	Suppose that  $\epsilon\in (0, \epsilon_0]$, $w \in H^2(D_{\epsilon};\omega_{\epsilon})$ satisfying $w=0$ on $\partial D$ and  $v \in H^1(D_{\epsilon};\omega_{\epsilon})$. Then, there holds
	\begin{equation*}
		\int_{\Gamma_{\epsilon}} wv\left|\nabla \omega_{\epsilon}\right|\dd \bm x \lesssim \left(\epsilon^{\frac 3 2}\|w\|_{H^2(D_{\epsilon};\omega_{\epsilon})}+\epsilon^{\frac 3 2}\|w\|_{H^2(\Gamma_{\epsilon};\omega_{\epsilon})}\right)\|v\|_{H^1(D_{\epsilon};\omega_{\epsilon})},
	\end{equation*}
	where the hidden constant is independent of $\epsilon$, $u$ and $v$.
\end{theorem}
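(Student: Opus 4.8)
The plan is to pass to the tubular‑neighborhood coordinates of $\partial D$. Since $\partial D$ is $C^{1,1}$, for $\epsilon\le\epsilon_{0}$ the map $\Phi(\bm y,s)=\bm y+s\bm n(\bm y)$ with $\bm y\in\partial D$, $|s|<\epsilon$, parametrizes $\Gamma_{\epsilon}$, its Jacobian has the form $J(\bm y,s)=1+s\,P(\bm y,s)$ with $\|P\|_{L^{\infty}}\lesssim1$ and $\tfrac12\le J\le2$, and along the tube $d_{D}\circ\Phi(\bm y,s)=s$, so that $\omega_{\epsilon}\circ\Phi(\bm y,s)=\tfrac12\bigl(1+S(-s/\epsilon)\bigr)$ and $|\nabla\omega_{\epsilon}|\circ\Phi(\bm y,s)=\tfrac1{2\epsilon}|S'(-s/\epsilon)|$. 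I would first record three facts about $S=\tanh(3\,\cdot)$ that drive the argument: \emph{(a)} $|\nabla\omega_{\epsilon}|\circ\Phi(\bm y,s)=\tfrac1{\epsilon}\mu(s/\epsilon)$ for a fixed nonnegative bounded profile $\mu$ supported in $[-1,1]$ with $\int\mu<\infty$; \emph{(b)} $\int_{-\epsilon}^{\epsilon}s\,|\nabla\omega_{\epsilon}|\circ\Phi\,\dd s=0$, because $S$ odd makes $S'$ even and hence $s\,S'(-s/\epsilon)$ odd; and \emph{(c)} the pointwise comparison $\epsilon\,|\nabla\omega_{\epsilon}|\le C\,\omega_{\epsilon}$ on $D_{\epsilon}$, which for this $S$ reduces to the boundedness of $S'(t)/(1+S(t))=3(1-\tanh 3t)$ on $[-1,1]$. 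I would also use that $s\mapsto\omega_{\epsilon}\circ\Phi(\bm y,s)$ is nonincreasing with value $\tfrac12$ at $s=0$, so that $\omega_{\epsilon}\circ\Phi(\bm y,s)\le2\,\omega_{\epsilon}\circ\Phi(\bm y,\tau)$ whenever $\tau$ lies between $0$ and $s$.

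Rewriting $\int_{\Gamma_{\epsilon}}wv|\nabla\omega_{\epsilon}|\dd\bm x=\int_{\partial D}\int_{-\epsilon}^{\epsilon}(w\circ\Phi)(v\circ\Phi)\,|\nabla\omega_{\epsilon}|\circ\Phi\,J\,\dd s\,\dd\sigma(\bm y)$ and using that $w|_{\partial D}=0$ means $w\circ\Phi(\bm y,0)=0$ (meaningful since $\omega_{\epsilon}\ge\tfrac12$ on $D$ forces $w\in H^{2}(D)$), I would expand to second order in $s$, $w\circ\Phi(\bm y,s)=s\,\partial_{\bm n}w(\bm y)+R_{w}(\bm y,s)$ with $R_{w}(\bm y,s)=\int_{0}^{s}(s-\tau)\,\partial^{2}_{\tau}(w\circ\Phi)(\bm y,\tau)\,\dd\tau$, and split the integral accordingly into a linear part $A$ and a remainder part $B$. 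Since $\Phi$ is affine in $s$, one has $\partial^{2}_{\tau}(w\circ\Phi)=\bm n^{\top}(D^{2}w\circ\Phi)\bm n$ and $\partial_{s}(v\circ\Phi)=(\nabla v\circ\Phi)\cdot\bm n$, so no curvature terms enter the normal derivatives and they are dominated by $|D^{2}w\circ\Phi|$ and $|\nabla v\circ\Phi|$ respectively.

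For $B$ the Cauchy--Schwarz bound $|R_{w}(\bm y,s)|^{2}\le\tfrac{|s|^{3}}{3}\int_{0}^{s}|\partial^{2}_{\tau}(w\circ\Phi)|^{2}\,\dd\tau$, combined with the monotonicity of $\omega_{\epsilon}$ along normals, gives the weighted estimate $\omega_{\epsilon}\circ\Phi(\bm y,s)\,|R_{w}(\bm y,s)|^{2}\lesssim|s|^{3}\int_{-\epsilon}^{\epsilon}|\partial^{2}_{\tau}(w\circ\Phi)|^{2}\,\omega_{\epsilon}\circ\Phi\,\dd\tau$. Applying Cauchy--Schwarz in $(s,\bm y)$ against the weight $|\nabla\omega_{\epsilon}|\circ\Phi\,J$, using \emph{(c)} to replace $|\nabla\omega_{\epsilon}|$ by $\tfrac1\epsilon\omega_{\epsilon}$ and $\int_{-\epsilon}^{\epsilon}|s|^{3}\dd s\lesssim\epsilon^{4}$, one obtains $\int_{\partial D}\int_{-\epsilon}^{\epsilon}|R_{w}|^{2}|\nabla\omega_{\epsilon}|\circ\Phi\,J\,\dd s\,\dd\sigma\lesssim\epsilon^{3}\|w\|^{2}_{H^{2}(\Gamma_{\epsilon};\omega_{\epsilon})}$, while the companion factor $\int_{\Gamma_{\epsilon}}|v|^{2}|\nabla\omega_{\epsilon}|\dd\bm x\lesssim\|v\|^{2}_{H^{1}(D_{\epsilon};\omega_{\epsilon})}$ is exactly Theorem~\ref{trace_thm}. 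Hence $|B|\lesssim\epsilon^{3/2}\|w\|_{H^{2}(\Gamma_{\epsilon};\omega_{\epsilon})}\|v\|_{H^{1}(D_{\epsilon};\omega_{\epsilon})}$.

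The delicate term is $A=\int_{\partial D}\partial_{\bm n}w(\bm y)\bigl(\int_{-\epsilon}^{\epsilon}s\,(v\circ\Phi)\,|\nabla\omega_{\epsilon}|\circ\Phi\,J\,\dd s\bigr)\dd\sigma(\bm y)$. Here I would split $v\circ\Phi(\bm y,s)=v(\bm y)+r_{v}(\bm y,s)$ at $s=0$: the $v(\bm y)$‑part multiplies $\int_{-\epsilon}^{\epsilon}s\,|\nabla\omega_{\epsilon}|\circ\Phi\,J\,\dd s$, whose principal ($J\equiv1$) piece vanishes by \emph{(b)} and whose remainder is $O(\epsilon^{2})$ because $|J-1|\lesssim|s|$ and $\int_{-\epsilon}^{\epsilon}s^{2}|\nabla\omega_{\epsilon}|\circ\Phi\,\dd s\lesssim\epsilon^{2}$; the $r_{v}$‑part is handled exactly like $B$ and is $\lesssim\epsilon^{3/2}\bigl(\int_{-\epsilon}^{\epsilon}|\partial_{s}(v\circ\Phi)|^{2}\,\omega_{\epsilon}\circ\Phi\,\dd s\bigr)^{1/2}$ for each $\bm y$. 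A Cauchy--Schwarz over $\partial D$ then gives $|A|\lesssim\epsilon^{2}\|\partial_{\bm n}w\|_{L^{2}(\partial D)}\|v\|_{L^{2}(\partial D)}+\epsilon^{3/2}\|\partial_{\bm n}w\|_{L^{2}(\partial D)}\|\nabla v\|_{L^{2}(\Gamma_{\epsilon};\omega_{\epsilon})}$. The crucial step is to control the two traces by the \emph{$\epsilon$-uniform} inequalities $\|v\|_{L^{2}(\partial D)}\lesssim\|v\|_{H^{1}(D;\omega_{\epsilon})}\le\|v\|_{H^{1}(D_{\epsilon};\omega_{\epsilon})}$ and $\|\partial_{\bm n}w\|_{L^{2}(\partial D)}\lesssim\|w\|_{H^{2}(D;\omega_{\epsilon})}\le\|w\|_{H^{2}(D_{\epsilon};\omega_{\epsilon})}$, which follow from the classical trace theorem on the fixed domain $D$ (legitimate since $\omega_{\epsilon}\ge\tfrac12$ there), rather than by any localized trace over $\Gamma_{\epsilon}$, which would lose a factor $\epsilon^{-1/2}$. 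Since $\epsilon\le\epsilon_{0}\le1$ yields $\epsilon^{2}\le\epsilon^{3/2}$, this gives $|A|\lesssim\epsilon^{3/2}\|w\|_{H^{2}(D_{\epsilon};\omega_{\epsilon})}\|v\|_{H^{1}(D_{\epsilon};\omega_{\epsilon})}$, and adding the bounds for $A$ and $B$ finishes the proof. I expect the main obstacle to be precisely this term $A$: a direct estimate of the linear part only gives $O(\epsilon)$, and recovering the optimal power $\epsilon^{3/2}$ requires simultaneously exploiting the odd‑symmetry cancellation $\int_{-\epsilon}^{\epsilon}s\,|\nabla\omega_{\epsilon}|\circ\Phi\,\dd s=0$ and the sharp non‑localized trace bound for $\partial_{\bm n}w$; keeping every intermediate quantity in the weighted norms via \emph{(c)} and the monotonicity of $\omega_{\epsilon}$ along normals is the other recurring technical point.
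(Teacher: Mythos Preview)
The paper does not supply its own proof of this statement; it is quoted verbatim from Theorem~5.6 of Burger, Elvetun and Schlottbom (the reference \texttt{BurgerElvetun2017} in the paper), so there is no in-paper argument to compare against.

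Your reconstruction is correct and is the natural route to such an estimate. The essential mechanism---passing to tubular coordinates, writing the second-order normal Taylor expansion $w\circ\Phi(\bm y,s)=s\,\partial_{\bm n}w(\bm y)+R_w(\bm y,s)$ (legitimate because $w|_{\partial D}=0$), killing the principal part of the linear term through the odd-symmetry cancellation $\int_{-\epsilon}^{\epsilon}s\,|\nabla\omega_{\epsilon}|\circ\Phi\,\dd s=0$, and controlling the resulting traces $\|\partial_{\bm n}w\|_{L^2(\partial D)}$ and $\|v\|_{L^2(\partial D)}$ by the classical trace theorem on the \emph{fixed} domain $D$ (where $\omega_{\epsilon}\ge\tfrac12$), not by a localized trace over $\Gamma_{\epsilon}$---is exactly what is needed to land on $\epsilon^{3/2}$ rather than $\epsilon$. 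The only place one must be slightly careful, and you handle it correctly, is the weighted estimate for the remainder $R_w$: since only $\int|\partial_\tau^2(w\circ\Phi)|^2\omega_{\epsilon}\,\dd\tau<\infty$ is available near $s=\epsilon$, the passage from the pointwise Cauchy--Schwarz bound $|R_w|^2\le\tfrac{|s|^3}{3}\int_0^s|\partial_\tau^2(w\circ\Phi)|^2\,\dd\tau$ to the weighted version genuinely requires the monotonicity relation $\omega_{\epsilon}(s)\le2\,\omega_{\epsilon}(\tau)$ for $\tau$ between $0$ and $s$, which you invoke.
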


Next, let us introduce the smooth condition for  $f$ and some regularity condition required for the exact solution $u$ in order to carry out the convergence and error analysis of the diffuse domain method. For simplicity of expressions, in the following analysis we will not distinguish the exact solution $u(t)$, the source function $f(t)$, the Neumann  boundary value $g(t)$, the diffusion coefficient $A$ with their extensions  in $D_\epsilon$ since there are no ambiguities.

\begin{assumption}
	\label{assumption1}
	The extension of the source function $f(t,\bm x)$ to $D_{\epsilon}$ satisfies the following regularity condition:
	\begin{subequations}
		\begin{align}
			\label{smooth_cond1}
			\sup\limits_{0\leq t \leq T}\|f(t,\bm x)\|_{L^2(D_{\epsilon};\omega_{\epsilon})} &\lesssim 1,\\
			\label{smooth_cond2}
			\sup\limits_{0\leq t \leq T}\|f(t,\bm x)\|_{H^1(\Gamma_{\epsilon};\omega_{\epsilon})}&\lesssim 1,
		\end{align}
	\end{subequations}
	where the hidden constant may depend on the terminal time $T$.
\end{assumption}
\begin{assumption}
	\label{regularity_assump}
	The extension of the exact solution $u(t)$ of \eqref{eq2-2}   to $D_\epsilon$ satisfies the following regularity conditions:
	\begin{subequations}
		\begin{align}
			\label{regularity1}
			\sup\limits_{0\leq t \leq T}\|u(t)\|_{H^2(D_{\epsilon};\omega_{\epsilon})} &\lesssim 1,\\
			\label{regularity2}
			\sup\limits_{0\leq t \leq T}\|u_t(t)\|_{W^{1,\infty}(D_{\epsilon})}&\lesssim 1,
		\end{align}
	\end{subequations}
	where the hidden constants may depend on the terminal time  $T$.
\end{assumption}

\begin{lemma}
	\label{lemma1}
	Suppose that  $0<\epsilon\leq \epsilon_0$, $w\in H^1(D_{\epsilon};\omega_{\epsilon})\cap L^{\infty}(D_{\epsilon})$, and $v\in H^1(D_{\epsilon};\omega_{\epsilon})$. Then
	\begin{equation*}
		\left|\int_{D_{\epsilon}}w v\omega_{\epsilon}\dd \bm x-\int_D w v\dd \bm x \right|\lesssim \epsilon^{\frac 1 2} \|w\|_{L^2(\Gamma_{\epsilon};\omega_{\epsilon})}\|v\|_{L^2(\Gamma_{\epsilon};\omega_{\epsilon})},
	\end{equation*}
	where the hidden constant  is independent of $\epsilon$, $u$ and $v$.
\end{lemma}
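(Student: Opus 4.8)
\textbf{Proof proposal for Lemma \ref{lemma1}.}

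The plan is to reduce the claimed estimate to a form where Theorem \ref{thm1} can be applied directly, exploiting that the integrand on the difference of domains is supported in the tubular neighborhood $\Gamma_\epsilon$. First I would observe that since $\omega_\epsilon \equiv 1$ on $D_{-\epsilon}$ and $\omega_\epsilon \equiv 0$ outside $D_\epsilon$, the two integrals agree on $D_{-\epsilon}$, so that
\begin{equation*}
	\int_{D_{\epsilon}} w v\,\omega_{\epsilon}\dd \bm x - \int_D w v\dd \bm x = \int_{\Gamma_{\epsilon}} w v\,\omega_{\epsilon}\dd \bm x - \int_{D\cap\Gamma_{\epsilon}} w v\dd \bm x.
\end{equation*}
Thus it suffices to bound each of the two terms on the right-hand side individually by $\epsilon^{1/2}\|w\|_{L^2(\Gamma_{\epsilon};\omega_{\epsilon})}\|v\|_{L^2(\Gamma_{\epsilon};\omega_{\epsilon})}$, or alternatively to apply Theorem \ref{thm1} to the product $h = wv$ after controlling its $H^1(\Gamma_\epsilon;\omega_\epsilon)$-norm — but the latter route would require $w,v\in H^1$ with product estimates and would give $\epsilon^{3/2}$, which is stronger than needed; the point of this lemma is precisely to get a cruder $\epsilon^{1/2}$ bound under weaker ($L^2$-type) hypotheses on the product, using only boundedness of $w$.

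The key steps, in order: (i) Write the difference as an integral over $\Gamma_\epsilon$ as above. (ii) For the term $\int_{\Gamma_\epsilon} wv\,\omega_\epsilon\dd\bm x$, apply the Cauchy--Schwarz inequality with respect to the weighted measure $\omega_\epsilon\dd\bm x$ to get $\le \|w\|_{L^2(\Gamma_\epsilon;\omega_\epsilon)}\|v\|_{L^2(\Gamma_\epsilon;\omega_\epsilon)}$; this already has the right-hand-side form but without the $\epsilon^{1/2}$ gain, so I must instead peel off $w$ in $L^\infty$: estimate $|\int_{\Gamma_\epsilon} wv\,\omega_\epsilon\dd\bm x|\le \|w\|_{L^\infty(D_\epsilon)}\int_{\Gamma_\epsilon}|v|\,\omega_\epsilon\dd\bm x$ and then use Cauchy--Schwarz on $\int_{\Gamma_\epsilon}|v|\,\omega_\epsilon\dd\bm x\le \big(\int_{\Gamma_\epsilon}\omega_\epsilon\dd\bm x\big)^{1/2}\|v\|_{L^2(\Gamma_\epsilon;\omega_\epsilon)}$. (iii) Bound $\int_{\Gamma_\epsilon}\omega_\epsilon\dd\bm x \le |\Gamma_\epsilon|\lesssim \epsilon\,\mathcal H^{n-1}(\partial D)\lesssim \epsilon$ using the measure estimate for $\Gamma_\epsilon$ recorded in Section \ref{pre} together with $0\le\omega_\epsilon\le 1$; this produces the factor $\epsilon^{1/2}$. (iv) Handle $\int_{D\cap\Gamma_\epsilon} wv\dd\bm x$ the same way, noting $D\cap\Gamma_\epsilon = D\setminus\overline{D_{-\epsilon}}\subset\Gamma_\epsilon$ and that there $\omega_\epsilon\ge \tfrac12$, so $1\le 2\omega_\epsilon$ on this set and hence $\int_{D\cap\Gamma_\epsilon}|wv|\dd\bm x \le 2\|w\|_{L^\infty}\int_{\Gamma_\epsilon}|v|\,\omega_\epsilon\dd\bm x$, to which step (iii) applies verbatim. (v) Combine, absorbing $\|w\|_{L^\infty(D_\epsilon)}$ and $\mathcal H^{n-1}(\partial D)$ into the hidden constant, which is legitimate since these are fixed data independent of $\epsilon$.

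I do not expect a serious obstacle here; the only mild subtlety is bookkeeping the region where the two integrands disagree and verifying $1\le 2\omega_\epsilon$ on $D\setminus\overline{D_{-\epsilon}}$, which follows from the stated properties of $\omega_\epsilon$ (namely $\tfrac12\le\omega_\epsilon\le 1$ on $D\setminus D_{-\epsilon}$). One should also double-check that hiding $\|w\|_{L^\infty(D_\epsilon)}$ in the constant is consistent with the lemma's claim that the hidden constant is ``independent of $\epsilon$, $u$ and $v$'' — strictly the constant does depend on $\|w\|_{L^\infty}$, so either the statement intends $w$ to be among the fixed problem data (e.g. $w=A$ or $w$ a fixed extension) or the $L^\infty$-norm of $w$ should appear explicitly on the right; I would phrase the proof so that the $\|w\|_{L^\infty(D_\epsilon)}$ dependence is made transparent and note that for the intended applications (where $w$ is the diffusion coefficient or a similarly controlled quantity) it is uniformly bounded.
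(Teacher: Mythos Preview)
Your argument establishes the bound
\[
\left|\int_{D_{\epsilon}}w v\,\omega_{\epsilon}\dd \bm x-\int_D w v\dd \bm x \right|\lesssim \epsilon^{1/2}\,\|w\|_{L^\infty(D_\epsilon)}\,\|v\|_{L^2(\Gamma_{\epsilon};\omega_{\epsilon})},
\]
but this is \emph{not} the inequality stated in the lemma, and the difference is not just cosmetic. The right-hand side of the lemma carries $\|w\|_{L^2(\Gamma_{\epsilon};\omega_{\epsilon})}$, which for bounded $w$ is itself $O(\epsilon^{1/2})$ because $|\Gamma_\epsilon|\lesssim\epsilon$. Hence the lemma's bound is of order $\epsilon\,\|w\|_{L^\infty}\|v\|_{L^2(\Gamma_\epsilon;\omega_\epsilon)}$, a full factor $\epsilon^{1/2}$ stronger than what your steps (ii)--(iv) deliver. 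This extra half-power is exactly what the downstream applications need: in Lemma~\ref{lemma3} and Theorem~\ref{thm5} the authors use Young's inequality on $\epsilon^{1/2}\|w\|_{L^2(\Gamma_\epsilon;\omega_\epsilon)}\|v\|$ together with $\|w\|_{L^2(\Gamma_\epsilon;\omega_\epsilon)}^2\lesssim\epsilon$ to produce $C\epsilon^2$; your bound would only give $C\epsilon$ there. So ``hiding $\|w\|_{L^\infty}$ in the constant'' is not the issue --- the issue is that you have replaced an $\epsilon$-small norm by an $O(1)$ norm.

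The reason your route cannot recover the missing $\epsilon^{1/2}$ is that you bound the two pieces $\int_{\Gamma_\epsilon}wv\,\omega_\epsilon$ and $\int_{D\cap\Gamma_\epsilon}wv$ separately; each piece individually is genuinely of size $\|w\|_{L^2(\Gamma_\epsilon;\omega_\epsilon)}\|v\|_{L^2(\Gamma_\epsilon;\omega_\epsilon)}$ with no extra smallness, and the gain in the lemma comes from their \emph{cancellation}. The paper's proof exploits this by first writing the difference as $\int_{D\setminus D_{-\epsilon}}(w-w/\omega_\epsilon)v\,\omega_\epsilon + \int_{D_\epsilon\setminus D}wv\,\omega_\epsilon$, then applying Cauchy--Schwarz in the weighted measure to extract $\|v\|_{L^2(\Gamma_\epsilon;\omega_\epsilon)}$, and finally showing that the remaining $w$-factor squared equals $I_1+I_2$ with $I_1=\int_{\Gamma_\epsilon}|w|^2\omega_\epsilon-\int_{D\setminus D_{-\epsilon}}|w|^2$ and $I_2=\int_{D\setminus D_{-\epsilon}}|w|^2(1/\omega_\epsilon-1)$. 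The term $I_1$ is precisely the quantity controlled by Theorem~\ref{thm1} with $h=|w|^2$ (this is where $w\in H^1$ is used), and $I_2$ is handled by a layer-cake/coarea computation using $w\in L^\infty$. Both ingredients --- the application of Theorem~\ref{thm1} to $|w|^2$ and the algebraic regrouping that produces $I_1,I_2$ --- are absent from your plan and are what make the sharper bound work.
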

\begin{proof}
	According to the definition of $D_{\epsilon}$ and $\omega_{\epsilon}$, we can rewrite 
	\begin{equation}\label{lemma1-1}
		\begin{aligned}
			& \int_{D_{\epsilon}}w v\omega_{\epsilon} \dd \bm x-\int_D w v\dd \bm x\\
			&\quad =\int_{D_{-\epsilon}}wv\omega_{\epsilon} \dd \bm x+\int_{D_{\epsilon}\setminus D_{-\epsilon}}wv\omega_{\epsilon}\dd \bm x-\int_{D_{-\epsilon}}wv\dd \bm x-\int_{D\setminus D_{-\epsilon}} wv\dd \bm x\\
			&\quad =\int_{D\setminus D_{-\epsilon}}wv\omega_{\epsilon}\dd \bm x+\int_{D_{\epsilon}\setminus D}wv\omega_{\epsilon}\dd \bm x-\int_{D\setminus D_{-\epsilon}} wv\dd \bm x\\
			&\quad =\int_{D\setminus D_{-\epsilon}} \left(w-w/\omega_{\epsilon}\right)v\omega_{\epsilon}\dd \bm x+\int_{D_{\epsilon}\setminus D}wv\omega_{\epsilon}\dd \bm x.
		\end{aligned}
	\end{equation}
	By applying  the Cauchy-Schwarz inequality and the triangle inequality, \eqref{lemma1-1} can be bounded by
	\begin{equation}\label{lemma1-11}
		\begin{aligned}
			&\int_{D_{\epsilon}}w v\omega_{\epsilon} \dd \bm x-\int_D w v\dd \bm x\\
			&\quad \leq \left(\int_{D\setminus D_{-\epsilon}}\left|w-w/\omega_{\epsilon} \right|^2\omega_{\epsilon}\dd \bm x \right)^{\frac 1 2}\left(\int_{D\setminus D_{-\epsilon}}|v|^2\omega_{\epsilon}\dd \bm x\right)^{\frac 1 2}\\
			&\quad\quad +\left(\int_{D_{\epsilon}\setminus D}|w|^2\omega_{\epsilon}\dd \bm x\right)^{\frac 1 2}\left(\int_{D_{\epsilon}\setminus D} |v|^2\omega_{\epsilon}\dd \bm x\right)^{\frac 1 2}\\
			&\quad \lesssim  \left(\int_{\Gamma_{\epsilon}} |v|^2\omega_{\epsilon} \dd \bm x\right)^{\frac 1 2}\left(\int_{D\setminus D_{-\epsilon}}\left|w-w/\omega_{\epsilon} \right|^2\omega_{\epsilon}\dd \bm x+\int_{D_{\epsilon}\setminus D} |w|^2\omega_{\epsilon}\dd\bm x \right)^{\frac 1 2}\\
			&\quad \lesssim \left(\int_{\Gamma_{\epsilon}} |v|^2\omega_{\epsilon} \dd \bm x\right)^{\frac 1 2} \Big({\rm{I}}_1+{\rm{I}}_2 \Big)^{\frac 1 2}
		\end{aligned}
	\end{equation}
	where
	$${\rm{I}}_1:=\int_{D_{\epsilon}\setminus D_{-\epsilon}} |w|^2\omega_{\epsilon}\dd \bm x - \int_{D\setminus D_{-\epsilon}}|w|^2\dd \bm x,$$
	$${\rm{I}}_2:=\int_{D\setminus D_{-\epsilon}}|w|^2/\omega_{\epsilon}\dd \bm x-\int_{D\setminus D_{-\epsilon}}|w|^2\dd \bm x. $$
	
	As for the estimate of $\rm{I}_1$, with the help of Theorem \ref{thm1}, we can derive that
	\begin{align}
		\label{lemma1-2}
		|{\rm{I}_1}| &= \left|\int_{D_{\epsilon}}|w|^2\omega_{\epsilon}\dd \bm x-\int_D|w|^2\dd \bm x\right|\lesssim \epsilon^{\frac 3 2}\|w^2\|_{H^1(\Gamma_{\epsilon};\omega_{\epsilon})}.
	\end{align}
	
	As for the estimate of $\rm{I}_2$, recalling the definition of $D_{\epsilon}$ and $\omega_{\epsilon}$, we can derive that
	{\footnotesize\begin{equation}\label{lemma1-3}
			\begin{aligned}
				|{\rm{I}_2}|=&\left|\int_{-\epsilon}^0 \frac{\frac{6}{\epsilon}S'\left(-\frac{3s}{\epsilon}\right)}{\left(1+S\left(-\frac{3s}{\epsilon}\right)\right)^2}\int_{\{d_D(\bm x)<s\}}|w|^2\dd\bm x\dd s -\int_{-\epsilon}^0\frac{\frac {6}{\epsilon}S'\left(-\frac{3s}{\epsilon}\right)}{\left(1+S\left(-\frac{3s}{\epsilon}\right)\right)^2}\int_{\{d_D(\bm x)<0\}}|w|^2\dd\bm x\dd s  \right|\\
				=& \left|\int_{-\epsilon}^0 \frac{\frac{6}{\epsilon}S'\left(-\frac{3s}{\epsilon}\right)}{\left(1+S\left(-\frac{3s}{\epsilon}\right)\right)^2}\int_{\{s<d_D(\bm x)<0\}}|w|^2\dd\bm x\dd s\right|\\
				\lesssim&\, \frac{1}{\epsilon}\left(\int_{-\epsilon}^0 S'\left(-\frac{3s}{\epsilon}\right)\int_{\{s<d_D(\bm x)<0\}}1\dd \bm x \dd s\right)^{\frac 1 2}\left(\int_{-\epsilon}^0 S'\left(-\frac{3s}{\epsilon}\right)\int_{\{s<d_D(\bm x)<0\}}|w|^4\dd \bm x\dd s \right)^{\frac 1 2}\\
				\lesssim&\, \epsilon^{\frac 1 2}\left(\int_{-\epsilon}^0 \frac{1}{\epsilon} S'\left(-\frac{3s}{\epsilon}\right)\int_{\{s<d_D(\bm x)<0 \}}|w|^4\dd\bm x\dd s \right)^{\frac 1 2}\\
				\lesssim&\, \epsilon^{\frac 1 2}\left(\int_0^1\int_{\{-s<\varphi^{\epsilon}<0 \}}|w|^4\dd \bm x\dd s \right)^{\frac 1 2},
			\end{aligned}
	\end{equation}}
	where the last three inequalities use the Cauchy-Schwarz inequality and the boundness of $S'(\cdot)$. Since $w \in L^{\infty}(D_{\epsilon})$, \eqref{lemma1-3} can be reformulated as
	\begin{align}
		\label{lemma1-4}
		{\rm{I}_2} &\lesssim \epsilon^{\frac 1 2}\|w\|_{L^2(\Gamma_{\epsilon};\omega_{\epsilon})}.
	\end{align}
	
	Combining \eqref{lemma1-11}, \eqref{lemma1-2}, \eqref{lemma1-4}, we finally obtain
	\begin{equation*}
		\left|\int_{D_{\epsilon}} wv\omega_{\epsilon}\dd \bm x-\int_D wv\dd \bm x \right|\lesssim \epsilon^{\frac 1 2}\|w\|_{L^2(\Gamma_{\epsilon};\omega_{\epsilon})}\|v\|_{L^2(\Gamma_{\epsilon};\omega_{\epsilon})},
	\end{equation*}
	which complete the proof.	
	
\end{proof}

\section{Error analysis}\label{theory}
To illustrate the convergence of diffuse domain method, let us combine the \eqref{eq2-2} and \eqref{weighted_variational} together, then we can get the error equation as follows: for all $v \in H^1(D_{\epsilon};\omega_{\epsilon})$,
\begin{equation}
	\label{eq4-1}
	\begin{split}
		\left(u_t^{\epsilon}-u_t,v\right)_{D_{\epsilon};\omega_{\epsilon}}+a^{\epsilon}(u^{\epsilon}-u,v)
		=&[\left(u_t,v\right)-\left(u_t,v\right)_{D_{\epsilon};\omega_{\epsilon}}]+[a(u,v)-a^{\epsilon}(u,v)]\\
		&+[\ell^{\epsilon}(v)-\ell(v)].
	\end{split}
\end{equation}
Next we will derive the error estimates of $u^{\epsilon}-u$ in the $L^2$ and $H^1$ norms.

\subsection{Optimal error estimate in the $L^2$-norm}

\begin{theorem}[Error estimate in the $L^2$ norm]
	\label{thm3}
	Suppose that $0<\epsilon\leq \epsilon_0$,  $g(t)\in H^2(D_{\epsilon};\omega_{\epsilon})$ for $t\in (0,T]$, $\kappa\leq A(\bm x)\leq \kappa^{-1}$ for all $\bm x \in D_{\epsilon}$ with some constant $\kappa>0$, and $f$ satisfies  Assumption \ref{assumption1}. Assume the exact solution $u\in L^2(0,T;H^2(D))$ and its extension  to $D_\epsilon$ fulfills Assumption  \ref{regularity_assump}. Then we have
	\begin{equation}
		\label{thm3-1}
		\begin{split}
			\|u^{\epsilon}(t)-u(t)\|_{L^2(D_{\epsilon};\omega_{\epsilon})}\lesssim \epsilon^2, \quad \forall \,0\leq t \leq T,
		\end{split}
	\end{equation}
	where the hidden constant is independent of $\epsilon$.
\end{theorem}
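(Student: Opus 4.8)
The plan is to run a standard energy argument on the error equation \eqref{eq4-1}, with essentially all of the work going into a consistency estimate for its right‑hand side. Write $e:=u^\epsilon-u$ and let $R(v)$ denote the right‑hand side of \eqref{eq4-1}, i.e.
\[
R(v)=\bigl[(u_t,v)-(u_t,v)_{D_\epsilon;\omega_\epsilon}\bigr]+\bigl[a(u,v)-a^\epsilon(u,v)\bigr]+\bigl[\ell^\epsilon(v)-\ell(v)\bigr].
\]
Testing \eqref{eq4-1} with $v=e$ and using the coercivity $a^\epsilon(e,e)\ge\kappa\|\nabla e\|_{L^2(D_\epsilon;\omega_\epsilon)}^2$, which is immediate from $\kappa\le A\le\kappa^{-1}$ on $D_\epsilon$, yields
\[
\tfrac12\tfrac{\mathrm d}{\mathrm dt}\|e(t)\|_{L^2(D_\epsilon;\omega_\epsilon)}^2+\kappa\|\nabla e(t)\|_{L^2(D_\epsilon;\omega_\epsilon)}^2\le R(e).
\]
The crux of the proof is then the consistency bound $|R(v)|\lesssim\epsilon^{2}\,\|v\|_{H^1(D_\epsilon;\omega_\epsilon)}$ for all $v\in H^1(D_\epsilon;\omega_\epsilon)$, uniformly in $t\in[0,T]$. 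Granting it, Young's inequality absorbs $\tfrac{\kappa}{2}\|\nabla e\|_{L^2(D_\epsilon;\omega_\epsilon)}^2$ into the left‑hand side and leaves $\tfrac{\mathrm d}{\mathrm dt}\|e\|_{L^2(D_\epsilon;\omega_\epsilon)}^2\lesssim\epsilon^{4}+\|e\|_{L^2(D_\epsilon;\omega_\epsilon)}^2$; since the diffuse and exact problems share the extended initial datum, $\|e(0)\|_{L^2(D_\epsilon;\omega_\epsilon)}=0$, and Grönwall's lemma gives $\|e(t)\|_{L^2(D_\epsilon;\omega_\epsilon)}^2\lesssim\epsilon^4$ with the hidden constant depending only on $T$, which is \eqref{thm3-1}.

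To prove the consistency bound I would first expose an algebraic cancellation that collapses $R(v)$ to pure boundary‑layer integrals. Integrating $a(u,v)$ by parts over $D$ and using the PDE $u_t=\nabla\cdot(A\nabla u)+f$ together with the Neumann condition $(A\nabla u)\cdot\bm n=g$ on $\partial D$ gives $a(u,v)=-(u_t,v)+(f,v)+(g,v)_{\partial D}$; integrating $a^\epsilon(u,v)$ by parts over $D_\epsilon$ and using $\omega_\epsilon=0$ on $\partial D_\epsilon$ gives $a^\epsilon(u,v)=-(\nabla\cdot(A\nabla u),v)_{D_\epsilon;\omega_\epsilon}-(A\nabla u\cdot\nabla\omega_\epsilon,v)_{D_\epsilon}$. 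Substituting these into $R(v)$, the bulk contributions involving $u_t$, $f$ and the sharp boundary integral $(g,v)_{\partial D}$ cancel, where one uses the identity $\nabla\omega_\epsilon=-|\nabla\omega_\epsilon|\,\bm n$ on $\Gamma_\epsilon$ to recombine the $\nabla\omega_\epsilon$‑term coming from $a^\epsilon$ with the diffuse boundary term of $\ell^\epsilon$. What survives is
\[
R(v)=\int_{D_\epsilon\setminus D}\!\bigl(\nabla\cdot(A\nabla u)-u_t+f\bigr)v\,\omega_\epsilon\dd\bm x+\int_{\Gamma_\epsilon}\!\bigl(\widetilde g-(A\nabla u)\cdot\bm n\bigr)v\,|\nabla\omega_\epsilon|\dd\bm x .
\]
Both integrands are small on the thin layer $\Gamma_\epsilon$: the first factor is the residual of the extended exact solution and vanishes on $\overline D$, hence is $O(d_D)=O(\epsilon)$ on $D_\epsilon\setminus D$ by the reflection extension and Assumption \ref{regularity_assump}, while the second factor $\widetilde g-(A\nabla u)\cdot\bm n$ vanishes on $\partial D$ precisely because $\widetilde g$ is the normal‑constant extension of $g=(A\nabla u)\cdot\bm n$.

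For the first integral I would write it as $\int_{D_\epsilon}r\,v\,\omega_\epsilon=\int_{D_\epsilon}rv\,\omega_\epsilon-\int_D rv$ with $r:=\nabla\cdot(A\nabla u)-u_t+f$ (which is $0$ on $D$) and invoke Lemma \ref{lemma1}: since $|r|\lesssim\epsilon$ and $\int_{D_\epsilon\setminus D}\omega_\epsilon\lesssim|\Gamma_\epsilon|\lesssim\epsilon$ one has $\|r\|_{L^2(\Gamma_\epsilon;\omega_\epsilon)}\lesssim\epsilon^{3/2}$, so this term is $\lesssim\epsilon^{1/2}\|r\|_{L^2(\Gamma_\epsilon;\omega_\epsilon)}\|v\|_{L^2(\Gamma_\epsilon;\omega_\epsilon)}\lesssim\epsilon^{2}\|v\|_{H^1(D_\epsilon;\omega_\epsilon)}$. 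For the second integral, a direct use of Theorem \ref{thm2} (legitimate since $\widetilde g-(A\nabla u)\cdot\bm n=0$ on $\partial D$ and $g\in H^2(D_\epsilon;\omega_\epsilon)$) gives only $O(\epsilon^{3/2}\|v\|_{H^1(D_\epsilon;\omega_\epsilon)})$; the missing half power of $\epsilon$ comes from the symmetry of the mollifier $S(s)=\tanh(3s)$. Because $S$ is odd, $|\nabla\omega_\epsilon|$ is an even function of the signed distance along each normal ray, so its first moment $\int_{-\epsilon}^{\epsilon}s\,|\nabla\omega_\epsilon|\dd s$ vanishes (up to an $O(\epsilon^{2})$ curvature correction). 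Writing $\widetilde g-(A\nabla u)\cdot\bm n=d_D\,m$ on $\Gamma_\epsilon$ and integrating by parts in the normal variable — so that no trace of $\nabla v$ is taken — combines this first‑moment cancellation with the vanishing on $\partial D$ and upgrades the bound to $O(\epsilon^{2}\|v\|_{H^1(D_\epsilon;\omega_\epsilon)})$; the constants are kept uniform in $\epsilon$ via Theorems \ref{trace_thm}, \ref{embed_thm} and \ref{Poincare_thm} and the $C^{1,1}$ regularity of $\partial D$.

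The step I expect to be the main obstacle is exactly this last half power: the generic diffuse boundary‑layer estimate stops at $\epsilon^{3/2}$, so reaching the optimal $\epsilon^{2}$ requires a sharpened layer lemma that simultaneously uses the vanishing of the boundary mismatch on $\partial D$ and the oddness of $S$, and that transfers the layer norms of an \emph{arbitrary} test function $v\in H^1(D_\epsilon;\omega_\epsilon)$ back to $\|v\|_{H^1(D_\epsilon;\omega_\epsilon)}$ without reintroducing negative powers of $\epsilon$; this is also where the hidden constants get decoupled from $\epsilon$, as advertised in the introduction. A secondary technical point is making the $O(\epsilon)$ bound on the extension residual $r$ and the application of Lemma \ref{lemma1} to it rigorous, which relies on the precise reflection construction of the extensions together with Assumptions \ref{assumption1}--\ref{regularity_assump}; if the exact solution is instead extended as a solution of the same parabolic equation on a slightly enlarged domain this residual vanishes identically and only the boundary‑mismatch integral remains.
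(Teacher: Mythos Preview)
Your overall architecture—test the error equation with $v=e$, use coercivity of $a^\epsilon$, reduce everything to a consistency bound $|R(v)|\lesssim\epsilon^2\|v\|_{H^1(D_\epsilon;\omega_\epsilon)}$, then Young and Gr\"onwall—is exactly what the paper does. Your algebraic collapse of $R(v)$ into a bulk residual integral plus a boundary-mismatch integral is also equivalent to the paper's integration-by-parts manipulation in \eqref{thm3-2}--\eqref{thm3-8}.

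Where you diverge is in the mechanism for the last half power of $\epsilon$, and here you have overlooked the simple route. You correctly observe that Theorem~\ref{thm2} applied to $w=\widetilde g-(A\nabla u)\cdot\bm n$ delivers
\[
\int_{\Gamma_\epsilon}wv|\nabla\omega_\epsilon|\dd\bm x\;\lesssim\;\epsilon^{3/2}\,\|w\|_{H^2(\Gamma_\epsilon;\omega_\epsilon)}\,\|v\|_{H^1(D_\epsilon;\omega_\epsilon)},
\]
and you then propose to squeeze out the remaining $\epsilon^{1/2}$ via the oddness of $S$ and a first-moment cancellation along normal rays. The paper does nothing of the sort. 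The missing $\epsilon^{1/2}$ lives not in the test function $v$ and not in any symmetry of the mollifier, but in the \emph{known} factor: since $|\Gamma_\epsilon|\lesssim\epsilon$ and the quantities $u_t$, $\divo(A\nabla u)$, $\bm n\cdot(A\nabla u)-g$, $f$ are bounded (in the relevant pointwise sense implicit in Assumptions~\ref{assumption1}--\ref{regularity_assump}), each of their weighted $\Gamma_\epsilon$-norms is $\lesssim\epsilon^{1/2}$. That is the whole trick—see the line ``Since $|\Gamma_\epsilon|\lesssim\epsilon$'' preceding \eqref{thm3-10}. Thus $\epsilon^{3/2}\cdot\epsilon^{1/2}=\epsilon^2$ falls out immediately, and your ``sharpened layer lemma'' is never needed.

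The same remark applies to your treatment of the bulk term: you invoke Lemma~\ref{lemma1} plus a Lipschitz bound $|r|\lesssim\epsilon$ on the extension residual, which is workable but demands more of the extension than the paper uses. The paper instead applies Theorem~\ref{thm1} term by term to $(u_t,v)$, $(\divo(A\nabla u),v)$ and $(f,v)$, obtaining $\epsilon^{3/2}$ times a $\Gamma_\epsilon$-norm of each known factor times $\|v\|_{H^1(\Gamma_\epsilon;\omega_\epsilon)}$, and then again picks up $\epsilon^{1/2}$ from $|\Gamma_\epsilon|\lesssim\epsilon$. So the obstacle you flag as ``the main obstacle'' is not one; once you notice that all of the $\epsilon$-dependence can be loaded onto the smooth data via the measure of $\Gamma_\epsilon$, the proof closes with nothing beyond Theorems~\ref{thm1} and~\ref{thm2}.
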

\begin{proof}
	Let us analyze $\left(u_t,v\right)-\left(u_t,v\right)_{D_{\epsilon};\omega_{\epsilon}}$, $a(u,v)-a^{\epsilon}(u,v)$ and $\ell^{\epsilon}(v)-\ell(v)$ respectively for any $v \in H^1(D_{\epsilon};\omega_{\epsilon})$. 
	Recalling the definition of $a^{\epsilon}(\cdot,\cdot)$ and $a(\cdot,\cdot)$, since $\omega_{\epsilon}$ will vanish on  $\partial D_{\epsilon}$, we can derive the following formula by using the integration by part,
	{\small\begin{equation}\label{thm3-2}
			\begin{aligned}
				&a(u,v)-a^{\epsilon}(u,v)\\
				&\quad=\int_{D_{\epsilon}} \divo(A\nabla u)\omega_{\epsilon}v\dd\bm x+\int_{D_{\epsilon}} A\nabla u\cdot\nabla \omega_{\epsilon} v\dd \bm x+\int_{\partial D} \bm n\cdot\left(A\nabla u\right)v\dd \sigma-\int_D\divo(A\nabla u)v\dd \bm x\\
				&\quad=\int_{D_{\epsilon}}\divo\left(A\nabla u\right)\omega_{\epsilon} v\dd\bm x-\int_D\divo\left(A\nabla u\right)v\dd \bm x-\int_{D_{\epsilon}} \bm n\cdot(A\nabla u)\left|\nabla \omega_{\epsilon}\right|v\dd \bm x+\int_{\partial D} gv\dd \sigma,
			\end{aligned}
	\end{equation}}
	where in the last equality we use the fact that $\nabla \omega_{\epsilon}=-\bm n\left|\nabla \omega_{\epsilon}\right|$. By Theorem \ref{thm1}, we get
	\begin{align}
		\label{thm3-3}
		&\left|\int_D \divo(A\nabla u)v\dd \bm x-\int_{D_{\epsilon}} \divo(A\nabla u)v\omega_{\epsilon}\dd \bm x\right|
		\lesssim \epsilon^{\frac 3 2}\left\|\divo(A\nabla u)\right\|_{H^1(\Gamma_{\epsilon};\omega_{\epsilon})}\|v\|_{H^1(\Gamma_{\epsilon};\omega_{\epsilon})}.
	\end{align}
	Then, inserting \eqref{thm3-3} to \eqref{thm3-2}, we get that
	\begin{equation}
		\begin{aligned}
			\label{thm3-4}
			a(u,v)-a^{\epsilon}(u,v)\lesssim& \epsilon^{\frac 3 2}\left\|\divo(A\nabla u)\right\|_{H^1(\Gamma_{\epsilon};\omega_{\epsilon})}\|v\|_{H^1(\Gamma_{\epsilon};\omega_{\epsilon})}-\int_{D_{\epsilon}} \bm n\cdot(A\nabla u)\left|\nabla \omega_{\epsilon}\right|v\dd \bm x\\
			&+\int_{\partial D} gv\dd \sigma.
		\end{aligned}
	\end{equation}
	
	As for the estimate of $\left(u_t,v\right)-\left(u_t,v\right)_{D_{\epsilon};\omega_{\epsilon}}$, by using the conclusion of Theorem \ref{thm1}, we obtain
	\begin{equation}
		\label{thm3-5}
		\left(u_t,v\right)-\left(u_t,v\right)_{D_{\epsilon};\omega_{\epsilon}}\lesssim \epsilon^{\frac 3 2}\|u_t\|_{H^1(\Gamma_{\epsilon};\omega_{\epsilon})}\|v\|_{H^1(\Gamma_{\epsilon};\omega_{\epsilon})}.
	\end{equation}
	As for the estimate of $\ell^{\epsilon}(v)-\ell(v)$, same as the derivation of \eqref{thm3-5}, we can get
	\begin{equation}
		\label{thm3-6}
		\begin{aligned}
			\ell^{\epsilon}(v)-\ell(v)&= \int_{D_{\epsilon}}fv\omega_{\epsilon}\dd \bm x-\int_D fv\dd \bm x+\int_{D_{\epsilon}}gv\left|\nabla\omega_{\epsilon}\right|\dd \bm x-\int_{\partial D}gv\dd \sigma\\
			&\lesssim \epsilon^{\frac 3 2}\|f\|_{H^1(\Gamma_{\epsilon};\omega_{\epsilon})}\|v\|_{H^1(\Gamma_{\epsilon};\omega_{\epsilon})}+\int_{D_{\epsilon}} gv\left|\nabla\omega_{\epsilon}\right|\dd \bm x-\int_{\partial D}gv\dd \sigma.
		\end{aligned}
	\end{equation}
	Combining \eqref{thm3-4}-\eqref{thm3-6} together, we  get
	\begin{equation}
		\label{thm3-7}
		\begin{aligned}
			&\left(u_t^{\epsilon}-u_t,v\right)_{D_{\epsilon};\omega_{\epsilon}}+a^{\epsilon}(u^{\epsilon}-u,v)\\
			&\quad=\left(u_t,v\right)-\left(u_t,v\right)_{D_{\epsilon};\omega_{\epsilon}}+a(u,v)-a^{\epsilon}(u,v)+\ell^{\epsilon}(v)-\ell(v)\\	
			&\quad\lesssim\epsilon^{\frac 3 2}\|u_t\|_{H^1(\Gamma_{\epsilon};\omega_{\epsilon})}\|v\|_{H^1(\Gamma_{\epsilon};\omega_{\epsilon})}+\epsilon^{\frac 3 2}\left\|\divo\left(A\nabla u\right)\right\|_{H^1(\Gamma_{\epsilon};\omega_{\epsilon})}\|v\|_{H^1(\Gamma_{\epsilon};\omega_{\epsilon})}\\
			&\qquad-\int_{D_{\epsilon}} \bm n\cdot(A\nabla u)\left|\nabla\omega_{\epsilon}\right|v\dd\bm x+\epsilon^{\frac 3 2}\|f\|_{H^1(\Gamma_{\epsilon};\omega_{\epsilon})}\|v\|_{H^1(\Gamma_{\epsilon};\omega_{\epsilon})}+\int_{D_{\epsilon}} gv|\nabla\omega_{\epsilon}|\dd \bm x.
		\end{aligned}
	\end{equation}
	Applying with Theorem \ref{thm2}, we know that
	\begin{equation}\label{thm3-8}
		\begin{aligned}
			\int_{D_{\epsilon}} gv|\nabla\omega_{\epsilon}|\dd \bm x-\int_{D_{\epsilon}}\bm n\cdot(A\nabla u)v|\nabla\omega_{\epsilon}|\dd \bm x
			&=\int_{D_{\epsilon}}\left(\bm n\cdot(A\nabla u)-g \right)v|\nabla \omega_{\epsilon}|\dd \bm x\notag\\
			&\lesssim\epsilon^{\frac 3 2}\|\bm n\cdot(A\nabla u)-g\|_{H^2(\Gamma_{\epsilon};\omega_{\epsilon})}\|v\|_{H^1(D_{\epsilon};\omega_{\epsilon})}.
		\end{aligned}
	\end{equation}
	Therefore, we have by  \eqref{thm3-7}   and \eqref{thm3-8} that
	\begin{equation}\label{thm3-9}
		\begin{aligned}
			&\left(u_t^{\epsilon}-u_t,v\right)_{D_{\epsilon};\omega_{\epsilon}}+a^{\epsilon}(u^{\epsilon}-u,v)\\
			&\quad\lesssim\epsilon^{\frac 3 2}\|u_t\|_{H^1(\Gamma_{\epsilon};\omega_{\epsilon})}\|v\|_{H^1(\Gamma_{\epsilon};\omega_{\epsilon})}+\epsilon^{\frac 3 2}\left\|\divo\left(A\nabla u\right)\right\|_{H^1(\Gamma_{\epsilon};\omega_{\epsilon})}\|v\|_{H^1(\Gamma_{\epsilon};\omega_{\epsilon})}\\
			&\qquad+\epsilon^{\frac 3 2}\|\bm n\cdot(A\nabla u)-g\|_{H^2(\Gamma_{\epsilon};\omega_{\epsilon})}\|v\|_{H^1(D_{\epsilon};\omega_{\epsilon})}+\epsilon^{\frac 3 2}\|f\|_{H^1(\Gamma_{\epsilon};\omega_{\epsilon})}\|v\|_{H^1(D_{\epsilon};\omega_{\epsilon})}.
		\end{aligned}
	\end{equation}
	
	Since $|\Gamma_{\epsilon}|\lesssim \epsilon$, applying $v=u^{\epsilon}-u$ to \eqref{thm3-9} gives
	\begin{equation}
		\begin{aligned}
			\label{thm3-10}
			&\frac 1 2\frac{\dd}{\dd t}\|u^{\epsilon}-u\|_{L^2(D_{\epsilon};\omega_{\epsilon})}^2+\|\nabla(u^{\epsilon}-u)\|_{L^2(D_{\epsilon};\omega_{\epsilon})}^2\\
			&\quad\lesssim \epsilon^{\frac 3 2}\Big(\|u_t\|_{H^1(\Gamma_{\epsilon};\omega_{\epsilon})}+\|\divo(A\nabla u)\|_{H^1(\Gamma_{\epsilon};\omega_{\epsilon})}+\|\bm n\cdot(A\nabla u)-g\|_{H^2(\Gamma_{\epsilon};\omega_{\epsilon})}\notag\\
			&\qquad+\|f\|_{H^1(\Gamma_{\epsilon};\omega_{\epsilon})}\Big)\|u^{\epsilon}-u\|_{H^1(\Gamma_{\epsilon};\omega_{\epsilon})}\notag\\
			&\quad\lesssim  \epsilon^4 +\|u^{\epsilon}-u\|_{L^2(D_{\epsilon};\omega_{\epsilon})}^2+\|\nabla(u^{\epsilon}-u)\|_{L^2(D_{\epsilon};\omega_{\epsilon})}^2.
		\end{aligned}
	\end{equation}
	Thus
	\begin{equation*}
		\frac 1 2\frac{\dd}{\dd t}\|u^{\epsilon}-u\|_{L^2(D_{\epsilon};\omega_{\epsilon})}^2 \lesssim \epsilon^4+\|u^{\epsilon}-u\|_{L^2(D_{\epsilon};\omega_{\epsilon})}^2,
	\end{equation*}
	which implies 
	\begin{equation*}
		\|u^{\epsilon}-u\|_{L^2(D_{\epsilon};\omega_{\epsilon})}\lesssim \epsilon^2.
	\end{equation*}
	The proof is completed.
	
\end{proof}

\subsection{Optimal error estimate in the $H^1$ norm}

First we derive several important lemmas needed for the $H^1$ norm error estimate.

\begin{lemma}
	\label{lemma3}
	Suppose that $0<\epsilon\leq \epsilon_0$,  $g(t)\in H^1(D_{\epsilon};\omega_{\epsilon}) $, and $f$ satisfies  Assumption \ref{assumption1}. Assume the exact solution $u\in L^2(0,T;H^1(D))$ and its extension  to $D_\epsilon$ fulfills Assumption  \ref{regularity_assump}. Then we have
	\begin{equation}
		\label{lemma3-1}
		\int_{\partial D}g(u^{\epsilon}-u)\dd \sigma+\int_{D_{\epsilon}} g|\nabla\omega_{\epsilon}|(u^{\epsilon}-u)\dd \bm x\leq C\epsilon^2+\frac 1 4\|\nabla u^{\epsilon}-\nabla u\|_{L^2(\Gamma_{\epsilon};\omega_{\epsilon})}^2,
	\end{equation}
	where $C$ is a constant independent of $\epsilon$.
\end{lemma}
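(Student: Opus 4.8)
The plan is to treat the two integrals in \eqref{lemma3-1} through the two governing weak formulations, so that the boundary datum $g$ is eliminated in favour of the solutions and the right-hand side reduces to diffuse-versus-exact quadrature errors controlled by Section \ref{pre}, finishing with Young's inequality. First I would test the exact problem \eqref{eq2-2} with $v=e:=u^{\epsilon}-u$ restricted to $D$ to get $\int_{\partial D} g e\dd\sigma=(u_t,e)_D+a(u,e)-(f,e)_D$, and test the diffuse problem \eqref{weighted_variational} with $v=e$ to get $\int_{D_{\epsilon}} g e\,|\nabla\omega_{\epsilon}|\dd\bm x=(u^{\epsilon}_t,e)_{D_{\epsilon};\omega_{\epsilon}}+a^{\epsilon}(u^{\epsilon},e)-(f,e)_{D_{\epsilon};\omega_{\epsilon}}$. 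Adding these identities expresses the entire left-hand side of \eqref{lemma3-1} in terms of $u,u^{\epsilon},f$ only, with no boundary integral remaining, which is the natural organizing step since both integrals are exactly the Neumann contributions of the respective bilinear problems.

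Next I would process the time and source groups $(u_t,e)_D+(u^{\epsilon}_t,e)_{D_{\epsilon};\omega_{\epsilon}}$ and $-(f,e)_D-(f,e)_{D_{\epsilon};\omega_{\epsilon}}$ by writing each diffuse piece as the corresponding exact piece plus a quadrature remainder of the form handled by Lemma \ref{lemma1} (with $w=u_t$ or $w=f$, both in $L^{\infty}(D_{\epsilon})$ under Assumptions \ref{assumption1}--\ref{regularity_assump}) and by Theorem \ref{thm1}. Each such remainder carries a factor $\epsilon^{1/2}$ or $\epsilon^{3/2}$ times $\|e\|_{L^2(\Gamma_{\epsilon};\omega_{\epsilon})}$, and here I would feed in the already-proven bound $\|e(t)\|_{L^2(D_{\epsilon};\omega_{\epsilon})}\lesssim\epsilon^2$ of Theorem \ref{thm3}, so that $\|e\|_{L^2(\Gamma_{\epsilon};\omega_{\epsilon})}\le\|e\|_{L^2(D_{\epsilon};\omega_{\epsilon})}\lesssim\epsilon^2$ and these contributions are comfortably $O(\epsilon^2)$. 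For the diffusion group I would integrate $a(u,e)$ by parts exactly as in \eqref{thm3-2}, using $\divo(A\nabla u)=u_t-f$ and the relation $\nabla\omega_{\epsilon}=-\bm n|\nabla\omega_{\epsilon}|$, so that $a^{\epsilon}(u,e)$ and $a(u,e)$ reorganize and the surviving flux factor becomes $\int_{D_{\epsilon}}(\bm n\cdot(A\nabla u)-g)\,e\,|\nabla\omega_{\epsilon}|\dd\bm x$; since $\bm n\cdot(A\nabla u)-g=0$ on $\partial D$, this is exactly the situation of Theorem \ref{thm2} and is of high order.

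To produce the stated right-hand side I would collect all terms that still carry a gradient of $e$ — namely the $\|e\|_{H^1(\Gamma_{\epsilon};\omega_{\epsilon})}$ factors from Theorems \ref{thm1}--\ref{thm2} and the residual diffusion term — localize them to the shell $\Gamma_{\epsilon}$ (using that $\nabla u$ and $|\nabla\omega_{\epsilon}|$ are effectively concentrated near $\partial D$ and that $|\Gamma_{\epsilon}|\lesssim\epsilon$), and apply Young's inequality in the form $C\epsilon\,\|\nabla e\|_{L^2(\Gamma_{\epsilon};\omega_{\epsilon})}\le\tfrac14\|\nabla e\|_{L^2(\Gamma_{\epsilon};\omega_{\epsilon})}^2+C\epsilon^2$. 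This yields precisely the $\tfrac14\|\nabla u^{\epsilon}-\nabla u\|_{L^2(\Gamma_{\epsilon};\omega_{\epsilon})}^2$ absorption term together with the extra $C\epsilon^2$, completing the estimate.

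I expect the diffusion group to be the main obstacle. The term $a(u,e)=\int_D A\nabla u\cdot\nabla e\dd\bm x$ is a full-domain quantity that is not a priori small, and it is only controllable once it is matched against $a^{\epsilon}(u,e)$ so that the pair telescopes into a diffuse-versus-exact quadrature error of the bilinear form (small by Theorem \ref{thm1}) rather than an independent $O(1)$ contribution; getting this matching right, so that only $\|\nabla e\|_{L^2(\Gamma_{\epsilon};\omega_{\epsilon})}$ and not the full $\|\nabla e\|_{L^2(D_{\epsilon};\omega_{\epsilon})}$ survives, is the delicate point. A secondary difficulty is sharpness: a naive thin-shell trace inequality $\|e\|_{L^2(\partial D)}^2\lesssim\epsilon^{-1}\|e\|_{L^2(\Gamma_{\epsilon})}^2+\epsilon\|\nabla e\|_{L^2(\Gamma_{\epsilon})}^2$ combined with Theorem \ref{thm3} only gives order $\epsilon^{3/2}$, so the $\epsilon^{-1}$ loss must be avoided by routing $\int_{\partial D}ge\dd\sigma$ through the volume via Green's identity as above, which is exactly where the $L^2$ estimate enters without degradation and the optimal $\epsilon^2$ emerges.
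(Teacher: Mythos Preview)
Your route through the two weak formulations has a structural problem that I do not see how to repair. When you test the diffuse problem with $e=u^{\epsilon}-u$ you obtain $a^{\epsilon}(u^{\epsilon},e)$, and after any decomposition this necessarily leaves behind the full-domain energy $a^{\epsilon}(e,e)\ge\kappa\|\nabla e\|_{L^2(D_{\epsilon};\omega_{\epsilon})}^2$ together with a term of the type $2a^{\epsilon}(u,e)$ (or, in the subtraction variant, the pair $(e_t,e)_{D_{\epsilon};\omega_{\epsilon}}+a^{\epsilon}(e,e)$). None of these is a quadrature remainder: they live on all of $D_{\epsilon}$, they carry no prefactor of $\epsilon$, and at this stage of the paper the $H^1$ bound on $e$ is not yet available, so they cannot be dominated by $C\epsilon^2+\tfrac14\|\nabla e\|_{L^2(\Gamma_{\epsilon};\omega_{\epsilon})}^2$. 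Your own closing paragraph correctly flags this as the ``delicate point,'' but the proposed fix---matching $a(u,e)$ against $a^{\epsilon}(u,e)$ via \eqref{thm3-2}---reintroduces $\int_{\partial D}ge\,\dd\sigma$ on the right-hand side and so is circular, while the leftover $a^{\epsilon}(u+u^{\epsilon},e)$ and $a^{\epsilon}(e,e)$ remain uncontrolled.

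The paper avoids the weak formulations entirely. It applies the divergence theorem to the vector field $g\,\nabla d_D\,(u^{\epsilon}-u)$: since $\nabla d_D=\bm n$ on $\partial D$ and $\nabla\omega_{\epsilon}=-\bm n|\nabla\omega_{\epsilon}|$, both boundary-type integrals become volume integrals of $\divo\!\big(g\,\nabla d_D\,(u^{\epsilon}-u)\big)$, one over $D$ and one over $D_{\epsilon}$ weighted by $\omega_{\epsilon}$. Expanding the divergence produces two pieces $\divo(g\nabla d_D)\,e$ and $g\,\nabla d_D\cdot\nabla e$, each appearing as a pure diffuse-versus-exact quadrature difference to which Lemma~\ref{lemma1} applies directly, giving factors $\epsilon^{1/2}\|\cdot\|_{L^2(\Gamma_{\epsilon};\omega_{\epsilon})}$; Young's inequality and the $L^2$ estimate of Theorem~\ref{thm3} then finish the argument. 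The key is that this divergence-theorem trick manufactures a \emph{difference} of volume integrals with identical integrands, so nothing of size $\|\nabla e\|_{L^2(D_{\epsilon})}$ ever appears.
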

\begin{proof}
	Using $\nabla d_D(\bm x)=\bm n(\bm x)$, $\nabla \omega_{\epsilon}=-\bm n|\nabla\omega_{\epsilon}|$ and the divergence theorem, we derive that
	\begin{equation}
		\label{lemma3-2}
		\begin{aligned}
			&\int_{\partial D}g(u^{\epsilon}-u)\dd \sigma+ \int_{D_{\epsilon}}g|\nabla\omega_{\epsilon}|(u^{\epsilon}-u)\dd \bm x\\
			&\quad=\int_{\partial D}g(u^{\epsilon}-u)\dd \sigma-\int_{\partial D_{\epsilon}}g|\nabla \omega_{\epsilon}|(u^{\epsilon}-u)\dd \bm x\\
			&\quad=\int_D \divo(g\nabla d_D(u^{\epsilon}-u))\dd \bm x-\int_{D_{\epsilon}}\divo(g\nabla d_D(u^{\epsilon}-u))\omega_{\epsilon}\dd \bm x\\
			&\quad=\left|\int_D \divo(g\nabla d_D)\left(u^{\epsilon}-u\right)\dd \bm x-\int_{D_{\epsilon}}\divo(g\nabla d_D)\left(u^{\epsilon}-u\right)\omega_{\epsilon} \dd \bm x\right|\\
			&\qquad+\left|\int_D g\nabla d_D\cdot\left(\nabla u^{\epsilon}-\nabla u\right)\dd \bm x+\int_{D_{\epsilon}}g\nabla d_D\cdot\left(\nabla u^{\epsilon}-\nabla u\right)|\omega_{\epsilon}|\dd \bm x\right| \\
			&\quad=:\rm{II}_1+\rm{II}_2.
		\end{aligned}
	\end{equation}
	By using the conclusion of Lemma \ref{lemma1} and $|\Gamma_{\epsilon}|\lesssim \epsilon$, we obtain
	\begin{equation}
		\label{lemma3-3}\left\{
		\begin{aligned}
			\rm{II}_1&\lesssim \left\|\divo(g\nabla d_D)\right\|_{L^2(\Gamma_{\epsilon};\omega_{\epsilon})}\epsilon^{\frac 1 2}\|u^{\epsilon}-u\|_{L^2(\Gamma_{\epsilon};\omega_{\epsilon})}\leq C\epsilon^2+\frac 1 8\|u^{\epsilon}-u\|_{L^2(\Gamma_{\epsilon};\omega_{\epsilon})}^2,\\
			\rm{II}_2 &\lesssim \|g\|_{L^2(\Gamma_{\epsilon};\omega_{\epsilon})}\epsilon^{\frac 1 2}\|\nabla u^{\epsilon}-\nabla u\|_{L^2(\Gamma_{\epsilon};\omega_{\epsilon})}
			\leq C\epsilon^2+\frac 1 8\|\nabla u^{\epsilon}-\nabla u\|_{L^2(\Gamma_{\epsilon};\omega_{\epsilon})}^2.
		\end{aligned}\right.
	\end{equation}
	
	Combining \eqref{lemma3-3} and \eqref{lemma3-2} together, and applying the $L^2$ norm error estimate in Theorem \ref{thm3}, we easily derive the result \eqref{lemma3-1}.
	
\end{proof}

\begin{lemma}
	\label{lemma4}
	Suppose that $0<\epsilon\leq \epsilon_0$,  $g\in L^{\infty}(0,T;H^1(D_{\epsilon};\omega_{\epsilon}))$, and $g_t\in L^{\infty}(0,T;L^{\infty}(D_{\epsilon}))$. Assume the exact solution $u\in L^2(0,T;H^1(D))$ and its extension  to $D_\epsilon$ fulfills Assumption  \ref{regularity_assump}. If $\kappa \leq A(\bm x) \leq \kappa^{-1}$ for all $\bm x \in D_{\epsilon}$ with some constant $\kappa >0$, then we have
	\begin{equation}
		\label{lemma4-1}
		\begin{aligned}
			&\Big|\int_0^T\int_{D_{\epsilon}}g(u_t^{\epsilon}-u_t)|\nabla \omega_{\epsilon}|\dd\bm x\dd t-\int_0^T \int_{D_{\epsilon}} A\nabla u_t\cdot\nabla \omega_{\epsilon}(u^{\epsilon}-u)\dd \bm x\dd t\\
			&-\int_0^T\int_{\partial D}g(u_t^{\epsilon}-u_t)\dd \sigma\dd t
			-\int_0^T\int_{\partial D}g_t(u^{\epsilon}-u)\dd \sigma\dd t\Big|\\
			&\qquad\leq C\epsilon^2+\frac 1 4\|\nabla u^{\epsilon}(T,\cdot)-\nabla u(T,\cdot)\|_{L^2(\Gamma_{\epsilon};\omega_{\epsilon})}^2+\frac 1 4\|\nabla u^{\epsilon}(0,\cdot)-\nabla u(0,\cdot)\|_{L^2(\Gamma_{\epsilon};\omega_{\epsilon})}^2,
		\end{aligned}
	\end{equation}
	where $C$ is a constant independent of $\epsilon$.
\end{lemma}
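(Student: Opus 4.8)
The plan is to remove the time derivatives from the first and third terms of \eqref{lemma4-1} by integrating by parts in $t$, and then to control what remains with Lemma~\ref{lemma3} (at the two time endpoints) and with Theorem~\ref{thm2} (for the leftover time integral). Writing $e:=u^{\epsilon}-u$, integration by parts in time turns the first term into $\big[\int_{D_{\epsilon}}g\,e\,|\nabla\omega_{\epsilon}|\dd\bm x\big]_{t=0}^{t=T}-\int_0^T\int_{D_{\epsilon}}g_t\,e\,|\nabla\omega_{\epsilon}|\dd\bm x\dd t$ and the third term into $-\big[\int_{\partial D}g\,e\dd\sigma\big]_{t=0}^{t=T}+\int_0^T\int_{\partial D}g_t\,e\dd\sigma\dd t$, and the boundary-in-time integral produced here cancels the fourth term of \eqref{lemma4-1}. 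Rewriting the second term by means of $\nabla\omega_{\epsilon}=-|\nabla\omega_{\epsilon}|\nabla d_D$ (with $|\nabla d_D|\equiv1$) as $\int_0^T\int_{D_{\epsilon}}A(\nabla u_t\cdot\nabla d_D)|\nabla\omega_{\epsilon}|\,e\dd\bm x\dd t$ and merging it with the $g_t$-volume integral above, the quantity inside the absolute value in \eqref{lemma4-1} should reduce to
\begin{equation*}
	\Big[\int_{D_{\epsilon}}g\,e\,|\nabla\omega_{\epsilon}|\dd\bm x-\int_{\partial D}g\,e\dd\sigma\Big]_{t=0}^{t=T}+\int_0^T\int_{D_{\epsilon}}\big(A\nabla u_t\cdot\nabla d_D-g_t\big)\,e\,|\nabla\omega_{\epsilon}|\dd\bm x\dd t .
\end{equation*}

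I would then argue as follows. At $t=T$ and $t=0$ the expression $\int_{D_{\epsilon}}g\,e\,|\nabla\omega_{\epsilon}|\dd\bm x-\int_{\partial D}g\,e\dd\sigma$ is, up to the sign convention there, exactly the quantity estimated in Lemma~\ref{lemma3} (it is the defect of the boundary-integral approximation \eqref{approx2} with $h=g\,e$); applying that lemma at the two times bounds the endpoint bracket by $C\epsilon^{2}+\frac14\|\nabla e(T,\cdot)\|_{L^2(\Gamma_{\epsilon};\omega_{\epsilon})}^{2}+\frac14\|\nabla e(0,\cdot)\|_{L^2(\Gamma_{\epsilon};\omega_{\epsilon})}^{2}$, which is precisely the right-hand side of \eqref{lemma4-1}. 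For the remaining time integral, the key point is that the coefficient $w:=A\nabla u_t\cdot\nabla d_D-g_t$ vanishes on $\partial D$: since $A$ is independent of $t$, differentiating the Neumann condition $A\nabla u\cdot\bm n=g$ in time and using $\bm n=\nabla d_D$ on $\partial D$ yields $A\nabla u_t\cdot\nabla d_D=g_t$ on $\partial D$. Because $|\nabla\omega_{\epsilon}|$ is supported in $\Gamma_{\epsilon}$, Theorem~\ref{thm2} applies to $\int_{D_{\epsilon}}w\,e\,|\nabla\omega_{\epsilon}|\dd\bm x$ with this $w$ and $v=e$, giving a bound $\lesssim\epsilon^{3/2}\big(\|w\|_{H^2(D_{\epsilon};\omega_{\epsilon})}+\|w\|_{H^2(\Gamma_{\epsilon};\omega_{\epsilon})}\big)\|e\|_{H^1(D_{\epsilon};\omega_{\epsilon})}\lesssim\epsilon^{3/2}\|e\|_{H^1(D_{\epsilon};\omega_{\epsilon})}$; integrating in time, using the Cauchy-Schwarz inequality in $t$, and invoking $\|e(t)\|_{L^2(D_{\epsilon};\omega_{\epsilon})}\lesssim\epsilon^{2}$ from Theorem~\ref{thm3} together with $\int_0^T\|\nabla e(t)\|_{L^2(D_{\epsilon};\omega_{\epsilon})}^{2}\dd t\lesssim\epsilon^{4}$ (obtained by integrating the energy inequality in the proof of Theorem~\ref{thm3}) makes this contribution $O(\epsilon^{7/2})$, which is absorbed into $C\epsilon^{2}$. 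Collecting the two pieces gives \eqref{lemma4-1}.

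The hard part will be this middle reduction: seeing that the $g_t$ term created by the temporal integration by parts must be combined with the second term of \eqref{lemma4-1}, and that after the substitution $\nabla\omega_{\epsilon}=-|\nabla\omega_{\epsilon}|\nabla d_D$ the resulting coefficient $A\nabla u_t\cdot\nabla d_D-g_t$ is exactly the time derivative of the Neumann defect, hence zero on $\partial D$ --- which is what lets Theorem~\ref{thm2} supply the sharp $\epsilon^{3/2}$ rate rather than the weaker rate a direct trace estimate would give. A secondary, more routine concern is regularity bookkeeping: one needs $u_t$, $A$ and $g_t$ regular enough that $w\in H^2(D_{\epsilon};\omega_{\epsilon})$ (otherwise one falls back on the divergence-theorem-plus-Lemma~\ref{lemma1} device used to prove Lemma~\ref{lemma3}), and one must check that $e(0,\cdot)=\widetilde{u_0}-u_0$ is small enough for the energy inequality of Theorem~\ref{thm3} to be integrated in time.
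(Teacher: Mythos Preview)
Your overall architecture matches the paper's: integrate by parts in $t$, see the cancellation of the $\int_{\partial D}g_t\,e\,\dd\sigma$ term, bound the two endpoint brackets at $t=0,T$ by Lemma~\ref{lemma3}, and treat the leftover time integral $\int_0^T\int_{D_\epsilon}\big(\bm n\cdot(A\nabla u_t)-g_t\big)\,e\,|\nabla\omega_\epsilon|\dd\bm x\dd t$ separately.

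The one genuine difference is in how that leftover integral (the paper's $\mathrm{III}_3$) is estimated. You invoke Theorem~\ref{thm2}, which needs the coefficient $w=A\nabla u_t\cdot\nabla d_D-g_t$ to lie in $H^2(D_\epsilon;\omega_\epsilon)$; this is strictly stronger than what the lemma assumes ($g_t\in L^\infty$ and, via Assumption~\ref{regularity_assump}, $u_t\in W^{1,\infty}$), and you correctly flag it as a concern. The paper avoids this entirely: it simply splits the weight as $|\nabla\omega_\epsilon|=\omega_\epsilon^{1/2}\cdot(|\nabla\omega_\epsilon|/\omega_\epsilon^{1/2})$, applies Cauchy--Schwarz to pull out $\|e\|_{L^2(D_\epsilon;\omega_\epsilon)}$, and bounds the remaining factor by $\|\bm n\cdot(A\nabla u_t)-g_t\|_{L^\infty(D_\epsilon)}$ times an $\epsilon$-independent constant. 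Combined with Theorem~\ref{thm3} this gives $\mathrm{III}_3\lesssim\epsilon^2$ directly, with no vanishing on $\partial D$, no $H^2$ regularity, and no need to integrate the energy inequality to control $\int_0^T\|\nabla e\|^2\dd t$. Your route does yield a sharper $O(\epsilon^{7/2})$ under the extra smoothness, but the paper's argument is shorter and exactly matches the stated hypotheses.
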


\begin{proof}
	Applying with integration by part, we can reformulate the left-hand side of  \eqref{lemma4-1} as
	\begin{equation}\label{Lemma4-2}
		\begin{aligned}
			&\Big|\int_0^T\int_{D_{\epsilon}}g(u_t^{\epsilon}-u_t)|\nabla \omega_{\epsilon}|\dd\bm x\dd t-\int_0^T \int_{D_{\epsilon}} A\nabla u_t\cdot\nabla \omega_{\epsilon}(u^{\epsilon}-u)\dd \bm x\dd t\\
			&-\int_0^T\int_{\partial D}g(u_t^{\epsilon}-u_t)\dd \sigma\dd t-\int_0^T\int_{\partial D}g_t(u^{\epsilon}-u)\dd \sigma\dd t\Big|\\
			&\quad\leq\left|\int_{D_{\epsilon}}g(T,\cdot)(u^{\epsilon}(T,\cdot)-u(T,\cdot))|\nabla\omega_{\epsilon}|\dd \bm x-\int_{\partial D}g(T,\cdot)(u^{\epsilon}(T,\cdot)-u(T,\cdot))\dd\sigma \right|\\
			&\qquad+\left|\int_{D_{\epsilon}}g(0,\cdot)(u^{\epsilon}(0,\cdot)-u(0,\cdot))|\nabla\omega_{\epsilon}|\dd \bm x-\int_{\partial D}g(0,\cdot)(u^{\epsilon}(0,\cdot)-u(0,\cdot))\dd\sigma \right|\\
			&\qquad+\left|\int_0^T\int_{D_{\epsilon}}g_t(u^{\epsilon}-u)|\nabla\omega_{\epsilon}|\dd\bm x\dd t+\int_0^T\int_{D_{\epsilon}}A\nabla u_t\cdot\nabla \omega_{\epsilon}(u^{\epsilon}-u)\dd\bm x\dd t \right|\\
			&\quad=:\rm{III}_1+\rm{III}_2+\rm{III}_3.
		\end{aligned}
	\end{equation}
	In the remaining part, we will estimate the term $\rm{III}_1$, $\rm{III}_2$ and $\rm{III}_3$, respectively. As for the estimate of $\rm{III}_1$ and $\rm{III}_2$, we can apply the Lemma \ref{lemma3} to derive
	\begin{equation}\left\{
		\label{lemma4-3}
		\begin{aligned}
			\rm{III}_1&\leq C\epsilon^2+\frac{1}{4}\|\nabla u^{\epsilon}(T,\cdot)-\nabla u(T,\cdot)\|_{L^2(\Gamma_{\epsilon};\omega_{\epsilon})}^2,\\
			\rm{III_2}&\leq C\epsilon^2+\frac 1 4 \|\nabla u^{\epsilon}(0,\cdot)-\nabla u(0,\cdot)\|_{L^2(\Gamma_{\epsilon};\omega_{\epsilon})}^2.
		\end{aligned}\right.
	\end{equation}
	As for the estimate of $\rm{III}_3$, by using the Cauchy-Schwarz inequality and the definition of $\omega_{\epsilon}$, we have
	\begin{equation}\label{lemma4-4}
		\begin{aligned}
			\rm{III}_3 &= \left|\int_0^T\int_{D_{\epsilon}} g_t(u^{\epsilon}-u)|\nabla\omega_{\epsilon}|\dd\bm x\dd t-\int_0^T\int_{D_{\epsilon}}\bm n \cdot(A\nabla u_t)(u^{\epsilon}-u)|\nabla\omega_{\epsilon}|\dd\bm x\dd t \right|\\
			&=\left|\int_0^T\int_{D_{\epsilon}}(\bm n\cdot(A\nabla u_t)-g_t)(u^{\epsilon}-u)|\nabla \omega_{\epsilon}|\dd\bm x\dd t \right|\\
			&\leq\int_0^T\left(\int_{D_{\epsilon}}\left|\omega_{\epsilon}^{\frac 1 2}(u^{\epsilon}-u) \right|^2\dd\bm x \right)^{\frac 1 2}\left( \int_{D_{\epsilon}}\left|(\bm n\cdot(A\nabla u_t)-g_t)\left|\nabla\omega_{\epsilon}\right|/\omega_{\epsilon}^{\frac 1 2} \right|^2\dd \bm x\right)^{\frac 1 2}\dd t\\
			&\lesssim \sup_{0\leq t\leq T}\|u^{\epsilon}(t)-u(t)\|_{L^2(D_{\epsilon};\omega_{\epsilon})}\|\bm n\cdot(A\nabla u_t)-g_t\|_{L^{\infty}(D_{\epsilon})}\\
			&\lesssim \epsilon^2.
		\end{aligned}
	\end{equation}
	Inserting \eqref{lemma4-3} and \eqref{lemma4-4} to \eqref{Lemma4-2}, we finally obtain \eqref{lemma4-1}, which completes the proof.
	
\end{proof}


\begin{theorem}[Error estimate in the $H^1$-norm]
	\label{thm5}
	Suppose that $0<\epsilon\leq \epsilon_0$,  $g\in L^{\infty}(0,T;H^2(D_{\epsilon};\omega_{\epsilon}))$, $g_t\in L^{\infty}(0,T;L^{\infty}(D_{\epsilon}))$,
	$\kappa\leq A(\bm x)\leq \kappa^{-1}$ for all $\bm x \in D$ with some constant $\kappa>0$, and $f$ satisfies  Assumption \ref{assumption1}. Assume the exact solution $u\in L^2(0,T;H^1(D_{\epsilon}))$ and its extension to $D_\epsilon$ fulfills Assumption  \ref{regularity_assump}. Then we have
	\begin{equation}
		\label{thm5-1}
		\|u^{\epsilon}(t)-u(t)\|_{H^1(D_{\epsilon};\omega_{\epsilon})}\lesssim \epsilon, \quad \forall\ 0 \leq t\leq T,
	\end{equation}
	where the hidden constant is independent of $\epsilon$.
	
\end{theorem}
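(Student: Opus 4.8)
The plan is to run an energy argument on the error equation \eqref{eq4-1} with the test function $v=u_t^{\epsilon}-u_t$ rather than $v=u^{\epsilon}-u$. Since $\widetilde A$ and $\omega_{\epsilon}$ are time-independent and $a^{\epsilon}(\cdot,\cdot)$ is symmetric, $a^{\epsilon}(u^{\epsilon}-u,u_t^{\epsilon}-u_t)=\frac12\frac{\dd}{\dd t}a^{\epsilon}(u^{\epsilon}-u,u^{\epsilon}-u)$, so \eqref{eq4-1} yields
\begin{equation*}
	\|u_t^{\epsilon}-u_t\|_{L^2(D_{\epsilon};\omega_{\epsilon})}^2+\frac12\frac{\dd}{\dd t}\,a^{\epsilon}(u^{\epsilon}-u,u^{\epsilon}-u)=R(t;u_t^{\epsilon}-u_t),
\end{equation*}
where $R(t;\cdot)$ denotes the right-hand side of \eqref{eq4-1}. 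Integrating over $(0,t)$ and using that the initial error $u^{\epsilon}(0)-u(0)$ vanishes (both equal the chosen extension of $u_0$), this becomes
\begin{equation*}
	\int_0^t\|u_s^{\epsilon}-u_s\|_{L^2(D_{\epsilon};\omega_{\epsilon})}^2\dd s+\frac12\,a^{\epsilon}(u^{\epsilon}(t)-u(t),u^{\epsilon}(t)-u(t))=\int_0^t R(s;u_s^{\epsilon}-u_s)\dd s .
\end{equation*}
It remains to bound the right-hand side by $C\epsilon^2$ plus quantities absorbable into the two terms on the left; coercivity $\widetilde A\ge\kappa$ then gives $\|\nabla(u^{\epsilon}(t)-u(t))\|_{L^2(D_{\epsilon};\omega_{\epsilon})}^2\lesssim a^{\epsilon}(u^{\epsilon}(t)-u(t),u^{\epsilon}(t)-u(t))\lesssim\epsilon^2$, which together with the $L^2$ bound of Theorem \ref{thm3} produces \eqref{thm5-1}.

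Next I would decompose $R(s;v)$ exactly as in \eqref{thm3-2}--\eqref{thm3-6}: after integration by parts, using $\nabla\omega_{\epsilon}=-\bm n|\nabla\omega_{\epsilon}|$ and cancelling the two copies of $\int_{\partial D}gv\dd\sigma$, it splits into a \emph{volume} part built from differences $\int_D h\,v\dd\bm x-\int_{D_{\epsilon}}h\,v\,\omega_{\epsilon}\dd\bm x$ with $h\in\{u_s,\,\divo(A\nabla u),\,f\}$, and a \emph{boundary} part $\int_{D_{\epsilon}}(g-\bm n\cdot(A\nabla u))|\nabla\omega_{\epsilon}|\,v\dd\bm x$. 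The essential new difficulty is that, unlike for Theorem \ref{thm3}, the left-hand side gives no control of $\|u_s^{\epsilon}-u_s\|_{H^1}$, so the bound $\epsilon^{3/2}\|h\|_{H^1(\Gamma_{\epsilon};\omega_{\epsilon})}\|v\|_{H^1(\Gamma_{\epsilon};\omega_{\epsilon})}$ coming from Theorem \ref{thm1} is unusable here. For the volume part I would instead invoke Lemma \ref{lemma1}, which costs only $\epsilon^{1/2}\|h\|_{L^2(\Gamma_{\epsilon};\omega_{\epsilon})}\|v\|_{L^2(\Gamma_{\epsilon};\omega_{\epsilon})}$ and requires $h\in L^{\infty}(D_{\epsilon})$; this holds for $u_s$ by Assumption \ref{regularity_assump}, and the $\divo(A\nabla u)$ and $f$ contributions are first combined via the PDE (so that the coefficient is $u_t$) to stay within the available regularity. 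Since $\|h\|_{L^2(\Gamma_{\epsilon};\omega_{\epsilon})}\lesssim|\Gamma_{\epsilon}|^{1/2}\lesssim\epsilon^{1/2}$ for an $L^{\infty}$ coefficient, the volume part is $\lesssim\epsilon\|u_s^{\epsilon}-u_s\|_{L^2(D_{\epsilon};\omega_{\epsilon})}$; integrating in $s$ and applying Young's inequality contributes $C\epsilon^2+\frac14\int_0^t\|u_s^{\epsilon}-u_s\|_{L^2(D_{\epsilon};\omega_{\epsilon})}^2\dd s$, the last term being absorbed on the left.

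For the boundary part I would integrate $\int_0^t\int_{D_{\epsilon}}(g-\bm n\cdot(A\nabla u))|\nabla\omega_{\epsilon}|(u_s^{\epsilon}-u_s)\dd\bm x\dd s$ by parts in time to move $\partial_s$ off $u^{\epsilon}-u$; writing $\bm n\cdot(A\nabla u)|\nabla\omega_{\epsilon}|=-A\nabla u\cdot\nabla\omega_{\epsilon}$ and reintroducing the boundary integrals over $\partial D$ through the Neumann condition $\bm n\cdot(A\nabla u)=g$ on $\partial D$, the resulting expression is exactly the combination of terms estimated in Lemma \ref{lemma4}, whose endpoint-in-time contributions are in turn controlled by Lemma \ref{lemma3}. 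This produces a bound $C\epsilon^2+\frac14\|\nabla u^{\epsilon}(t)-\nabla u(t)\|_{L^2(\Gamma_{\epsilon};\omega_{\epsilon})}^2$ (the $t=0$ analogue again vanishing). Because $\|\nabla w(t)\|_{L^2(\Gamma_{\epsilon};\omega_{\epsilon})}^2\le\kappa^{-1}a^{\epsilon}(w(t),w(t))$, choosing the Young constant in Lemmas \ref{lemma3}--\ref{lemma4} small relative to $\kappa$ lets this last term be absorbed into $\frac12 a^{\epsilon}(u^{\epsilon}(t)-u(t),u^{\epsilon}(t)-u(t))$ on the left. Collecting the estimates gives $\int_0^t\|u_s^{\epsilon}-u_s\|_{L^2(D_{\epsilon};\omega_{\epsilon})}^2\dd s+a^{\epsilon}(u^{\epsilon}(t)-u(t),u^{\epsilon}(t)-u(t))\lesssim\epsilon^2$, and the conclusion follows as in the first paragraph.

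I expect the boundary term to be the main obstacle: it is the only piece whose naive treatment (via Theorem \ref{thm2}, as in the proof of Theorem \ref{thm3}) generates a factor $\|u_s^{\epsilon}-u_s\|_{H^1}$ that the energy identity cannot absorb, and removing it forces the integration by parts in time together with the delicate cancellation between $\int_{D_{\epsilon}}(\cdot)|\nabla\omega_{\epsilon}|\dd\bm x$ and $\int_{\partial D}(\cdot)\dd\sigma$ that is packaged in Lemmas \ref{lemma3} and \ref{lemma4}. Careful bookkeeping of all endpoint-in-time contributions, and verifying that the resulting $\|\nabla(u^{\epsilon}-u)\|_{L^2(\Gamma_{\epsilon};\omega_{\epsilon})}^2$ terms are genuinely absorbable (with the correct dependence on $\kappa$), is the technical heart of the argument; the volume terms, by contrast, only require swapping Theorem \ref{thm1} for Lemma \ref{lemma1}.
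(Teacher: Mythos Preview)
Your plan is essentially the paper's: test \eqref{eq4-1} with $u_t^{\epsilon}-u_t$, integrate in time, control the volume consistency errors through Lemma~\ref{lemma1}, and push the boundary contribution through an integration by parts in time so that Lemmas~\ref{lemma3}--\ref{lemma4} apply and the resulting $\|\nabla(u^{\epsilon}-u)(t)\|_{L^2(\Gamma_{\epsilon};\omega_{\epsilon})}^2$ endpoint terms can be absorbed into $a^{\epsilon}(u^{\epsilon}-u,u^{\epsilon}-u)$.

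Two minor differences are worth noting. First, on the bilinear-form term the paper reverses your order of operations: it integrates $a(u,\cdot)-a^{\epsilon}(u,\cdot)$ by parts in \emph{time} first (producing endpoint differences $\int_D A\nabla u\cdot\nabla(u^{\epsilon}-u)-\int_{D_{\epsilon}}A\nabla u\cdot\nabla(u^{\epsilon}-u)\,\omega_{\epsilon}$, handled by Lemma~\ref{lemma1}), and only then performs the spatial integration by parts on the remaining bulk piece; your space-then-time ordering leads to the same ingredients. Second, the paper does not absorb $\int_0^t\|u_s^{\epsilon}-u_s\|_{L^2}^2$ via Young as you do; instead it claims $\|u_t^{\epsilon}-u_t\|_{L^2(D_{\epsilon};\omega_{\epsilon})}\lesssim\epsilon^2$ by invoking Theorem~\ref{thm3} on the time-differentiated problem and uses this directly in the $u_t$- and $f$-volume terms. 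Your absorption argument is cleaner and stays within the stated assumptions. One small imprecision: after your time integration by parts the boundary expression is not literally the left-hand side of Lemma~\ref{lemma4} (which is tailored to the paper's decomposition) but rather the intermediate quantities $\mathrm{III}_1$--$\mathrm{III}_3$ appearing in its proof, so the estimate goes through just the same.
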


\begin{proof}
	First, we insert $v=2(u^{\epsilon}_t-u_t)$ into \eqref{eq4-1}, we have
	\begin{equation}
		\begin{aligned}
			\label{thm5-2}
			&2\left(u_t^{\epsilon}-u_t,u_t^{\epsilon}-u_t\right)_{D_{\epsilon},\omega_{\epsilon}}+2a^{\epsilon}(u^{\epsilon}-u,u_t^{\epsilon}-u_t)\\
			&\quad = 2\left(u_t,u_t^{\epsilon}-u_t\right)-2\left(u_t,u_t^{\epsilon}-u_t\right)+2a(u,u_t^{\epsilon}-u_t)-2a^{\epsilon}(u,u_t^{\epsilon}-u_t)\\
			&\quad \quad +2\ell^{\epsilon}(u_t^{\epsilon}-u_t)-2\ell(u_t^{\epsilon}-u_t).
		\end{aligned} 
	\end{equation}
	
	Then, after integrating both sides of \eqref{thm5-2} with respect to $t$, we can derive that
	\begin{equation}
		\begin{aligned}
			\label{thm5-3}
			&\int_0^T2\left\|u_t^{\epsilon}-u_t\right\|_{L^2(D_{\epsilon};\omega_{\epsilon})}^2+\frac{\dd}{\dd t}a^{\epsilon}(u^{\epsilon}-u,u^{\epsilon}-u)\dd t\\
			&\quad \leq\int_0^T\left(u_t,u_t^{\epsilon}-u_t\right)-\left(u_t,u_t^{\epsilon}-u_t\right)_{D_{\epsilon};\omega_{\epsilon}}\dd t+2\int_0^T a(u,u_t^{\epsilon}-u_t)-a^{\epsilon}(u,u_t^{\epsilon}-u_t)\dd t\\
			&\quad \quad+2\int_0^T\ell^{\epsilon}(u_t^{\epsilon}-u_t)-\ell(u_t^{\epsilon}-u_t)\dd t\\
			&\quad=:\rm{IV}_1+\rm{IV}_2+\rm{IV}_3.
		\end{aligned}
	\end{equation}
	Next, we will estimate $\rm{IV}_1$, $\rm{IV}_2$ and $\rm{IV}_3$ respectively. For $\rm{IV}_1$, applying with Lemma \ref{lemma1}, we have
	\begin{equation*}
		\begin{aligned}
			\label{thm5-4}
			\left|\rm{IV}_1\right| &= 2\left|\int_0^T \left(u_t,u_t^{\epsilon}-u_t\right)-\left(u_t,u_t^{\epsilon}-u_t\right)_{D_{\epsilon};\omega_{\epsilon}} \dd t\right|\\
			&\lesssim \int_0^T\left|\left(u_t,u_t^{\epsilon}-u_t\right)-\left(u_t,u_t^{\epsilon}-u_t\right)_{D_{\epsilon};\omega_{\epsilon}} \right|\dd t\\
			&\lesssim \int_0^T\left\|u_t^{\epsilon}-u_t\right\|_{L^2(\Gamma_{\epsilon};\omega_{\epsilon})}\epsilon^{\frac 1 2}\|u_t\|_{L^2(\Gamma_{\epsilon};\omega_{\epsilon})}\dd t.
		\end{aligned}
	\end{equation*}
	Due to the fact that $u^{\epsilon}$ and $u_t^{\epsilon}$, as well as $u$ and $u_t$, have the same regularity, based on Theorem \ref{thm3}, we get that
	\begin{align}
		\label{thm5-5}
		|\rm{IV}_1| &\lesssim \int_0^T \epsilon^2\epsilon^{\frac 1 2}\|u_t\|_{\Gamma_{\epsilon};\omega_{\epsilon}}\dd t\lesssim \epsilon^3.
	\end{align}
	
	As for $\rm{IV}_2$, with the integration by part, we derive that
	\begin{equation}
		\begin{aligned}
			\label{thm5-6}
			\rm{IV}_2=&\,2\int_0^T a(u,u_t^{\epsilon}-u_t)-a^{\epsilon}(u,u_t^{\epsilon}-u_t)\dd t \\
			=&\,2\int_D A\Big( \nabla u(T)\cdot\left(\nabla u^{\epsilon}(T)-\nabla u(T)\right)-\nabla u(0,\cdot)\cdot\left(\nabla u^{\epsilon}(0)-\nabla u(0)\right)\\
			&-\int_0^T (\nabla u^{\epsilon}-\nabla u)\cdot\nabla u_t\dd t\Big)\dd \bm x-2\int_D A\Big( \nabla u(T)\cdot\left(\nabla u^{\epsilon}(T)-\nabla u(T)\right)\\
			&-\nabla u(0)\cdot\left(\nabla u^{\epsilon}(0)-\nabla u(0)\right)-\int_0^T (\nabla u^{\epsilon}-\nabla u)\cdot\nabla u_t\dd t\Big)\dd \bm x\\
			=&\,2\Big(\int_DA\nabla u(T)\cdot(\nabla u^{\epsilon}(T)-\nabla u(T))\dd\bm x-\int_{D_{\epsilon}} A\nabla u(T)\\
			&\cdot(\nabla u^{\epsilon}(T)-\nabla u(T))\omega_{\epsilon}\dd\bm x\Big) 
			+2\Big(\int_{D_{\epsilon}}A\nabla u(0)\cdot(\nabla u^{\epsilon}(0)-\nabla u(0))\omega_{\epsilon}\dd\bm x\\
			&-\int_D A\nabla u(0)\cdot(\nabla u^{\epsilon}(0)-\nabla u(0))\dd\bm x\Big) +2\Big(\int_0^T\Big(\int_{D_{\epsilon}} A(\nabla u^{\epsilon}-\nabla u)\\
			&\cdot\nabla u_t\omega_{\epsilon}\dd\bm x-\int_D A(\nabla u^{\epsilon}-\nabla u)\cdot\nabla u_t\dd\bm x\Big)\dd t\Big)\\
			=:&\,\rm{V}_1+\rm{V}_2+\rm{V}_3.
		\end{aligned}
	\end{equation}
	For $\rm{V}_1$, applying with Lemma \ref{lemma1}, we know that
	\begin{equation}
		\begin{aligned}
			\label{thm5-7}
			|\rm{V}_1|&\lesssim\epsilon^{\frac 1 2}\|\nabla u^{\epsilon}(T)-\nabla u(T)\|_{L^2(\Gamma_{\epsilon};\omega_{\epsilon})}\|A\nabla u(T)\|_{L^2(\Gamma_{\epsilon};\omega_{\epsilon})}\\
			&\leq \frac 1 4 \|\nabla u^{\epsilon}(T)-\nabla u(T)\|_{L^2(\Gamma_{\epsilon};\omega_{\epsilon})}^2+C\epsilon \|A\nabla u(T)\|_{L^2(\Gamma_{\epsilon};\omega_{\epsilon})}^2\\
			&\leq \frac 1 4 \|\nabla u^{\epsilon}(T)-\nabla u(T)\|_{L^2(\Gamma_{\epsilon};\omega_{\epsilon})}^2+C\epsilon^2,
		\end{aligned}
	\end{equation}
	Similar as the derivation of \eqref{thm5-7}, we have
	\begin{align}
		\label{thm5-8}
		|\rm{V}_2|&\lesssim \epsilon^\frac{1}{2}\|\nabla u^{\epsilon}(0)-\nabla u(0)\|_{L^2(\Gamma_{\epsilon};\omega_{\epsilon})}\|A\nabla u(0)\|_{L^2(\Gamma_{\epsilon};\omega_{\epsilon})}\lesssim \epsilon^2,
	\end{align}
	with proper initial guess for $u^{\epsilon}$. For the remaining term $\rm{V}_3$, applying with the variational formula and the property of $\omega_{\epsilon}$, we can rewrite $\rm{V}_3$ as
	\begin{equation}
		\begin{aligned}
			\label{thm5-9}
			\rm{V}_3=&\,2\left(\int_0^T\left(\int_{D_{\epsilon}} A(\nabla u^{\epsilon}-\nabla u)\cdot\nabla u_t\omega_{\epsilon}\dd\bm x-\int_D A(\nabla u^{\epsilon}-\nabla u)\cdot\nabla u_t\dd\bm x\right)\dd t\right)\\
			=&\,2\int_0^T\int_{\partial D_{\epsilon}}\bm n\cdot(A\nabla u_t)\omega_{\epsilon}(u^{\epsilon}-u)\dd \sigma-\int_{D_{\epsilon}}\divo(A\nabla u_t\omega_{\epsilon})(u^{\epsilon}-u)\dd\bm x\\
			&-\int_{\partial D}\bm n\cdot(A\nabla u_t)(u^{\epsilon}-u)\dd\sigma+\int_D \divo(A\nabla u_t)(u^{\epsilon}-u)\dd\bm x\dd t\\
			=&\,2\left(\int_0^T\int_D \divo(A\nabla u_t)(u^{\epsilon}-u)\dd\bm x-\int_{D_{\epsilon}}\divo(A\nabla u_t)(u^{\epsilon}-u)\omega_{\epsilon}\dd\bm x\dd t\right)\\
			&-2\left(\int_0^T\int_{D_{\epsilon}} A\nabla u_t\cdot\nabla \omega_{\epsilon}(u^{\epsilon}-u)\dd\bm x+\int_{\partial D}g_t(u^{\epsilon}-u)\dd\sigma\dd t\right)\\
			=:&\,\rm{W}_1-\rm{W}_2.
		\end{aligned}
	\end{equation}

	Following the similar derivation of \eqref{thm5-7}, we obtain
	\begin{equation}
		\begin{aligned}
			\label{thm5-10}
			|\rm{W}_1|&\lesssim \epsilon^{\frac 1 2}\|u^{\epsilon}-u\|_{L^2(\Gamma_{\epsilon};\omega_{\epsilon})}\|\divo(A\nabla u_t)\|_{L^2(\Gamma_{\epsilon};\omega_{\epsilon})}\lesssim 
			\epsilon^2.
		\end{aligned}
	\end{equation}
	The terms remained to be analyzed are $\rm{W}_2$ and $\rm{IV}_3$. Merging $\rm{W}_2$ and $\rm{IV}_3$ together, we have
	\begin{equation}
		\begin{aligned}
			\label{thm5-11}
			|-\rm{W}_2+\rm{IV}_3|\leq&\,2\left|\int_0^T\int_{D_{\epsilon}}f(u_t^{\epsilon}-u_t)\omega_{\epsilon}\dd\bm x-\int_D f(u_t^{\epsilon}-u_t)\dd\bm x\dd t\right|\\
			&+2\left|\int_0^T\int_{D_{\epsilon}} g(u_t^{\epsilon}-u_t)|\nabla\omega_{\epsilon}|\dd\bm x-\int_{\partial D} g(u_t^{\epsilon}-u_t)\dd\sigma\dd t\right.\\
			&\left.-\int_0^T\int_{D_{\epsilon}} A\nabla u_t\cdot\nabla \omega_{\epsilon}(u^{\epsilon}-u)\dd\bm x+\int_{\partial D}g_t(u^{\epsilon}-u)\dd\sigma\dd t\right|.
		\end{aligned}
	\end{equation}
	
	By using the Lemma \ref{lemma1}, we can easily derive that
	\begin{equation}
		\begin{aligned}
			\label{thm5-12}
			&\left|\int_0^T\int_{D_{\epsilon}}f(u_t^{\epsilon}-u_t)\omega_{\epsilon}\dd\bm x-\int_D f(u_t^{\epsilon}-u_t)\dd\bm x\dd t\right|\\
			&\qquad\lesssim\int_0^T \epsilon^{\frac 1 2}\|f\|_{L^2(\Gamma_{\epsilon};\omega_{\epsilon})}\|u_t^{\epsilon}-u_t\|_{L^2(\Gamma_{\epsilon};\omega_{\epsilon})}\dd t\lesssim \epsilon^3.
		\end{aligned}
	\end{equation}	
	By using the Lemma \ref{lemma4}, we can derive that
	\begin{equation}
		\begin{aligned}
			\label{thm5-13}
			&\left| \int_0^T\int_{D_{\epsilon}} g(u_t^{\epsilon}-u_t)|\nabla\omega_{\epsilon}|\dd\bm x\dd t-\int_0^T\int_{D_{\epsilon}}A\nabla u_t\cdot\nabla\omega_{\epsilon}(u^{\epsilon}-u)\dd\bm x\dd t\right.\\
			&\left.-\int_0^T\int_{\partial D}g(u_t^{\epsilon}-u_t)\dd\sigma\dd t
			-\int_0^T\int_{\partial D} g_t(u^{\epsilon}-u)\dd\sigma\dd t\right|\\
			&\qquad \leq C\epsilon^2+\frac{1}{4}\|\nabla u^{\epsilon}(T)-\nabla u(T)\|_{L^2(\Gamma_{\epsilon};\omega_{\epsilon})}^2+\frac 1 4 \|\nabla u^{\epsilon}(0)-\nabla u(0)\|_{L^2(\Gamma_{\epsilon};\omega_{\epsilon})}^2.
		\end{aligned}
	\end{equation}
	Combining \eqref{thm5-12} and \eqref{thm5-13} together, we can reformulate \eqref{thm5-11} as
	{\small\begin{equation}
			\begin{aligned}
				\label{thm5-14}
				|-\rm{W}_2+\rm{IV}_3|\leq&\, C\epsilon^2+\frac 1 4\|\nabla u^{\epsilon}(T)-\nabla u(T)\|_{L^2(\Gamma_{\epsilon};\omega_{\epsilon})}^2+\frac 1 4 \|\nabla u^{\epsilon}(0)-\nabla u(0)\|_{L^2(\Gamma_{\epsilon};\omega_{\epsilon})}^2.
			\end{aligned}
	\end{equation}}
	
	Therefore, inserting \eqref{thm5-5}-\eqref{thm5-14} to \eqref{thm5-3}, we can conclude that
	{\small	\begin{equation*}
			\begin{aligned}
				&\int_0^T 2\|u_t^{\epsilon}-u_t\|_{L^2(D_{\epsilon};\omega_{\epsilon})}^2\dd t+\|\nabla u^{\epsilon}(T)-\nabla u(T)\|_{L^2(D_{\epsilon};\omega_{\epsilon})}^2-\|\nabla u^{\epsilon}(0)-\nabla u(0)\|_{L^2(D_{\epsilon};\omega_{\epsilon})}^2\\
				&\qquad\leq C\epsilon^2+\frac 1 2\|\nabla u^{\epsilon}(T)-\nabla u(T)\|_{L^2(D_{\epsilon};\omega_{\epsilon})}^2+\frac 1 4\|\nabla u^{\epsilon}(0)-\nabla u(0)\|_{L^2(D_{\epsilon};\omega_{\epsilon})}^2,
			\end{aligned}
	\end{equation*}}
	which implies that
	\begin{equation*}
		\|\nabla u^{\epsilon}(T)-\nabla u(T)\|_{L^2(D_{\epsilon};\omega_{\epsilon})}^2\lesssim \epsilon^2 + \|\nabla u^{\epsilon}(0)-\nabla u(0)\|_{L^2(D_{\epsilon};\omega_{\epsilon})}^2.
	\end{equation*}
	Since $T$ can be replaced by any time $t\in[0,T]$ in the above derivations, we get the conclusion \eqref{thm5-1}.
	
\end{proof}

\section{Numerical experiments}\label{numerical}

In this section, we will present some numerical experiments to verify the error estimates (Theorems \ref{thm3}  and Theorem \ref{thm5} ) obtained in Section \ref{theory} and demonstrate the performance of the DDM. We apply the finite element method with the bilinear basis functions  for space discretization and  the BDF2 scheme for time stepping to solve the 
DDM-transformed problem \eqref{weighted_variational} defined on a larger rectangular domain, which is of second-order  in time, and first-order  in space with respect to the $H^1$ norm and second-order  in space with respect to the $L^2$ norm.
To demonstrate  the convergence order of the diffuse domain solution with respect to the interface width parameter $\epsilon$, i.e., $\left\|u(t)-u^{\epsilon}(t)\right\|_{L^2(D_{\epsilon};\omega_{\epsilon})}$ and $\left\|u(t)-u^{\epsilon}(t)\right\|_{H^1(D_{\epsilon};\omega_{\epsilon})}$, we will
take  $\epsilon$ to be much larger than the spatial mesh size and the time step size (so that the solution errors are mainly caused by the DDM approximation). For simplicity, we only evaluate the approximation error on the original domain $D$, i.e.,  $\left\|u(t)-u^{\epsilon}(t)\right\|_{L^2(D;\omega_{\epsilon})}$ and $\left\|u(t)-u^{\epsilon}(t)\right\|_{H^1(D;\omega_{\epsilon})}$. 
All tests are done using Matlab on a laptop with Intel Ultra 9 185H, 2.30GHz CPU and 32GB memory.

\subsection{The case of constant diffusion coefficient}


\begin{example}
	\label{ex1}
	In this example, we consider the following two-dimensional diffusion problem with a constant diffusion coefficient: for $0 \leq t \leq T$, 
	\begin{equation}
		\left\{\begin{aligned}
			&u_t=3\Delta u +f(t, x, y), \ &&(x,y)\in D\\
			&u(0,x,y) = \left(x^2+2x\right)\left(y^2-2y \right), \ \  &&(x, y) \in D,
		\end{aligned}
		\right.
	\end{equation}
	where
	\begin{equation*}
		\begin{split}
			f(t, x, y) =& -e^{-\pi^2 t}\left(\pi^2\left(x^2+2x\right)\left(y^2-2y\right)+6\left( x^2+2x\right)+6\left(y^2-2y\right) \right).
		\end{split}
	\end{equation*}
	The exact solution is given by $$u(t,x,y)=e^{-\pi^2 t}\left(x^2+2x \right)\left(y^2-2y \right).$$
	Two different domains $D$ are considered: one is a circular domain defined by
	\begin{equation}
		\label{circle}
		D=\left\{(x,y)\;\big|\; x^2+y^2=\frac{1}{16} \right\},
	\end{equation}
	and the other is a flower-shaped domain defined by
	\begin{equation}
		\label{flower}
		D=\left\{(x,y)\;\big|\; x^2+y^2-\left(0.175-0.03\sin\left(4\arctan\left(\frac y x\right)\right)\right)^2=0 \right\}.
	\end{equation}
	The Neumann boundary condition is imposed correspondingly, and 
	the terminal time is chosen as $T=0.5$. 
\end{example}

We run the diffuse domain method (DDM) on the extended rectangular domain $\Omega=[-1/2,1/2]\times[-1/2, 1/2]$ and $D\subset\Omega$. We take the time steps $N_T=512$ (i.e., $\Delta \tau = T/N_T=1/1024$ and the uniformly spatial mesh with  $h_x = h_y=1/512$. Then we set the interface thickness $\epsilon$ = 1/8, 1/16, 1/32, 1/64, respectively, so that the spatial mesh size and temporal step size are much finer than the interface thickness $\epsilon$. All numerical results are reported in Table \ref{tab 1}, including the solution errors measured in the weighted $L^2$ and $H^1$ norms and the corresponding convergence rates. We observe  roughly second-order convergence rates with respect to the weighted $L^2$ norm and first-order  convergence rates with respect to the weighted $H^1$ norm as expected, which coincide very well with the error estimates derived in Theorems \ref{thm3} and \ref{thm5}. Figure \ref{fig 1} presents simulated phase structures of the numerical solutions at the terminal time with the interface thickness $\epsilon=1/16,1/32,1/64$, respectively. We clearly observe that as the interface thickness decreases, the transition zone gradually becomes narrower and the shape of the approximated region increasingly resembles the exact region. The inner phase structure depends on the intensity of fluctuations in the exact solution $u$ and the phase-field function $\varphi$. Numerical errors tend to be more significant near the domain boundaries and in areas where the objective function changes rapidly, whereas in the interior of the domain and in regions where the function varies smoothly, numerical errors tends to be smaller. 

\begin{table}[!htbp]
	\centering
	\caption{Numerical results on the solution errors measured in the weighted $L^2$ and $H^1$ norms and corresponding convergence rates at the terminal time $T=0.5$ produced by the DDM in Example \ref{ex1}.}
	\begin{tabular}{|ccccc|}
		\hline
		$\epsilon$ & $\|u^{\epsilon}-u(t_n)\|_{L^2(D;\omega_{\epsilon})}$ & CR & $\|u^{\epsilon}-u(t_n)\|_{H^1(D;\omega_{\epsilon})}$ & CR\\
		\hline
		\multicolumn{5}{|c|}{Convergence tests for the circular domain}\\
		\hline
		1/8 &  1.0000e-03 & - & 1.0000e-03 & - \\
		1/16 & 2.6803e-04 & 1.90 & 2.7212e-04 & 1.88 \\
		1/32 & 6.8145e-05 & 1.98 & 7.6650e-05 & 1.83\\
		1/64 & 1.7663e-05 & 1.95 & 3.8208e-05 & 1.00 \\
		\hline
		\multicolumn{5}{|c|}{Convergence tests for the flower-shaped domain}\\
		\hline
		1/8 & 7.7428e-04 & - & 7.8951e-04 & - \\
		1/16 & 2.1302e-04 & 1.86 & 2.1599e-04 & 1.87 \\
		1/32 & 5.4320e-05 & 1.97 & 5.9805e-05 & 1.85 \\
		1/64 & 1.3876e-05 & 1.97 & 2.7572e-05 & 1.12 \\
		\hline
	\end{tabular}
	\label{tab 1}
\end{table}

\begin{figure}[!htbp]
	\centering
	\subfigure{
		\label{fig1-1}
		\centering
		\includegraphics[width = 110pt,height=100pt]{./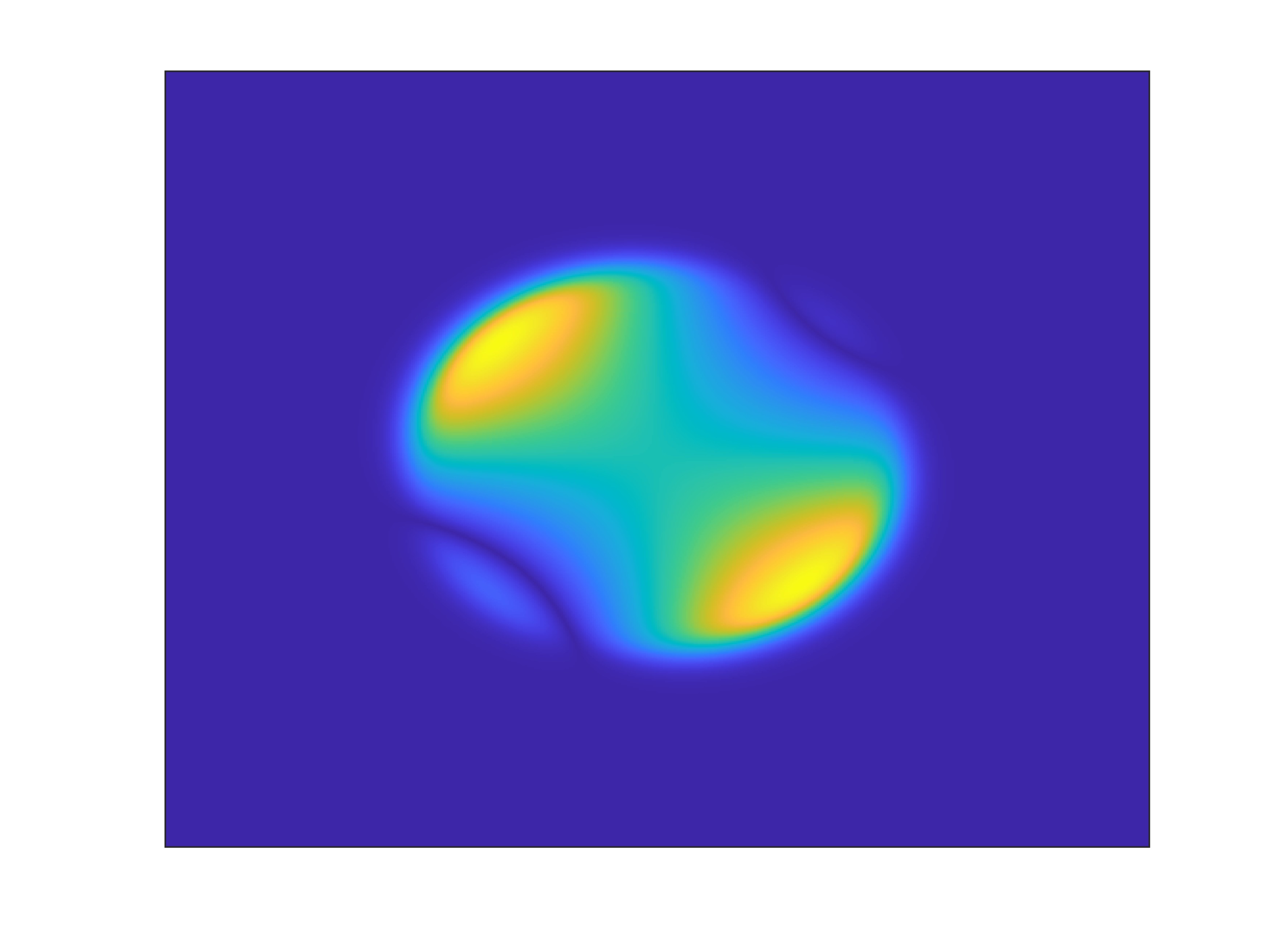}
	}\hspace{-0.6cm}
	\subfigure{
		\label{fig1-2}
		\centering
		\includegraphics[width = 110pt,height=100pt]{./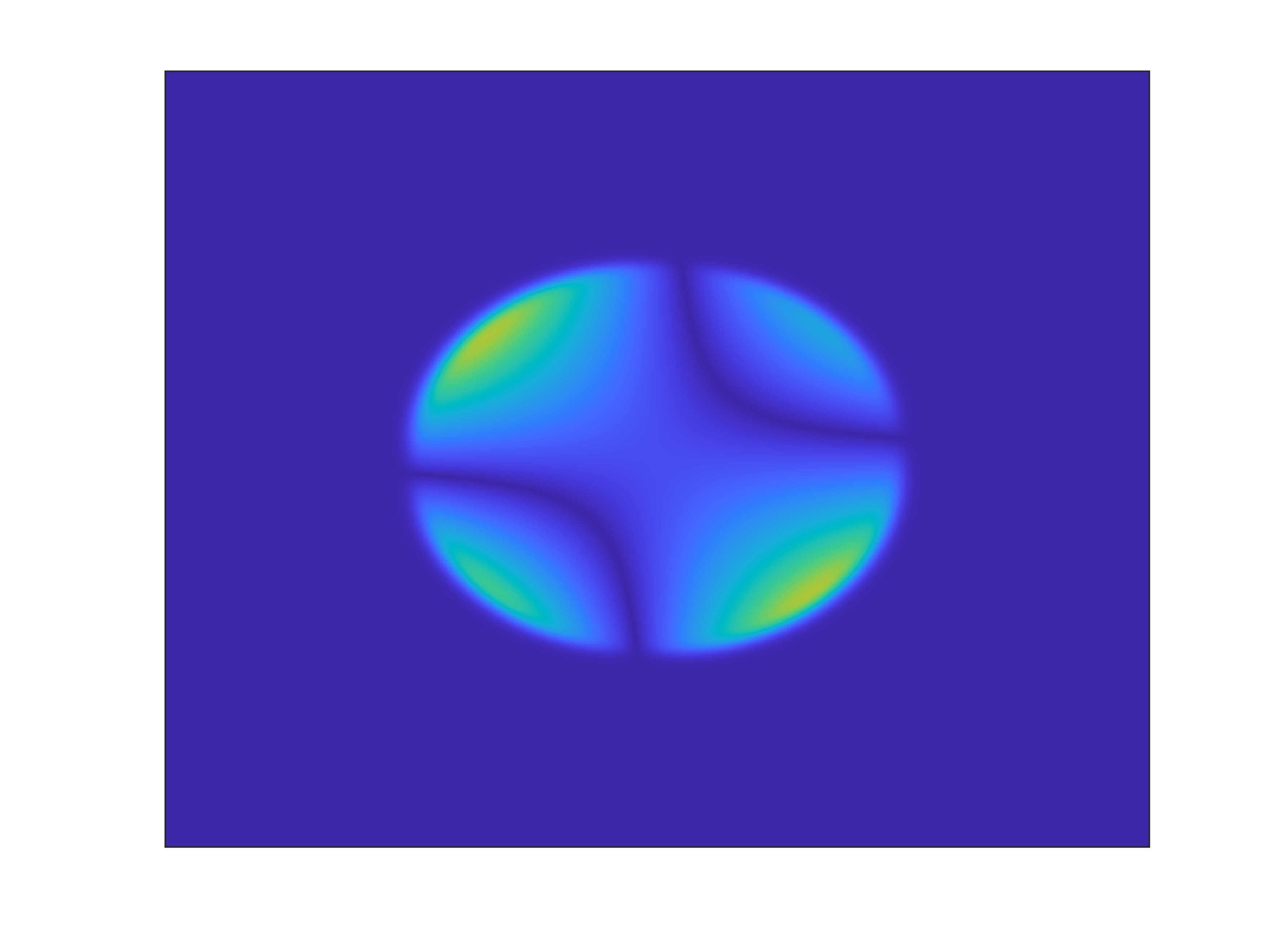}
	}\hspace{-0.6cm}
	\subfigure{
		\label{fig1-3}
		\centering
		\includegraphics[width = 120pt,height=100pt]{./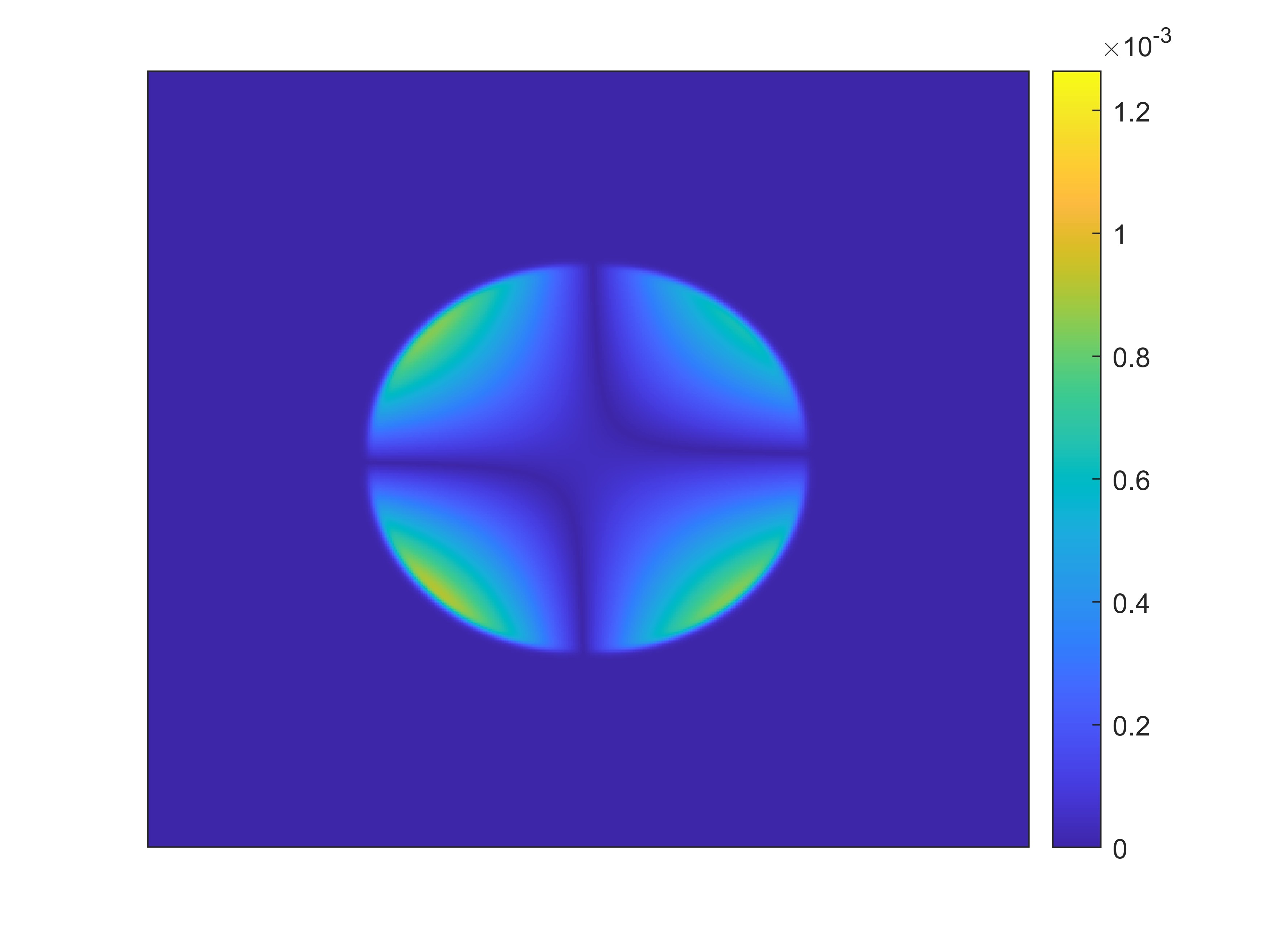}
	}
	
	\subfigure{
		\label{fig4-1}
		\centering
		\includegraphics[width = 110pt,height=100pt]{./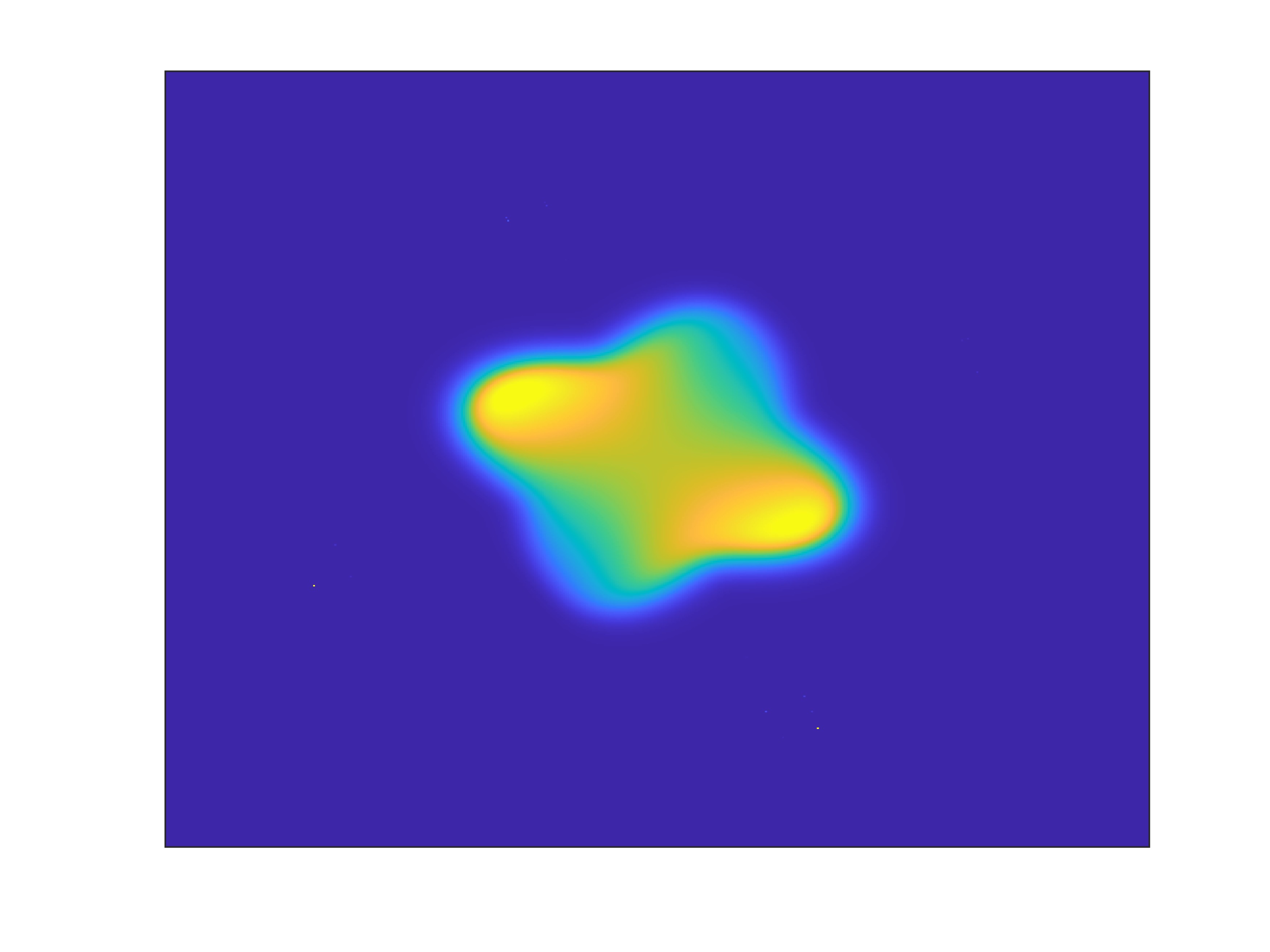}
	}\hspace{-0.6cm}
	\subfigure{
		\label{fig4-2}
		\centering
		\includegraphics[width = 110pt,height=100pt]{./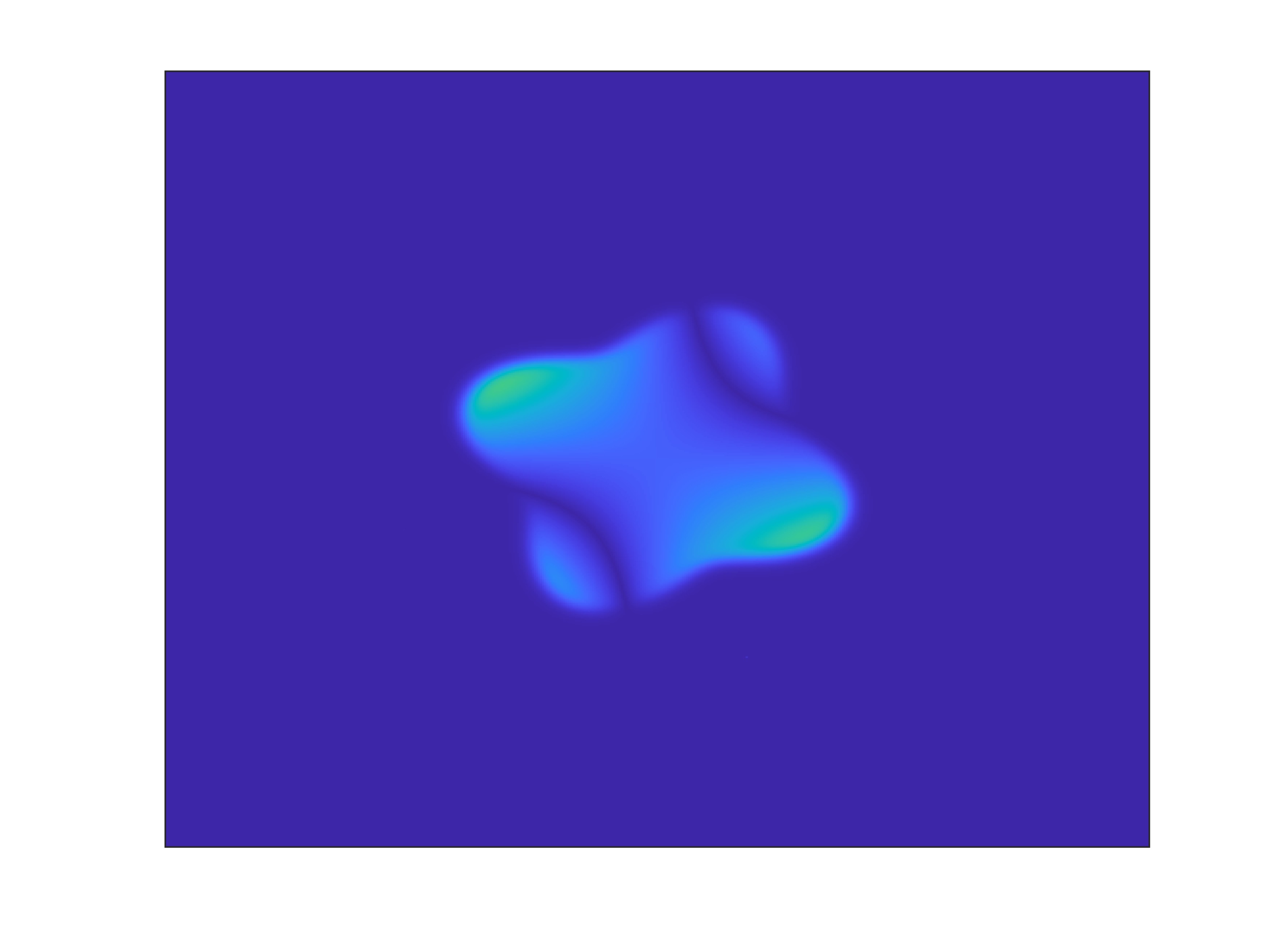}
	}\hspace{-0.6cm}
	\subfigure{
		\label{fig4-3}
		\centering
		\includegraphics[width = 120pt,height=100pt]{./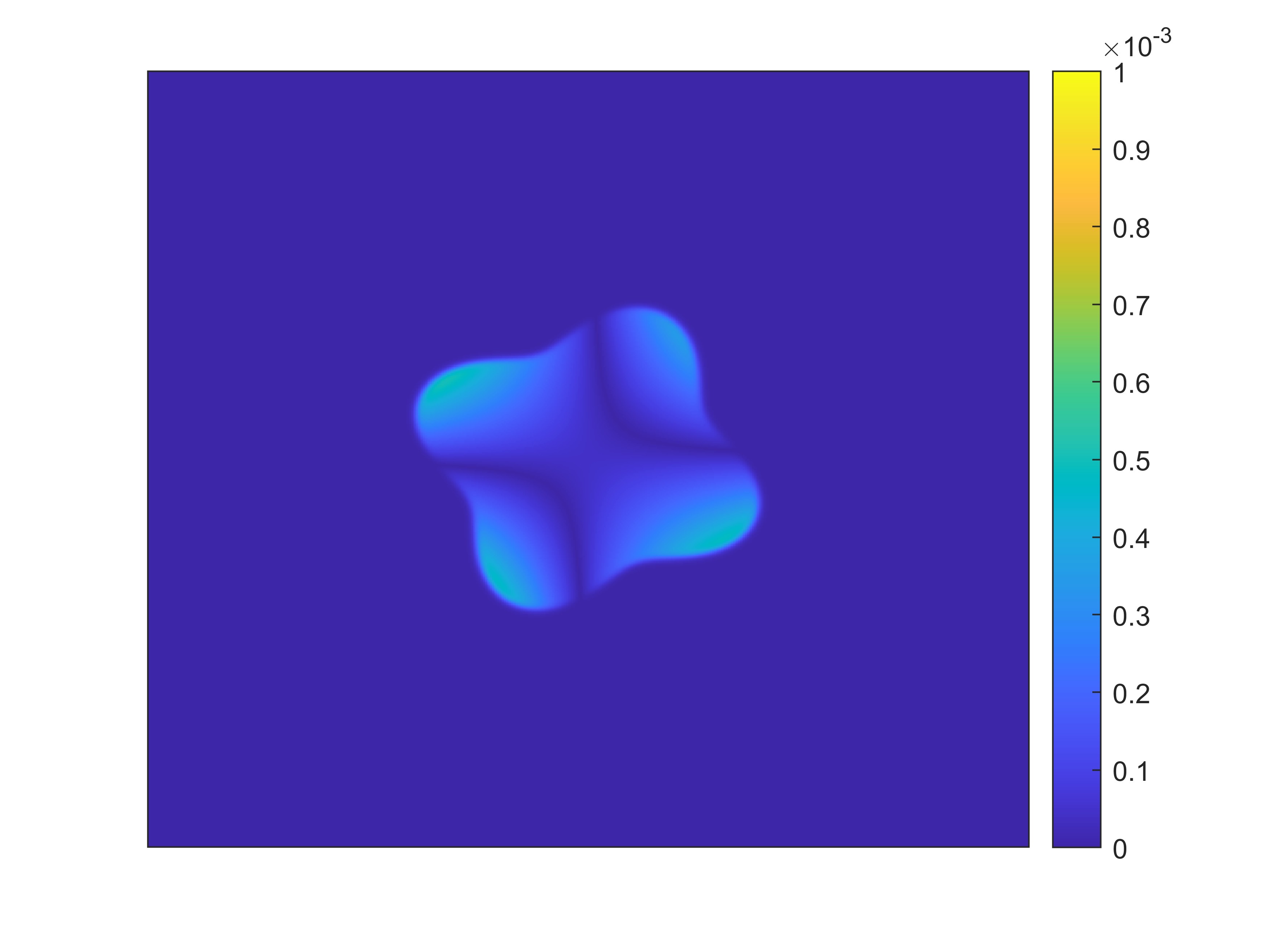}
	}
	\vspace{-0.5cm}
	\caption{The phase structures of the numerical solutions at the terminal time $T=0.5$ produced by the DDM approach with interface thickness $\epsilon=1/16,1/32,1/64$ (from left to right) for Example \ref{ex1}  in the circular domain (top row) and the flower-shaped domain (bottom row).}
	\label{fig 1}
\end{figure}

\subsection{The case of varying diffusion coefficient}


\begin{example}
	\label{ex2}
	In this example, we consider the following two-dimensional  diffusion problem  in the same circular domain and the flower-shaped domain used in Example \ref{ex1},  but with varying diffusion coefficient: for $0 \leq t \leq T$, 
	\begin{equation*}
		\left\{\begin{split}
			&u_t=\nabla \cdot \left((x^2+y^2+3)\nabla u\right)+f(t,x,y), \ &&(x,y)\in D\\
			&u(0,x,y) = \left(2 x^2-4x\right)\left(2 y^2-4y \right), \ \  &&(x, y) \in D,
		\end{split}
		\right.
	\end{equation*}
	where 
	\begin{equation*}
		\begin{split}
			f(t,x,y) =& -e^{-\pi^2 t}\Big(\pi^2\left(2 x^2-4x\right)\left(2 y^2-4y\right)+\left(4(x^2+y^2+3)+2x(4x-4)\right)\\
			&\cdot\left(2 y^2-4y \right)+\left(4(x^2+y^2+3)+2y(4y-4)\right)\left(2 x^2-4x \right)\Big),
		\end{split}
	\end{equation*}
	In this case, the exact solution is given by $$u(t,x,y)=e^{-\pi^2 t}\left(2 x^2-4x \right)\left(2 y^2-4y \right).$$
	Circular domain and flower-shaped domain defined in \eqref{circle} and \eqref{flower} are considered again. The Neumann boundary condition is correspondingly imposed and the terminal time is  set to $T=0.5$.
\end{example}

We verify the approximation accuracy of DDM by fixing $N_T=512$ (i.e., $\Delta \tau = T/N_T=1/1024$) and the extended rectangular domain $\Omega=[-1/2,1/2]\times[-1/2,1/2]$ ($D \subset \Omega$) with uniform spatial meshes, and the corresponding mesh size is $h_x=h_y=1/512$. To guarantee the spatial mesh sizes and temporal step size are much finer than the interface thickness $\epsilon$, we set $\epsilon=1/8,1/16,1/32, 1/64$ respectively for approximation accuracy tests. Table \ref{tab 2} shows the solution errors measured in the weighted $L^2$ and $H^1$ norms, including the corresponding convergence rates. From the Table \ref{tab 2}, we can conclude that the DDM method exhibits second-order convergence rate in the weighted $L^2$ norm, and first-order convergence rate in the weighted $H^1$ norm. The numerical results match very well with the error estimates derived in Theorem \ref{thm3} and \ref{thm5}. Fixing the same spatial meshes and temporal partitions, the simulated phase structures of the numerical solutions at the terminal time are shown in the Figure \ref{fig 2} with interface thickness $\epsilon=1/16,1/32,1/64$, respectively. It's observed again that with the interface thickness decreasing, the transition zone gradually becomes narrower and narrower, and the shape of the approximated region resembles the original irregular domain better and better. Similar as the Example \ref{ex1}, the phase structure of the numerical solution mainly depends on the intensity of fluctuations in the exact solution $u$ and the phase-field function $\varphi$. The numerical error tends to be more significant near the domain boundaries and in areas where the objective function changes rapidly, while that tends to be smaller in the interior of the domain due to the smoothness of the function. 

\begin{table}[!htbp]
	\centering
	\caption{Numerical results on the solution solutions measured in the weighted $L^2$ and $H^1$ norms and corresponding convergence rates at the terminal time $T=0.5$ produced by the DDM in Example \ref{ex2}.}
	\begin{tabular}{|ccccc|}
		\hline
		$\epsilon$ & $\|u^{\epsilon}-u(t_n)\|_{L^2(D;\omega_{\epsilon})}$ & CR & $\|u^{\epsilon}-u(t_n)\|_{H^1(D;\omega_{\epsilon})}$ & CR\\
		\hline
		\multicolumn{5}{|c|}{Approximation tests for circle domain}\\
		\hline
		1/8 &  4.4000e-03 & - & 4.4000e-03 & - \\
		1/16 & 1.1000e-03  & 2.00 & 1.2000e-03 & 1.87 \\
		1/32 & 2.8780e-04  & 1.93 & 3.2838e-04 & 1.87 \\
		1/64 & 7.5129e-05  & 1.94 & 1.6267e-04 & 1.01\\
		\hline
		\multicolumn{5}{|c|}{Approximation tests for flower-shaped domain}\\
		\hline
		1/8 & 3.3000e-03 & - & 3.4000e-03 & - \\
		1/16 & 8.6685e-04 & 1.93 & 8.8274e-04 & 1.95 \\
		1/32 & 2.2363e-04 & 1.95 & 2.4955e-04 & 1.82 \\
		1/64 & 5.7174e-05 & 1.97 & 1.1470e-04 & 1.12 \\
		\hline
	\end{tabular}
	\label{tab 2}
\end{table}

\begin{figure}[!htbp]
	\centering
	\subfigure{
		\label{fig2-1}
		\centering
		\includegraphics[width = 110pt,height=100pt]{./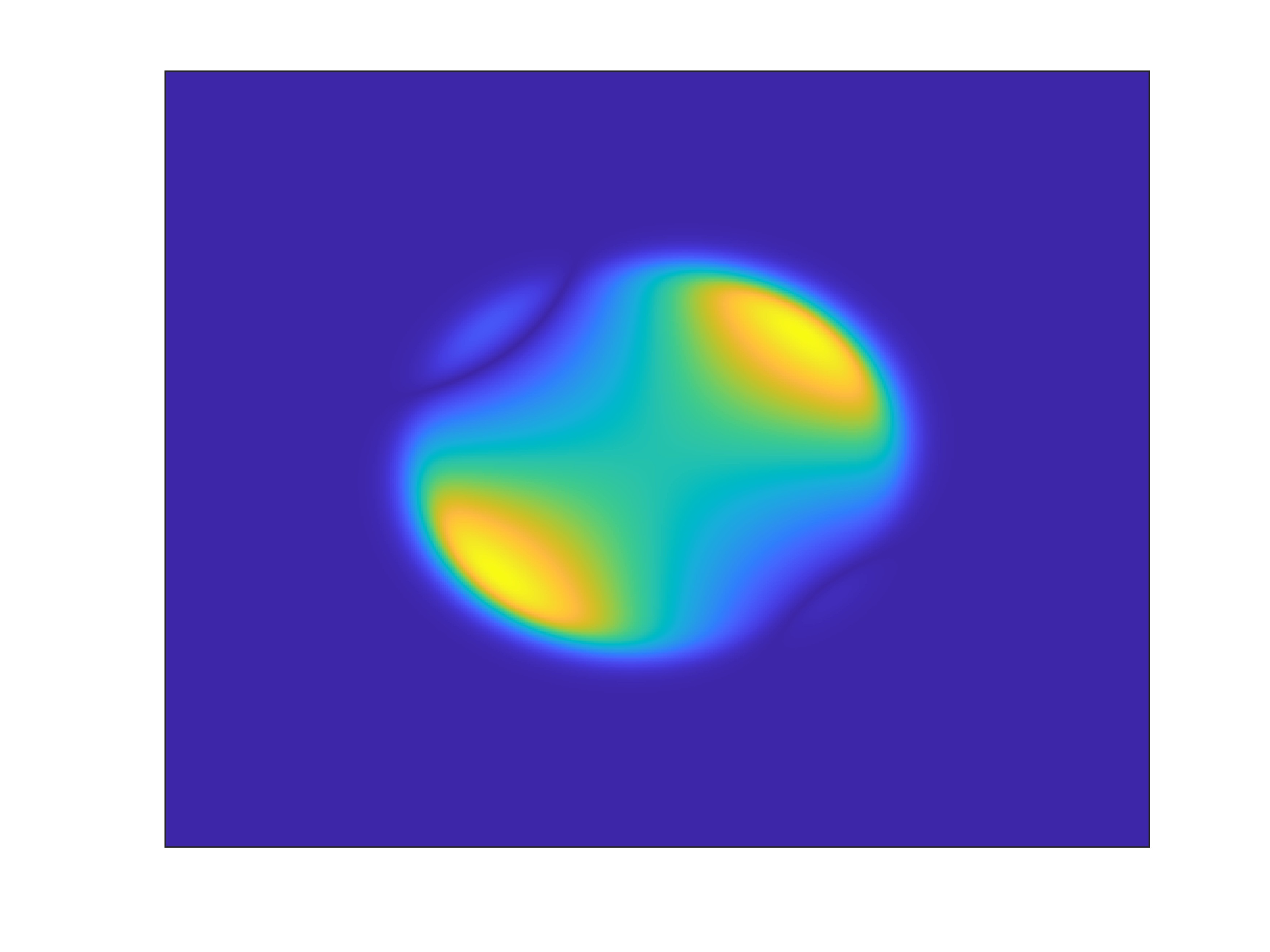}
	}\hspace{-0.6cm}
	\subfigure{
		\label{fig2-2}
		\centering
		\includegraphics[width = 110pt,height=100pt]{./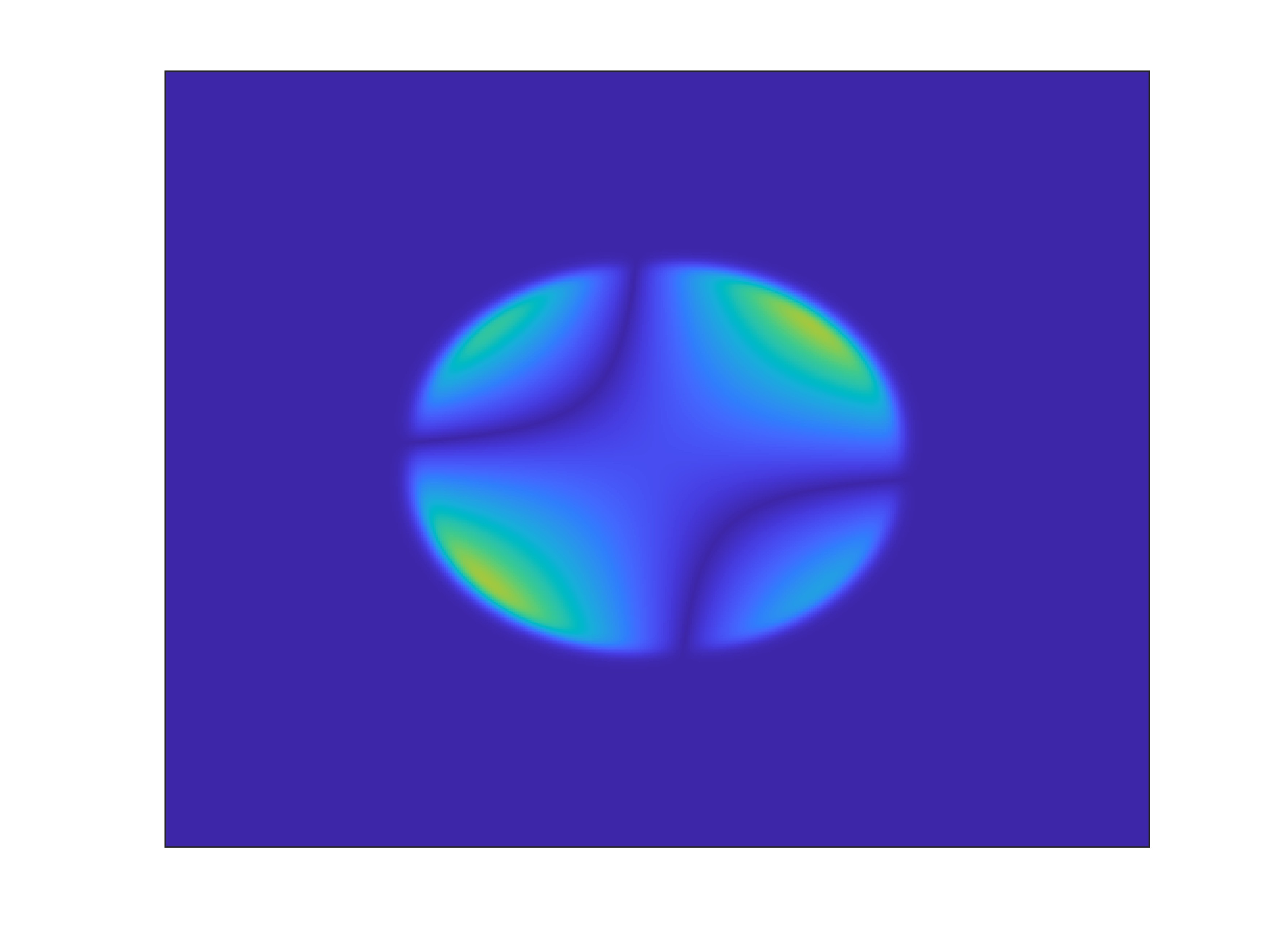}
	}\hspace{-0.6cm}
	\subfigure{
		\label{fig2-3}
		\centering
		\includegraphics[width = 120pt,height=100pt]{./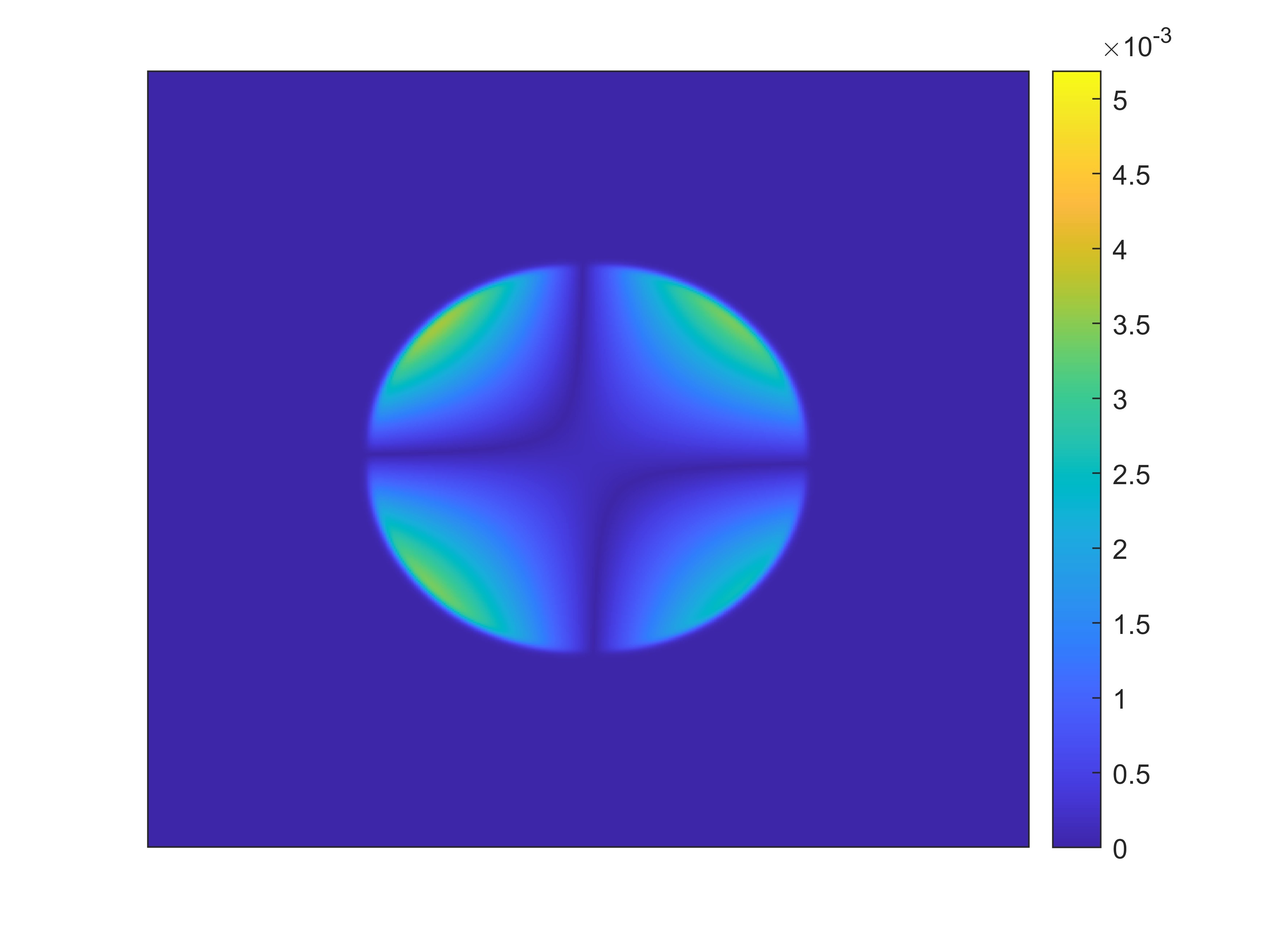}
	}
	
	\subfigure{
		\label{fig3-1}
		\centering
		\includegraphics[width = 110pt,height=100pt]{./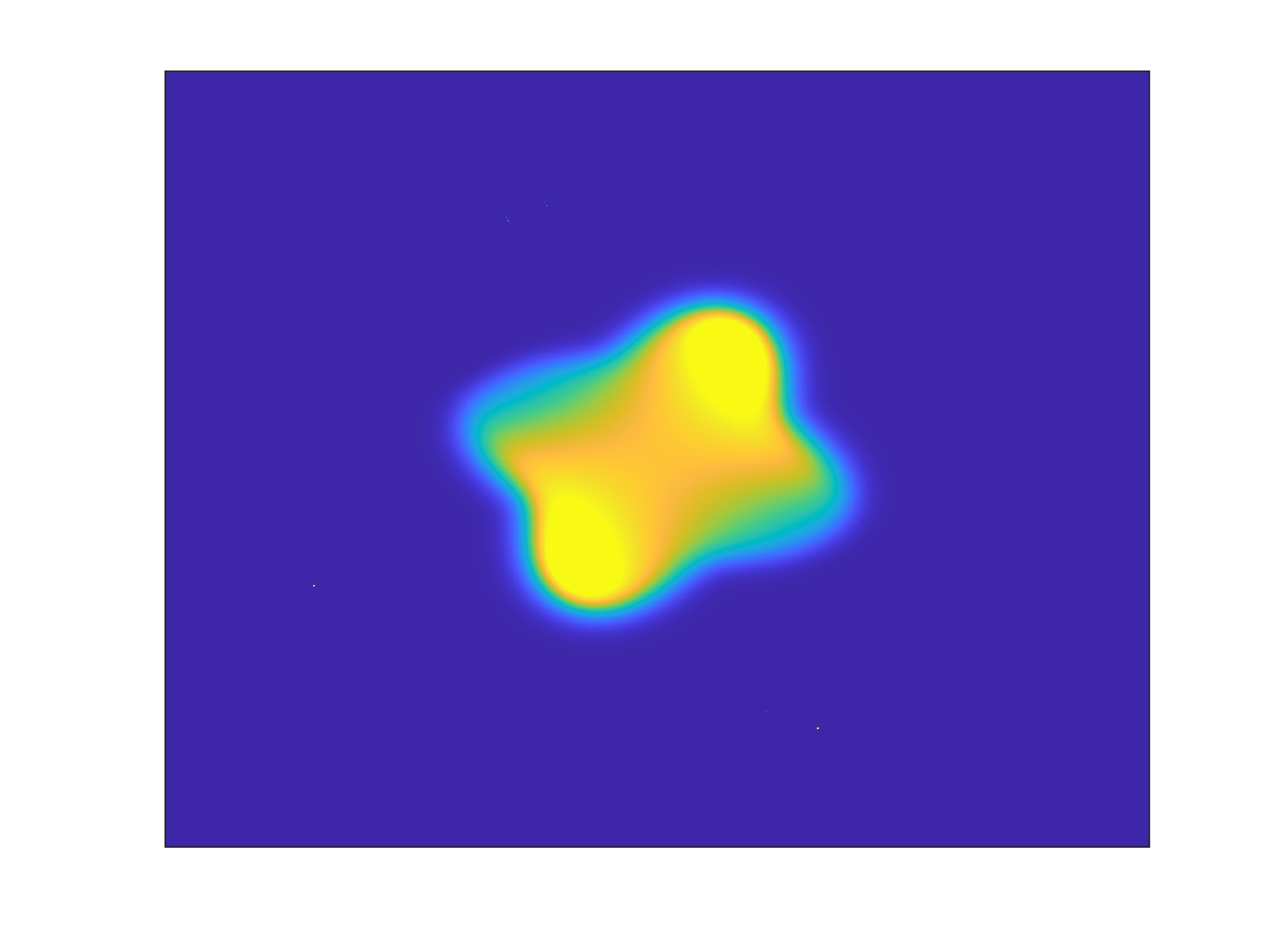}
	}\hspace{-0.6cm}
	\subfigure{
		\label{fig3-2}
		\centering
		\includegraphics[width = 110pt,height=100pt]{./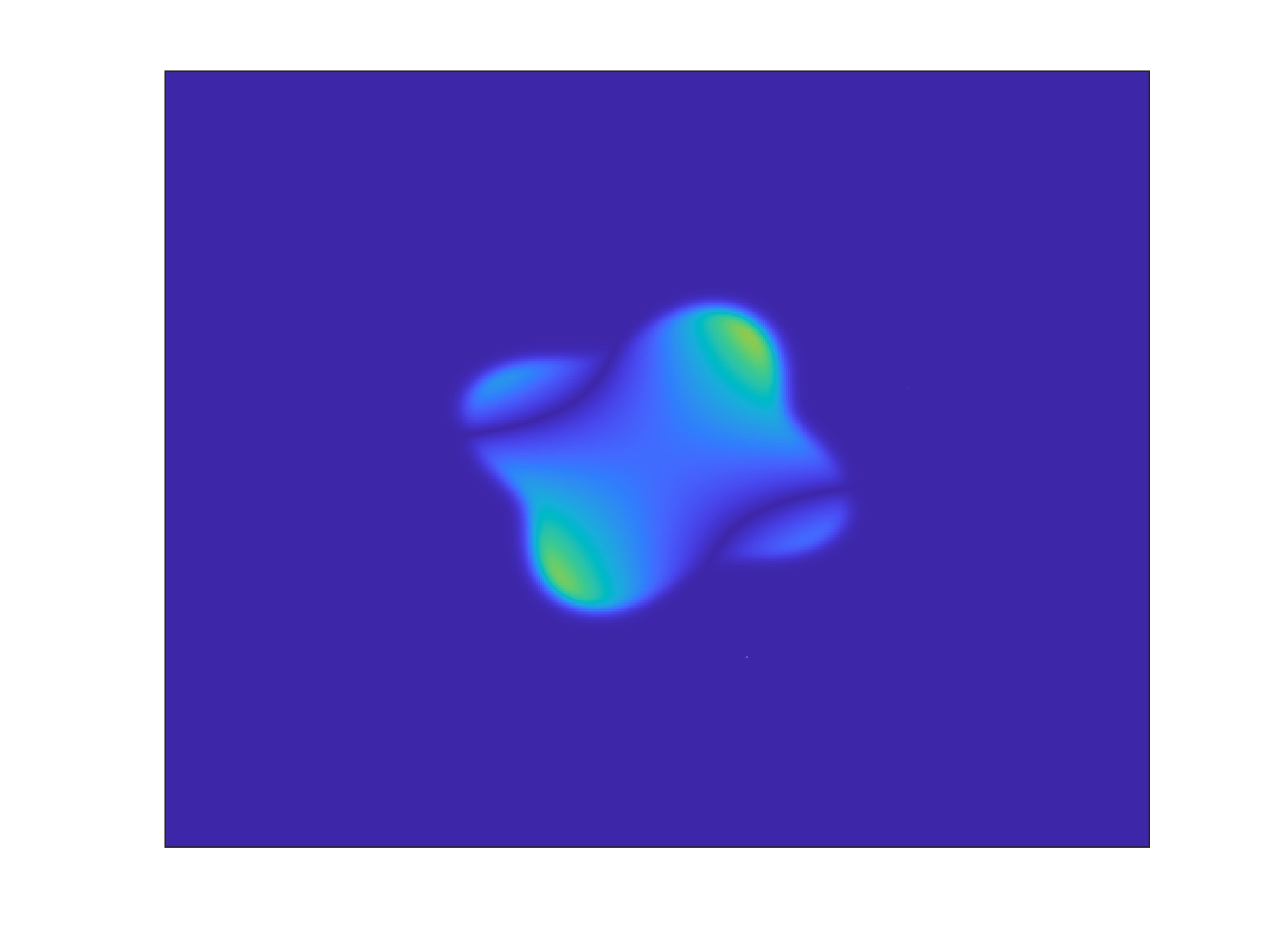}
	}\hspace{-0.6cm}
	\subfigure{
		\label{fig3-3}
		\centering
		\includegraphics[width = 120pt,height=100pt]{./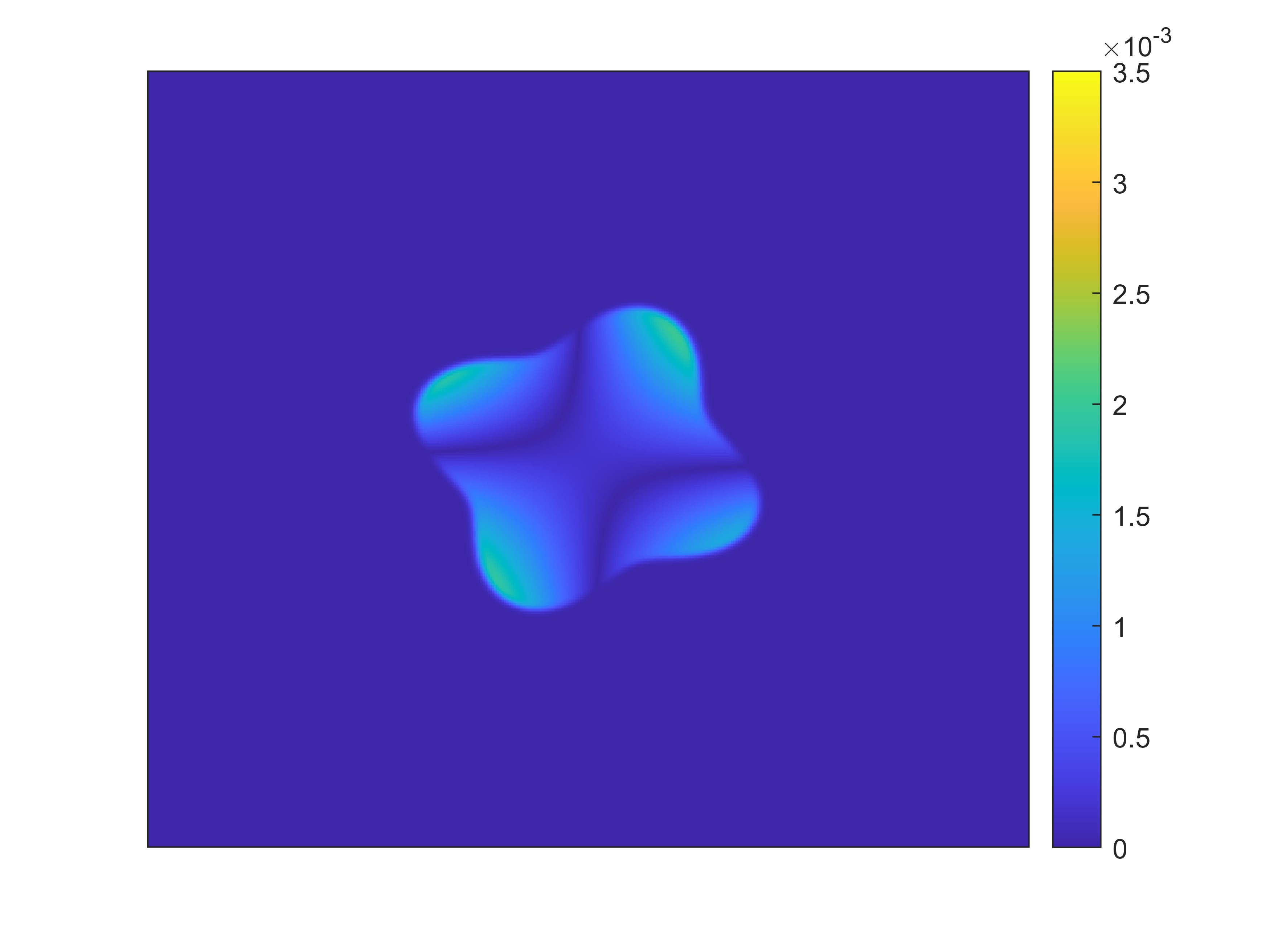}
	}
	\vspace{-0.5cm}
	\caption{The phase structures of the numerical errors at the terminal time $T=0.5$ produced by the DDM approach with interface thickness $\epsilon=1/16,1/32,1/64$ (from left to right) for Example \ref{ex2} in the circular domain (top row) and the flower-shaped domain (bottom row).}
	\label{fig 2}
\end{figure}

\section{Conclusions}\label{conclusion}
In this paper, we studied the convergence and error estimates  of the diffuse domain method for solving a class of second-order parabolic equations  with Neumann boundary conditions  in general irregular domains. We successfully proved optimal error estimates of the diffuse domain solutions in the weighted $L^2$ and $H^1$ norms. Some numerical examples are also presented to  verify  the derived theoretical results. The numerical method and corresponding error analysis framework developed in this paper also naturally enable us to further investigate adaptive finite element methods \cite{Ciarlet1978,BinevDahmen2004,Bieterman1982} for solving PDEs in regions with corner points with solid theoretical support.

\section*{Acknowledgements}

W. Hao is partially supported by NIH grant 1R35GM146894. L. Ju is partially supported by NSF grant DMS-2409634. Y. Xu is supported by Center of Computational Mathematics and Applications (CCMA) at Penn State University.

\section*{Declarations}

The authors have no competing interests to declare that are relevant to the content of this article.

\bibliographystyle{plainnat}
\bibliography{ref}

\end{document}